\let\emptyset\varnothing
\providecommand{\U}[1]{\protect\rule{.1in}{.1in}}
\theoremstyle{definition}
\newtheorem{set}{set}[section]
\newtheorem{theorem}[set]{Theorem}
\newtheorem{corollary}[set]{Corollary}
\newtheorem{definition}[set]{Definition}
\newtheorem{example}[set]{Example}
\newtheorem{lemma}[set]{Lemma}
\newtheorem{observation}[set]{Observation}
\newtheorem{proposition}[set]{Proposition}
\newtheorem{refer}[set]{Citation}
\newtheorem{question}[set]{Question}
\DeclareMathOperator{\II}{II}
\newcommand{\h}{\mathcal{H}}
\newcommand{\image}{\text{Im }}
\newcommand{\norm}[1]{\left\Vert#1\right\Vert}
\newcommand{\aut}{\text{Aut}}
\newcommand{\eb}[1]{\textcolor{blue}{#1}}
\begin{document}
\title[von Neumann Algebras of Thompson-like Groups II]{von Neumann Algebras of Thompson-like Groups from Cloning Systems II}

\keywords{Thompson-like groups, $d$-ary cloning systems, von Neumann Algebras, irreducible inclusion, singular inclusion, normalizers, weak asymptotic homomorphism property, weak mixing, mixing, McDuff factor, McDuff group, inner amenability, character rigidity, self-similar groups}

\author{Eli Bashwinger}
\address{Department of Mathematics and Statistics, University at Albany (SUNY), Albany, NY 12222}
\email{ebashwinger@albany.edu}

\maketitle

\begin{abstract}
Let $(G_n)_{n \in \mathbb{N}}$ be a sequence of groups equipped with a $d$-ary cloning system and denote by $\mathscr{T}_d(G_*)$ the resulting Thompson-like group. In previous work joint with Zaremsky, we obtained structural results concerning the group von Neumann algebra of $\mathscr{T}_d(G_*)$, denoted by $L(\mathscr{T}_d(G_*))$. Under some natural assumptions on the $d$-ary cloning system, we proved that $L(\mathscr{T}_d(G_*))$ is a type $\II_1$ factor. With a few additional natural assumptions, we proved that $L(\mathscr{T}_d(G_*))$ is, moreover, a McDuff factor. In this paper, we further analyze the structure of $L(\mathscr{T}_d(G_*))$, in particular the inclusion $L(F_d) \subseteq L(\mathscr{T}_d(G_*))$, where $F_d$ is the smallest of the Higman--Thompson groups. We prove that if the $d$-ary cloning system is ``diverse," then $L(F_d) \subseteq L(\mathscr{T}_d(G_*))$ satisfies the weak asymptotic homomorphism property. As a consequence, the inclusion is irreducible, which is a considerable improvement of our result that $L(\mathscr{T}_d(G_*))$ is a type $\II_1$ factor, and the inclusion is also singular. Then we look at examples of non-diverse $d$-ary cloning systems with respect to the weak asymptotic homomorphism property, singularity, and irreducibility. Then we finish the paper with some applications. We construct a machine which takes in an arbitrary group and finite group and produces an inclusion (both finite and infinite index) of type $\II_1$ factors which is singular but without the weak asymptotic homomorphism property. Finally,  using irreducibility of the inclusion $L(F_d) \subseteq L(\mathscr{T}_d(G_*))$, our conditions for when $L(\mathscr{T}_d(G_*))$ is a McDuff factor, and the fact that Higman-Thompson groups $F_d$ are character rigid (in the sense of Peterson), we prove that the groups $F_d$ are McDuff (in the sense of Deprez-Vaes).
\end{abstract}

\section*{Introduction}

In \cite{witzel18}, the concept of a cloning system was developed by Stefan Witzel and Matthew Zaremsky to furnish us with a systematic and principled way of constructing generalized Thompson groups based on the classical groups $F \le T \le V$ of Richard Thompson, who introduced them in his work on logic in the 1960s, in addition to allowing us to view the already existing generalizations in a more unified way (see, e.g., \cite{mckenzie73} and see \cite{cannon96} for an introduction to these groups, and also see the survey \cite{zaremsky18user} on cloning systems). The groups $F \le T \le V$ have many guises but one particularly nice viewpoint, which was the impetus for developing cloning systems, is to think of the elements of $V$ as certain equivalence classes of triples. These triples are of the form $(T_-, \sigma , T_+)$ where $T_-$ and $T_+$ are finite rooted binary trees with, say, $n$ leaves labelled left to right by $\{1,\dots,n\}$ and $\sigma$ is a permutation in $S_n$, thought of as permuting the leaf numbering. We denote the equivalence class of the triple $(T_-, \sigma , T_+)$ by $[T_-,\sigma, T_+]$, and the set of equivalence classes of these triples comes equipped with a certain natural binary operation, both of which we will explain later, whereby $V$ becomes a group. Elements of $T$ are those triples where the permutation $\sigma$ is in $\langle (1~2~\cdots ~n) \rangle$, the subgroup of $S_n$ generated by the $n$-cycle $(1~2~\cdots~n)$, and elements of $F$ are those triples where $\sigma$ is always the identity permutation.

Shortly after this paper, in \cite{skipper21} Skipper and Zaremsky generalized this to so-called $d$-ary cloning systems for $d \ge 2$  with $d$ representing the ``arity" of the trees in the triple (binary trees, ternery trees, etc.). The case where $d=2$ corresponds to the original cloning system construction developed by Witzel and Zaremsky, which one might regard as the ``classical" case. One reason cloning systems were expanded to the general $d$-ary case was to add the R{\"o}ver--Nekrashevych groups (see Section \ref{nekrashevych}), first fully introduced in \cite{nekrashevych04}, to the fold of Thompson-like groups arising from cloning systems. The $d$-ary cloning system construction produces generalizations of the Higman--Thompson groups $F_d \le T_d \le V_d$, which are themselves very natural generalizations of the classical Thompson's groups $F \le T \le V$.\footnote{We note that, technically, the groups $F_d$ and $T_d$ were not studied by Higman, and in fact first appeared in work of Brown \cite{brown87}, but to quote Brown, ``\textit{they are simply the obvious generalizations of Thompson's $F$ and $T$.}" Hence, we refer to $F_d$, $T_d$, and $V_d$ collectively as Higman--Thompson groups without fear of reproach. Also, the groups we denote here by $F_d$, $T_d$, and $V_d$ were denoted $F_{d,\infty}$, $T_{d,1}$, and $G_{d,1}$, respectively, in \cite{brown87}.} Indeed, elements of $V_d$ can be thought of as certain equivalence classes of triples $(T_-,\sigma, T_+)$ where in this case $T_-$ and $T_+$ are finite rooted $d$-ary trees and $\sigma$ is still a permutation, and $T_d$ and $F_d$ are defined in a similar fashion to the $d=2$ case. 

The Thompson-like group that results from a $d$-ary cloning system is constructed analogously to the way the Higman--Thompson groups $F_d \le T_d \le V_d$ are constructed. Given a sequence of groups $(G_n)_{n \in \mathbb{N}}$ where there are certain injective functions (not necessarily homomorphisms) between the groups $G_n \to G_{n+d-1}$ and an action of each $G_n$ on $\{1,\dots,n\}$ for every $n \in \mathbb{N}$, we can construct a Thompson-like group, denoted by $\mathscr{T}_d(G_*)$, which canonically contains the Higman--Thompson group $F_d$. In this setup, elements of $\mathscr{T}_d(G_*)$ are, as in the case of the Higman--Thompson groups, certain equivalence classes of triples with the permutation in the middle replaced by a group element from one of the groups $G_n$, which can still be regarded as permuting the leaf numbering since each group $G_n$ comes with an action on $\{1,\dots,n\}$ for every $n \in \mathbb{N}$. 

The Thompson-like group $\mathscr{T}_d(G_*)$ can be thought of as a Thompson-esque limit of the sequence $(G_n)_{n \in \mathbb{N}}$, but this construction ought to be contrasted with the usual direct or inductive limit construction. For example, the Thompson-esque limit preserves finiteness properties better than the direct limit construction. As a matter of fact, the finite symmetric groups $(S_n)_{n \in \mathbb{N}}$ are finitely presented but their usual injective direct limit yields $S_{\infty}$, the group of all finitary permutations of $\mathbb{N}$, which is not even finitely generated. On the other hand, the Thompson-esque limit of $(S_{n})_{n \in \mathbb{N}}$ is the Higman--Thompson group $V_d$ which is finitely presented -- as a matter of fact, it is of type $F_{\infty}$. It is also worth noting that $S_{\infty}$ is amenable while $V_d$ is not amenable, which is a more salient difference from an operator-algebraic perspective. 

As we mentioned above, many of the well-known generalizations fit within the framework of $d$-ary cloning systems (e.g., braided Higman--Thompson groups \cite{dehornoy06,brin07,aroca22}, the generalized Thompson groups of Tanushevski \cite{tanushevski16}, and R{\"o}ver--Nekrashevych groups just to name a few) and many new generalizations have been constructed using $d$-ary cloning systems. Besides simply producing new generalizations of the Higman--Thompson groups, they have since proved useful in a variety of contexts and connect to other areas of mathematics. For example, producing simple groups separated by finiteness properties \cite{skipper19}, inspecting inheritance properties of (bi-)orderability \cite{ishida18}, producing potential counterexamples to the conjecture that every $\text{co}\mathcal{CF}$ group embeds into Thompson's group $V$ \cite{berns-zieve18}, connections to Jones' technology used, for example, to produce certain actions and unitary representations of Thompson's groups, and now in the context of von Neumann algebras for producing intriguing new examples exhibiting a wide range of properties. We note that the connection to Jones' technology is especially intriguing, and we refer the interested reader to \cite{brothier19}, \cite{brothier20}, and \cite{brothier21} for more about Jones' technology and its connection to cloning systems.

In \cite{bashwinger}, we initiated the study of von Neumann algebras of Thompson-like groups arising from $d$-ary cloning systems, and in this paper we continue the analysis of these von Neumann algebras. In that paper, we obtained general structural results concerning the von Neumann algebras of Thompson-like groups which arise from $d$-ary cloning systems. We proved that the preponderance of these groups yield type $\II_1$ factors by virtue of having the infinite conjugacy class (ICC) property,  and even stronger than that, we showed that many of them even yield type $\II_1$ McDuff factors and hence these Thompson-like groups are inner amenable. More precisely, among many other results we proved the following:

\begin{refer}{\cite[Theorem~3.6]{bashwinger}}\label{factor}
Let $((G_n)_{n \in \mathbb{N}}, (\rho_n)_{n \in \mathbb{N}},(\kappa_{k}^{n})_{k \le n})$ be a fully compatible $d$-ary cloning system. If all the $G_n$ are ICC, then so is $\mathscr{T}_d(G_*)$. If the $G_n$ are not necessarily ICC but the $d$-ary cloning system is additionally diverse, then $\mathscr{T}_d(G_*)$ is ICC. 
\end{refer}

\begin{refer}{\cite[Theorem~5.10]{bashwinger}}\label{mcduff}
Let $((G_n)_{n \in \mathbb{N}}, (\rho_n)_{n \in \mathbb{N}},(\kappa_{k}^{n})_{k \le n})$ be a fully compatible, slightly pure, uniform $d$-ary cloning system. Assume that either all the $G_n$ are ICC, or that the $d$-ary cloning system is diverse (so in either case $\mathscr{T}_d(G_*)$ is ICC). Then $L(\mathscr{T}_d(G_*))$ is a McDuff factor and $\mathscr{T}_d(G_*)$ is inner amenable. 
\end{refer}

For the definition of fully compatible and slightly pure, see Section \ref{fullycompatible}; for the definition of diverse and uniform, see Section \ref{diverse}. Regarding Citation~\ref{factor}, fully compatible and diverse $d$-ary cloning systems encompass virtually all of the important examples and Citation \ref{factor} says that Thompson-like groups arising from fully compatible and diverse $d$-ary cloning systems have type $\II_1$ factor group von Neumann algebras. The most important examples of Thompson-like groups arising from $d$-ary cloning systems which are almost never fully compatible (and sometimes not even diverse) are the R{\"o}ver--Nekrasyvech groups (see Section \ref{nekrashevych}). But even the group von Neumann algebras of R{\"o}ver--Nekrasyvech groups turn out to be type $\II_1$ factors with a separate argument (see \cite[Proposition 4.7]{bashwinger}). 

As for Citation~\ref{mcduff}, although there are a lot of hypotheses needed in order to ensure that $L(\mathscr{T}_d(G_*))$ is a type $\II_1$ McDuff factor, it turns out these are very natural conditions and many examples of $d$-ary cloning systems satisfy these conditions, especially the ones which motivated the introduction of cloning systems. For example, as a consequence of this theorem, to our surprise we were able to deduce that the braided Higman--Thompson groups $bF_d$ yield $\II_1$ McDuff factors and hence are inner amenable; we were also able to deduce that the groups $\widehat{V}_d$ yield type $\II_1$ McDuff factors and hence are inner amenable (see \cite[Example 5.8]{bashwinger} for the precise definition of $\widehat{V}_d$). What is surprising about both of these results is that the $bF_d$ are replete with free subgroups, and the $\widehat{V}_d$ are quite similar to $V_d$ (indeed, $\widehat{V}_d$ and $V_d$ embed into each other) but the $V_d$ (as well as $T_d$) are known to be non-inner amenable by a result of the author and Zaremsky in \cite{bashwinger1}, which is an improvement and extension, but not generalization, of Haagerup and Olesen's result in \cite{haagerup17} that $T$ and $V$ are non-inner amenable. Using $d$-ary cloning systems, we also constructed a machine for producing type $\II_1$ McDuff factors from any arbitrary countable group $G$ and hence a machine producing inner amenable groups (see \cite[Example 5.12]{bashwinger}). Later in this paper (see Section \ref{singularnotWAHP}) we slightly modify this machine to construct another machine which takes in an arbitrary countable group and finite group and produces a singular inclusion of type $\II_1$ factors without the weak asymptotic homomorphism property, where the index of the inclusion can be taken to be either finite or infinite. We refer the reader to \cite{bashwinger} for many other intriguing examples of type $\II_1$ factors and type $\II_1$ McDuff factors arising from $d$-ary cloning systems.

The fact that there are Thompson-like groups which yield type $\II_1$ McDuff factors contrasts quite starkly with the groups usually studied in geometric group theory. For example, a large class of groups studied in geometric group theory are acylindrically hyperbolic, and Dahmani–Guirardel–Osin proved in \cite{dahmani17} that acylindrically hyperbolic ICC groups cannot be inner amenable and hence cannot yield type $\II_1$ McDuff factors. This entails that groups yielding type $\II_1$ McDuff factors are in a sense quite rare in geometric group theory. Hence, Thompson-like groups are rather peculiar in this regard, which further adds to their already existing curiosity.

In this paper, we prove, among many other things, a result which considerably strengthens Citation~\ref{factor}. We prove a structural result about how the Higman-Thompson group factor $L(F_d)$ sits inside Thompson-like group factors arising from $d$-ary cloning systems. To be more specific, in Section \ref{WAHP} we prove Theorem \ref{thrm:WAHP} which states that that if a sequence of groups $(G_n)_{n \in \mathbb{N}}$ is equipped with a diverse $d$-ary cloning system, then the inclusion $L(F_d) \subseteq L(\mathscr{T}_d(G_*))$ satisfies the weak asymptotic homomorphism property (see Definition \ref{def:WAHP}), and this will have a multitude of consequences. For example, this also proves that the inclusion $L(F_d) \subseteq L(\mathscr{T}_d(G_*))$ is singular and that it is irreducible (see Section \ref{WAHP}). Furthermore, the fact that the inclusion $L(F_d) \subseteq L(\mathscr{T}_d(G_*))$ is irreducible will itself have a number of consequences. The fact that the inclusion $L(F_d) \subseteq L(\mathscr{T}_d(G_*))$ is irreducible represents a considerable improvement of Citation \ref{factor}. This simultaneously strengthens Citation~\ref{factor}, as we can dispense with the fully compatible assumption and prove the stronger conclusion of irreducibility, and it further reveals the structure of $L(\mathscr{T}_d(G_*))$. The beauty of this result also lies in the fact that we no longer need to rely on results from \cite{preaux13} about when a group extension yields an ICC group, which were used to prove Citation~\ref{factor}. The proof of Theorem \ref{thrm:WAHP} is quite self-contained, albeit somewhat technical.

In addition to these consequences of the inclusion $L(F_d) \subseteq L(\mathscr{T}_d(G_*))$ satisfying the weak asymptotic homomorphism property, the other significance of this result is that it relates these examples to work of Popa. Somewhat more precisely, it relates it to Popa's intertwining-by-bimodules technique in the contemporary structure theory of $\II_{1}$ factors which is analogous to structure-randomness dichotomies of ergodic dynamical systems. In particular, the corner embeddability condition appearing in Popa's technique is formally similar to a weak mixing condition. In fact, singularity of maximal abelian subalgebras can alternatively be characterized by weak mixing conditions or the weak asymptotic homomorphism property. The situation is subtler for nonabelian subalgebras. In this more general setting, the weak asymptotic homomorphism property implies singularity, but the converse does not necessarily hold the other way around (see \cite{grossman10}). For more background on Popa's intertwining-by-bimodules technique, we refer the interested reader to \cite[Lemmas 4 and 5]{Popa04}, \cite[Theorem A.1]{popa06betti}, and \cite[Section 2]{popa06rigidity}

In Section \ref{nekrashevych}, we treat the R{\"o}ver--Nekrashevych groups separately with respect to irreducibility, singularity, and the weak asymptotic homomorphism property. The reason for treating the these groups separately is that, although every R{\"o}ver--Nekrashevych group arises from a $d$-ary cloning system, it is not always the case that these $d$-ary cloning systems are diverse. Hence, we cannot use any of our current theorems to conclude, for example, that $L(F_d)$ is an irreducible subfactor in the R{\"o}ver--Nekrashevych group factors. Interestingly, however, for these groups it turns out we can prove something stronger by treating them separately, namely, that $L([F_d,F_d])$ being an irreducible subfactor of the R{\"o}ver--Nekrashevych group factors. Of course, this entails that $L(F_d)$ is also an irreducible subfactor of the R{\"o}ver--Nekrashevych group factors. However, it is not always that case that the inclusion is singular and hence does not satisfies the weak asymptotic homomorphism property. Some of the R{\"o}ver--Nekrashevych groups do arise from diverse $d$-ary cloning systems, and obviously for these the inclusion satisfies the weak asymptotic homomorphism property, although we do not know how to completely characterize such $d$-ary cloning systems at the moment. In Section \ref{nekrashevych}, we provide a necessary condition for the $d$-ary cloning system to be diverse, but we do not know if it is sufficient.

Recall that for an inclusion of tracial von Neumann algebras $N \subseteq M$, satisfying the weak asymptotic homomorphism property is equivalent to $N$ being a weakly mixing von Neumann subalgebra (see Section \ref{normalizers}). Given how strong the diversity assumption is, one might naturally wonder whether we can prove that $L(F_d)$ has the stronger property of being a mixing subfactor of $L(\mathscr{T}_d(G_*))$ whenever $(G_n)_{n \in \mathbb{N}}$ is equipped with a diverse $d$-ary cloning system. In Section \ref{mixing}, we will see that mixing is almost never possible. Hence, for essentially all the most important examples, $L(F_d)$ will be a weakly mixing subfactor of $L(\mathscr{T}_d(G_*))$ which is not mixing. Although mixing of $L(F_d)$ cannot be entirely ruled out at the moment, we do not know how to construct a $d$-ary cloning system on a sequence of groups $(G_n)_{n \in \mathbb{N}}$ such that $L(F_d)$ is a mixing subfactor of $L(\mathscr{T}_d(G_*))$. 

In Section \ref{non-diverse}, we look at examples of Thompson-like groups which arise from non-diverse $d$-ary cloning systems and investigate whether or not $L(F_d) \subseteq L(\mathscr{T}_d(G_*))$ satisfies the weak asymptotic homomorphism property. We show that both are possible with non-diverse $d$-ary cloning systems. On a related note, we also wonder whether it is possible to construct a $d$-ary cloning
system (necessarily non-diverse) with resulting group $\mathscr{T}_d(G_*)$ such that $L(F_d) \subseteq L(\mathscr{T}_d(G_*))$ is singular inclusion of type $\II_1$ factors but does not satisfy the weak asymptotic homomorphism property. At the moment, however, this seems rather difficult. 

Despite the fact that we cannot (yet) construct a $d$-ary cloning system on a sequence of groups $(G_n)_{n \in \mathbb{N}}$ such that $L(F_d) \subseteq L(\mathscr{T}_d(G_*))$ is a singular inclusion of type $\II_1$ factors without the weak asymptotic homomorphism property, we can, however, construct inclusions of different Thompson-like groups which yield singular inclusion of type $\II_1$ factors without the weak asymptotic homomorphism property. As we mentioned above, in Section \ref{singularnotWAHP} we construct a machine using $d$-ary cloning systems and the amalgamated free product construction for groups which takes in an any finite group and any countable group and produces a singular inclusion of type $\II_1$ factor without the weak asymptotic homomorphism property with the inclusion being of either finite or infinite index. The examples Grossman and Wiggins constructed in \cite{grossman10} are finite index; in fact, they show more generally that proper finite index inclusions cannot satisfy the weak asymptotic homomorphism property. As far as we can tell, this left open the case of finding an infinite index, singular inclusion of type $\II_1$ factors without the weak asymptotic homomorphism property, which our construction provides. We note that this construction was done concurrently with the ones done in \cite{bannon23}, although ours is different in that it utilizes $d$-ary cloning systems and is purely group-theoretic. Hence, these examples are the first of their kind. 

Finally, in Section \ref{sec:higmanmcduff}, we finish the paper with an application. We use irreducibility of the inclusion $L(F_d) \subseteq L(\mathscr{T}_d(G_*))$, Citation~\ref{mcduff}, and the fact that the Higman--Thompson groups $F_d$ are character rigid in the sense of Peterson to show that the groups $F_d$ are McDuff in the sense of Deprez-Vaes for all $d \ge 2$. In proving the Higman--Thompson groups $F_d$ are McDuff, we will also see that irreduciblity and character rigidity can be used to prove that if a sequence $(G_n)_{n \in \mathbb{N}}$ of (non-trivial) abelian groups is equipped with a pure and diverse cloning system, then a certain canonical subgroup $\mathscr{K}_d(G_*)$ (defined in Section \ref{fullycompatible}) yields a Cartan subalgebra in $L(\mathscr{T}_d(G_*))$. What makes this result somewhat curious is that the Higman--Thompson group factors $L(F_d)$, $L(T_d)$, and $L(V_d)$ cannot contain a Cartan subalgebra arising from an abelian subgroup. Indeed, if $H \le G$ is an inclusion of countable groups, then $L(H)$ is a Cartan subalgebra of $L(G)$ if and only if $H$ is a normal abelian subgroup of $G$ such that $\{h^{-1} g h : h \in H \}$ is infinite for every $g \in G \setminus H$. Hence, in order for $L(H)$ to yield a Cartan subalgebra, minimally, the subgroup has to be normal and abelian. Any normal subgroup of $F_d$ necessarily contains $[F_d,F_d]$ and is therefore non-abelian. As for the Higman--Thompson group factors $L(T_d)$ and $L(V_d)$, the story is somewhat similar. When $d$ is even, both $T_d$ and $V_d$ are simple so, a fortori, they have no normal abelian subgroups; whereas when $d$ is odd, their respective commutator subgroups are simple and of index $2$ (i.e., $T_d$ and $V_d$ are virtually simple), and in this case any infinite, normal subgroup of either $T_d$ or $V_d$ must contain the commutator subgroup and hence be non-abelian. This shows that none of the Higman--Thompson groups contain a subgroup giving rise to a Cartan subalgebra, yet, as we shall see, it is rather easy to produce Thompson-like groups with a subgroup giving rise to a Cartan subalgebra.

\subsection*{Acknowledgements} The author is indebted to Jon Bannon and Matthew Zaremsky for helpful discussions: to Jon Bannon for his help on the general theory of von Neumann algebras and recommending various avenues to explore; and to Matthew Zaremsky for his input on some of the technical aspects of $d$-ary cloning systems and helping tidy up some of the proofs. 

\section{Brief Detour into von Neumann Algebras}\label{detour}

\subsection{Basic Theory and Constructions}\label{basics}

Although this section is intended to be a relatively self-contained treatment of von Neumann algebras, for a excellent general reference overview we refer to Popa's ICM survey (see \cite{popa07}), and for a more detailed introduction we refer to his book \cite{anan} with Anantharaman.

A von Neumann algebra is a $*$-subalgebra of bounded linear operators on some Hilbert space which is closed in the strong (equivalently, weak) operator topology. Arguably, the most well-studied and beloved von Neumann algebras arise from a group together with a unitary representation (e.g., the left-regular representation, the most natural unitary representation one can single out) or groups acting on measure spaces or other von Neumann algebras. When we have a group acting on a von Neumann algebra, we can form the so-called crossed product von Neumann algebra. Let us recall the basics of this construction. Throughout this paper, $G$ will almost exclusively denote a countable, discrete group. An \emph{action} of $G$ on a von Neumann algebra $M$, which we will assume acts on a separable Hilbert space $\h$, is a homomorphism $\sigma : G \to \aut (M)$ of $G$ to the group $\aut (M)$ of (normal) $*$-automorphisms of $M$. Consider the Hilbert space $\ell^2(G, \h)$ defined as follow: $$\ell^2(G,\h) := \{\psi : G \to \h : \sum_{g \in G} \norm{\psi(g)}_{\h}^2 < \infty \}$$ 
equipped with the inner product $$\langle \psi, \phi \rangle_{\ell^2(G,\mathcal{H})} := \sum_{g \in G} \langle \psi (g), \phi(g) \rangle_{\mathcal{H}}$$
where the inner product $\langle , \rangle_{\mathcal{H}}$ on the right-hand side comes from the Hilbert space $\h$, and $\norm{~}_{\h}$ denotes the norm on $\h$ induced by this inner product. Define $\lambda : G \to B(\ell^2(G,\h))$ and $\pi : M \to B(\ell^2(G,\h))$ by 
\begin{align*}
(\lambda (g) \psi )(h) &= \psi (g^{-1}h) \\
(\pi (x) \psi)(h) &= \sigma_{h^{-1}}(x) \psi (h)
\end{align*}
for $g,h \in G$, $x \in M$, and $\psi \in \ell^2(G,\h)$. One can easily verify that the former is a faithful unitary representation of $G$ while the latter is a faithful normal $*$-homomorphism, and that the commutation relation $$\lambda (g) \pi (x) \lambda (g)^* = \pi (\sigma_{g}(x))$$
must hold, where $x \in M$ and $g \in G$. Putting this together, the crossed product von Neumann algebra of $G$ acting on $M$, denoted by $M \rtimes_{\sigma} G$, is defined to be the von Neumann subalgebra of $B(\ell^2(G,\h))$ generated by $\pi (M)$ and $\pi (G)$; that is, form $M \rtimes_{\sigma} G$ by first forming the $*$-algebra generated by $\pi (M)$ and  $\lambda (G)$ and then taking the weak or stong operator topology closure. We note that the $*$-isomorphism class of $M \rtimes_{\sigma} G$ is independent of the choice of separable Hilbert space $\h$ on which we represent $M$. At times we may omit the action $\sigma$ from the notation when the context is clear and simply write $M \rtimes G$. 

If $M = \mathbb{C}$ with $G$ acting trivially, we obtain the so-called group von Neumann algebra of $G$, denoted as $L(G)$. If our group is given by a semi-direct product $N \rtimes_{\sigma} H$, then when we form the group von Neumann algebra of it, the semi-direct product translates into a crossed product von Neumann algebra. That is, $$L(N \rtimes_{\sigma} H) \cong L(N) \rtimes_{\widehat{\sigma}} H,$$
where $\widehat{\sigma}$ denotes the action of $H$ on $L(N)$ induced from the action $\sigma$ of $H$ on $N$. 

\subsection{Factors}\label{factors}

Factor von Neumann algebras, those von Neumann algebras whose center consists only of scalar multiples of the identity, are the simple objects among von Neumann algebras and are therefore expectedly important in the theory. Indeed, all von Neumann algebras can be decomposed as a direct integral of factors. The theory reduces even further to so-called type $\II_1$ factors, meaning they essentially represent the final frontier in the classification of von Neumann algebras. Factors of type $\II_1$ are infinite-dimensional factors which admit a normal, finite, faithful, normalized trace $\tau$. Hence, it is helpful to know when a general procedure for constructing von Neumann algebras, such as the crossed product construction, yields a type $\II_1$ factor.

For crossed product von Neumann algebras, we have some nice standard criteria for when the crossed product is a type $\II_1$ factor in terms of the way by which the group acts. Before we can state these criteria, though, let us recall two important ways a group can act on a von Neumann algebra. If $\sigma$ is an action of group $G$ on a von Neumann algebra $M$, then the action $\sigma$ of $G$ on $M$ is said to be \emph{ergodic} provided $M^{G} = \mathbb{C}1$, where $$M^{G} := \{x \in M : \sigma_{g}(x) = x ~ \forall g \in G\}$$ is the fixed-point subalgebra associated to the action. The action is said to be \emph{free} provided the automorphism $\sigma_g$ is properly outer for every non-trivial element $g \in G$, which means that if $x \in M$ with $xy = \sigma_g(y)x$ for all $y \in M$, it follows that $x=0$. With these definitions in mind, we have the following standard criteria from \cite{anan}:

\begin{refer}\label{crossedproduct}
Let $(M, \tau)$ be a tracial von Neumann algebra and $\sigma$ a trace-preserving action of a group $G$ on $M$. Then:
\begin{enumerate}
    \item $M' \cap (M \rtimes_\sigma G) = Z(M)$ if and only if the action is free.
    \item Assuming that the action is free, then $M \rtimes_\sigma G$ is a factor (and hence a type $\II_1$ factor) if and only if the action of $G$ on $Z(M)$ is ergodic. 
\end{enumerate}
If in addition $G$ is an ICC group, then:
\begin{enumerate}
    \item $L(G)' \cap (M \rtimes_{\sigma} G) = M^G$, and
    \item $M \rtimes G$ is a factor (and hence a type $\II_1$ factor) if and only if the $G$-action on $Z(M)$ is ergodic.
\end{enumerate}
\end{refer}

\subsection{Normalizers, Irreducibility, Singularity, and the Weak Asymptotic Homomorphism Property}\label{normalizers}
Given an inclusion of von Neumann algebras $N \subseteq M$, we can associate a certain group of unitaries in $M$ which stabilize the von Neumann subalgebra $N$ under the conjugation action: $$\mathcal{N}_M(N) := \{u \in \mathcal{U}(M) \mid u^*Nu = N \}$$
Naturally, this is called the \emph{normalizer} of $N$ in $M$, which contains $\mathcal{U}(N)$, the group of all unitary elements in $N$, as a normal subgroup. The normalizer of $N$ generates a von Neumann algebra, first by forming the $*$-algebra generated by the normalizer and then taking the weak (or strong) operator closure or the bicommutant, and the subalgebra $N$ can be classified in terms of the von Neumann algebra its normalizer generates. If the normalizer generates $N$, then $N$ is said to be \emph{singular}; if it generates a proper subalgebra of $M$ other than $N$, then $N$ is said to be \emph{semi-regular}; and, finally, if the normalizer generates $M$, then $N$ is said to be \emph{regular}. 

Now for some terminological conventions. If $H \le G$ is an inclusion of groups, then we say that $H$ is \emph{self-normalizing} in $G$ if the normalizer of $H$ in $G$ is ``trivial;" i.e., if $\mathcal{N}_{G}(H) = H$. Similarly, if $N \subseteq M$ is an inclusion of von Neumann algebras, then we say that $N$ is \emph{self-normalizing} $M$ if the normalizer of $N$ in $M$ is ``trivial;" i.e., if $\mathcal{N}_{M}(N) = \mathcal{U}(N)$.

In addition to looking at normalizers, we can look at one-sided normalizers. The \emph{one-sided normalizer} of $N$ in $M$ is the semigroup defined to be $$\mathcal{ON}_{M}(N) = \{u \in \mathcal{U}(M) : u^* N u \subseteq N \},$$
which contains the usual von Neumann algebra normalizer. Given an inclusion $H \le G$ of groups, we can similarly define the the normalizer and one-sided normalizer of $H$ in $G$ which as denoted as $\mathcal{N}_G(H)$ and $\mathcal{ON}_G(H)$, respectively. 

Let us now turn to the notion of irreducibility.

\begin{definition}[Irreducible]
Let $N \subseteq M$ be an inclusion of von Neumann algebras. We say that the inclusion is \emph{irreducible} if $N' \cap M = Z(N)$.
\end{definition}
Equivalently, $N \subseteq M$ is irreducible if and only if the conjugation action of the unitary group $\mathcal{U}(N)$ on $M$ is ergodic. For this reason, irreducible inclusions are also sometimes called ergodic inclusions or embeddings as in \cite{popa21}.
With this definition of irreduciblity in mind, we can immediately rephrase Citation~\ref{crossedproduct}. First, the inclusion $M \subseteq M \rtimes_\sigma G$ is irreducible if and only if the action is free. Second, if $G$ is an ICC group, then the inclusion $L(G) \subseteq M \rtimes_\sigma G$ is irreducible if and only if the action of $G$ on $M$ is ergodic. 

Irreducible inclusions of von Neumann algebras are nice for a variety of reasons, but for our purposes they are desirable for the following reasons. First, irreducible inclusions are nice because they allow us to ``upgrade" factoriality of a subfactor to the containing or ambient von Neumann algebra. As a matter of fact, it follows quite easily from this definition that if $P$ is an intermediate von Neumann algebra of $N \subseteq M$, then the inclusion $P \subseteq M$ is also irreducible, and, moreover, if $N$ is a factor, then $P$ must also be a factor; in particular, $M$ must be a factor. Second,  when the von Neumann algebras $N \subseteq M$ are group von Neumann algebras arising from an inclusion of groups, the von Neumann algebra normalizer admits a nice description in terms of the group normalizer, and determining whether the von Neumann subalgebra $N$ is regular, semi-regular, or singular is somewhat easier (see Citation \ref{SmithNormalizer}).

It turns out we have some nice group-theoretic characterizations for when the group von Neumann algebra is a factor (and hence a type $\II_1$ factor) and when a group inclusion gives rise to an irreducible inclusion of von Neumann algebras. For factoriality of $L(G)$, it is a standard, classical fact that $L(G)$ is a factor if and only if $G$ is ICC, which means that the conjugacy class of any non-trivial element is infinite. Clearly, the property of being ICC is equivalent to the property that the only element with finite index centralizer is the identity. 
For irreducibility of an inclusion of group von Neumann algebras, we have the following from \cite{smith09} which in a sense generalizes the ICC condition:

\begin{refer}{\cite[Lemma 6.1]{smith09}}\label{SmithIrred}
Let $H \le G$ be an inclusion of countable, discrete groups. Then $L(H)$ is irreducible in $L(G)$ if and only if each $g \in G \setminus \{1\}$ has infinitely many $H$-conjugates, meaning that the set $\{h^{-1}gh : h \in H\}$ is infinite.
\end{refer}

Note that a non-trivial element $g \in G$ having infinitely many $H$-conjugates is equivalent to $g$ having infinite index centralizer in $H$, and when $H=G$ we obtain the usual conditions for when $L(G)$ is a type $\II_1$ factor. 

From \cite{smith09}, we also have a theorem about computing the von Neumann algebra normalizer when we have an irreducible inclusion arising from a group inclusion. This says that the von Neumann algebra normalizer can be directly computed from the group normalizer. 

\begin{refer}{\cite[Theorem~6.2]{smith09}}\label{SmithNormalizer}
Let $H \le G$ be an inclusion of countable discrete groups with $G$ ICC such that the inclusion $L(H) \subseteq L(G)$ is irreducible. Then 
\begin{enumerate}
\item $\mathcal{ON}_{L(G)}(L(H)) = \{ u \lambda_g : u \in \mathcal{U}(L(G)) \text{ and } g \in \mathcal{ON}_G(H)\}$ \\
\item $\mathcal{N}_{L(G)}(L(H)) = \{u \lambda_{g} \mid u \in \mathcal{U}(L(H)) \text{ and } g \in \mathcal{N}_{G}(H)\}$
\end{enumerate}
\end{refer}
Note that if $\lambda : G \to \mathcal{U}(\ell^2(G))$ denotes the left-regular representation of $G$, defined on the canonical basis $\{\delta_x \}_{x \in G }$ of $\ell^2(G)$ as $\lambda(g) \delta_x := \delta_{gx}$ for $g,x \in G$, then $\lambda_g := \lambda(g)$ denotes the unitary element induced by $g \in G$. Roughly, this theorem states that the (one-sided) von Neumann algebra normalizers are made up of the (one-sided) group normalzers modulo a unitary from $L(H)$. Clearly the right-hand side is always contained in the left-hand side, but in the irreducible case we never have a strict inclusion but rather we always have equality. Note, for example, that if $H$ is self-normalizing in $G$, then the inclusion $L(H) \subseteq L(G)$ is singular.  

The last property we discuss in this section is the weak asymptotic homomorphism property. First, given a von Neumann algebra $M$ with a normal, finite, faithful, normalized trace $\tau$, we can form the $2$-norm on $M$ defined by $\norm{x}_2 = \sqrt{\tau(x^*x)}$ for $x \in M$. For simplicity, in the future we will just refer to $\tau$ as a trace. Also, as an aside, we mention that given this norm $\norm{~}_2$ on $M$, we can form the space $\ell^{\infty}(\mathbb{N},M)$ of all $\norm{~}_2$-bounded sequences, which will be useful when stating the McDuff property and property Gamma in Section~\ref{McDuffGammaInnerAmenable}. With this in mind, the weak asymptotic homomorphism property is defined as follows.

\begin{definition}[Weak Asymptotic Homomorphism Property]\label{def:WAHP}
Let $M$ be a von Neumann algebra with trace $\tau$ and let $N$ be a von Neumann subalgebra. Then the inclusion $N \subseteq M$ is said to satisfy the \emph{weak asymptotic homomorphism property} (WAHP) provided there exists a net of unitaries $\{u_i \}_{i \in I}$ in $N$ such that $$\lim_{i} \norm{E_{N}(x u_{i} y) - E_N(x) u_{i} E_{N}(y)}_2 = 0$$
for every $x,y \in M$, where $E_{N
} : M \to N$ denotes the unique, faithful, normal, trace preserving conditional expectation from $M$ onto $N$.  
\end{definition}
As we noted in the introduction, proper finite index inclusions preclude the WAHP, meaning that if $N \subseteq M$ is a proper finite index inclusion, then $N \subseteq M$ cannot satisfy the WAHP (see \cite{grossman10}). The WAHP can also be rephrased in terms of the one-sided quasi-normalizer of $N$ in $M$ being ``trivial," and using this reformulation we can deduce two other easy consequences. The following comes from \cite{fang11}.

\begin{refer}{\cite[Theorem 3.1]{fang11}}\label{WAHPquasi}
Let $M$ be a von Neumann algebra with trace $\tau$ and $N$ a von Neumann subalgebra. Then the following are equivalent:
\begin{enumerate}
    \item The inclusion $N \subseteq M$ satisfies the weak asymptotic homomorphism property.
    \item If $x, x_1,\dots,x_n \in M$ with $Nx \subseteq \sum_{i=1}^{n} x_i N$, then $x \in N$.
\end{enumerate}
\end{refer}

An element $x \in M$ for which there exist $x_1,\dots,x_n \in M$ with $Nx \subseteq \sum_{i=1}^{n} x_i N$ is called a one-sided quasi-normalizer of $N$, so the WAHP is equivalent saying that the one-sided quasi-normalizer of $N$ in $M$ is trivial. We let $\mathcal{QN}^{(1)}_{M}(N)$ denote the set of one-sided quasi-normalizers, and we have the following relationship between the various types of normalizers: $$\mathcal{N}_M(N) \subseteq \mathcal{ON}_M(N) \subseteq \mathcal{QN}_M^{(1)}(N).$$  Using this formulation of the WAHP, we note that it is quite easy to see that if $N \subseteq M$ is an inclusion of von Neumann algebras satisfying the WAHP, then the inclusion must also be singular. More than that, the one-sided 
normalizer of $N$ in $M$ must be trivial (i.e., $\mathcal{ON}_{M}(N) = \mathcal{U}(N)$). Furthermore, it is quite easy to argue that if the inclusion $N \subseteq M$ satisfies the WAHP, then the inclusion must also be irreducible. We note that Theorem~3.1 in \cite{fang11} is actually phrased in terms of the relative weak asymptotic homomorphism property, but Citation~\ref{WAHPquasi} is easily derived from it.  

As Jolissaint notes in \cite{Jolissaint12a}, the inclusion $N \subseteq M$ satisfies the WAHP if and only if the inclusion $N \subseteq M$ is weakly mixing, i.e., $N$ is a weakly mixing subalgebra of $M$ (see \cite[Theorem~1.4]{Jolissaint12a}). This equivalence is also true of the relative versions of the WAHP and weak mixing involving triples of von Neumann algebras, but we do not explore the relative versions in this paper. For more details on the relative WAHP and relative weak mixing, we refer the interested reader to \cite{Jolissaint12a}, \cite{jolissaint12b}, \cite{fang11}, and \cite{cameron13}.

In the case of an inclusion of von Neumann algebras arising from a group inclusion, condition (2) in Citation~\ref{WAHPquasi} translates to a condition about covering a right coset with left cosets. More precisely, we have the following from \cite{fang11}:

\begin{refer}{\cite[Corollary 5.4]{fang11}}\label{groupWAHP}
Let $H \le G$ be an inclusion groups. Then the following are equivalent:
\begin{enumerate}
    \item The inclusion $L(H) \subseteq L(G)$ satisfies the weak asymptotic homomorphism property
    \item If $g,g_1,\dots,g_n \in G$ with $Hg \subseteq \bigcup_{i=1}^{n} g_i H$, then $g \in H$.
\end{enumerate}
\end{refer}

Similar to what we said above, an element $g \in G$ for which there are $g_1, \dots , g_n \in G$ with $Hg \subseteq \bigcup_{i=1}^{n} g_iH$ is called a one-sided quasi-normalizer of $H$. Later in the paper, we may say that an inclusion $H \le G$ of countable discrete groups satisfies the right coset covering condition or that the inclusion has the right coset covering property if condition (2) is satisfy, meaning that the inclusion $L(H) \subseteq L(G)$ satisfies the WAHP if and only if the inclusion $H \le G$ satisfies the right coset covering condition or has the right coset covering property. 

To summarize this subsection, we note that for an inclusion of tracial von Neumann algebras $N \subseteq M$, we have the $N \subseteq M$ satisfying the WAHP implies that it is singular and that it is irreducible. 
In the course of this paper, we will see many examples where all the implications are strict. For an inclusion which is irreducible but does not satisfy the WAHP (or is even singular), see the R{\"o}ver--Nekrashevych groups in Section \ref{nekrashevych}, and for an inclusion which is singular but not does not satisfy the WAHP, see Section \ref{singularnotWAHP}.

\subsection{McDuff Factors, Property Gamma, and Inner Amenability}\label{McDuffGammaInnerAmenable}

Although we are not going to explore the three properties mentioned in the title of this section in relation to $d$-ary cloning systems in this paper as much as we did in \cite{bashwinger}, we do need them to state our results in that paper, particularly Citation~\ref{mcduff}, and then apply said results to prove that the Higman--Thompson groups $F_d$ are McDuff for all $d \ge 2$ in Section \ref{sec:higmanmcduff}. Let us now define the (relative) McDuff property.

\begin{definition}[(Relative) McDuff Property]
A type $\II_1$ factor $M$ is said to be \emph{McDuff} provided $M \cong M \otimes R$, where $R$ is the hyperfinite type $\II_1$ factor. A pair of type $\II_1$ factors $N \subseteq M$ has the \emph{relative McDuff property} provided that there is an isomorphism $M \to M \otimes R$ which restricts to an isomorphism $N \to N \otimes R$. 
\end{definition}

Recall that $R$ can be constructed by forming the group von Neumann algebra of $S_{\infty}$ (or any countable amenable ICC group for that matter), although it has other manifestations. It turns out that the McDuff property can be equivalently phrased in terms of central sequences.

\begin{definition}[Central Sequences]
Let $M$ be a type $\II_1$ factor. A sequence $(a_n)_{n \in \mathbb{N}} \in \ell^{\infty}(\mathbb{N},M)$ is said to be \emph{central} provided
$$\lim_{n \to \infty} \norm{x a_n- a_n x}_2 = 0$$
for all $x \in M$. Two central sequences $(a_n)_{n \in \mathbb{N}}, (b_n)_{n \in \mathbb{N}} \in \ell^{\infty}(\mathbb{N},M)$ are said to be \emph{equivalent} provided 
$$\lim_{n \to \infty} \norm{a_n - b_n}_2 = 0$$
Finally, a central sequence is \emph{trivial} provided it is equivalent to a scalar sequence. 
\end{definition}

Remarkably, the mere existence of a pair of non-trivial, inequivalent central sequences in a type $\II_1$ factor $M$ is equivalent to the existence of an isomorphism $M \to M \otimes R$, i.e., equivalent to $M$ being McDuff (see \cite{mcduff70}).  A related classical invariant of type $\II_1$ factors, which goes back to Murray and von Neumann, the founders of the theory of von Neumann algebras (see \cite{murray43}), is that of property Gamma.

\begin{definition}[Property Gamma]
A type $\II_1$ factor $M$ has \emph{property Gamma} provided there exists a non-trivial central sequence. 
\end{definition}

Phrasing the McDuff property in terms of central sequences, it is clear that the McDuff property entails property Gamma. For an ICC group $G$, a classical result of Effros says that if $L(G)$ has property Gamma, then $G$ is inner amenable (see \cite{effros75}). Recall that a group $G$ is \emph{inner amenable} if it admits a finitely additive, conjugation invariant probability measure on the set of subsets of $G \setminus \{e\}$. In summary, for a countable ICC group $G$ we have the following strict implications:
\[
G \text{ amenable} \Rightarrow L(G) \text{ McDuff} \Rightarrow L(G) \text{ has Property Gamma} \Rightarrow G \text{ inner amenable.}
\]
The first implication is easily seen to be strict: let $G$ be the direct product of any amenable ICC group with $\mathbb{F}_2$, the free group on two generators. As for the second implication being strict, the group von Neumann algebras of some of the Baumslag-Solitar groups have property Gamma (see \cite{Stalder06} and \cite{BannonMO}). However, Fima proved in \cite{Fima11} that they also yield prime factors and hence cannot be McDuff factors. Recall that a von Neumann algebra $M$ is said to be \emph{prime} if whenever $M$ can be decomposed into a tensor product of von Neumann algebras, one of the tensor product factors must be finite dimensional. Finally, Vaes constructed an example in \cite{vaes12} showing that the third implication is strict, an implication which was long thought to be an equivalence. 

\section{Introduction to $d$-ary Cloning Systems}\label{introduction}

\subsection{Basic Axioms and Properties of $d$-ary Cloning Systems}\label{axioms}

In this section, we recall the basics of the $d$-ary cloning system construction which produces Thompson-like groups. We also recall some important properties of $d$-ary cloning systems and point out some examples. We now recall the definition of a $d$-ary cloning system and its axioms.

\begin{definition}
Let $d \ge 2$ be an integer and $(G_n)_{n \in \mathbb{N}}$ a sequence of groups. For each $n \in \mathbb{N}$, let $\rho_n : G_n \to S_n$ be a group homomorphism from $G_n$ to the finite symmetric group $S_n$. For each $1 \le k \le n$, let $\kappa_{k}^{n} : G_n \to G_{n+d-1}$ be an injective function (not necessarily a homomorphism). We write $\rho_n$ to the left of its input and $\kappa_{k}^{n}$ to the right of its input. We call the triple
$$((G_n)_{n \in \mathbb{N}}, (\rho_n)_{n \in \mathbb{N}}, (\kappa_{k}^{n})_{k \le n})$$
a \emph{$d$-ary cloning system} if the following axioms hold:
\begin{enumerate}[(C1)]
    \item (Cloning a product) $(gh)\kappa_{k}^{n} = (g) \kappa_{\rho_n(h)k}^{n} (h) \kappa_{k}^{n}$ 
    \item (Product of clonings) $\kappa_{\ell}^{n} \circ \kappa_{k}^{n+d-1} = \kappa_{k}^{n} \circ \kappa_{\ell + d -1}^{n+d-1}$
    \item (Compatibility) $\rho_{n+d-1}((g)\kappa_{k}^{n})(i) = (\rho_{n}(g)) \varsigma_{k}^{n}(i)$ for all $i \neq k,k+1,\dots,k+d-1$.
\end{enumerate}
Here we always have $1 \le k < \ell \le n$ and $g,h \in G_n$, and the maps $\varsigma_{k}^{n} : S_{n} \to S_{n+d-1}$ denote the so-called standard $d$-ary cloning maps on the finite symmetric groups $(S_n)_{n \in \mathbb{N}}$, which we will momentarily explain. We call the group homomorphism $\rho_n : G_n \to S_n$ a \emph{representation map} for every $n \in \mathbb{N}$ and the injective function $\kappa_k^n : G_n \to G_{n+d-1}$ a \emph{$d$-ary cloning map} for all $1 \le k \le n$.
\end{definition}

Given a $d$-ary cloning system $((G_n)_{n \in \mathbb{N}}, (\rho_n)_{n \in \mathbb{N}}, (\kappa_{k}^{n})_{k \le n})$, we obtain a Thompson-like group, denoted as $\mathscr{T}_d(G_*)$. When $d=2$, we simply write $\mathscr{T}(G_*)$ for $\mathscr{T}_2(G_*)$. Sometimes we may say that the maps $((\rho_n)_{n \in \mathbb{N}}, (\kappa_{k}^{n})_{k \le n})$ ``form'' a $d$-ary cloning system on the sequence of groups $(G_n)_{n \in \mathbb{N}}$ or that the sequence of groups $(G_n)_{n \in \mathbb{N}}$ is ``equipped'' with the $d$-ary cloning system $((\rho_n)_{n \in \mathbb{N}}, (\kappa_k^n)_{k \le n})$ (as we already did in the introduction), which emphasizes the fact that a given sequence of groups may admit a variety of $d$-ary cloning systems, which we will see is certainly the case in Section \ref{fullycompatible}.

Before we recall the construction of the Thompson-like group $\mathscr{T}_d(G_*)$ arising from the $d$-ary cloning system $((G_n)_{n \in \mathbb{N}}, (\rho_n)_{n \in \mathbb{N}}, (\kappa_{k}^{n})_{k \le n})$, a few remarks are in order concerning the $d$-ary cloning system axioms. Although the $d$-ary cloning maps are not homomorphisms, axiom (C1) says that they are twisted homomorphisms with the twisting given by the representation maps $\rho_n$ (i.e., the action of each $G_n$ on $\{1,\dots,n\}$). As for axiom (C2), this axiom tells us that the $d$-ary cloning maps satisfy certain commutation relations which essentially mirror the relations in the infinite presentation of the Higman--Thompson group $F_d$: $$F_d = \langle x_0,x_1,x_2,\dots \mid x_{\ell} x_k = x_k x_{\ell + d-1}, k < \ell \rangle.$$ Finally, axiom (C3) essentially says that the $d$-ary cloning maps $(\kappa_k^n)_{k \le n}$ on $(G_n)_{n \in \mathbb{N}}$ have to be compatible with the standard $d$-ary cloning maps $(\varsigma_k^n)_{k \le n}$ on the symmetric groups $(S_n)_{n \in \mathbb{N}}$ in the sense that the two permutations $\rho_{n+d-1}((g)\kappa_k^n)$ and  $(\rho_n(g)) \varsigma_k^n$ have to agree almost everywhere on $\{1,\dots,n\}$ for every $g \in G_n$ and every $n \in \mathbb{N}$. It turns out that in many important and canonical examples (e.g.,  Higman--Thompson groups and their braided variants) these two permutations will actually agree everywhere (see Section \ref{fullycompatible}), but for the $d$-ary cloning system construction to produce a group with a well-defined binary operation it is not necessary that they agree everywhere. 

Let us now recall the construction of the group $\mathscr{T}_d(G_*)$. By a \emph{$d$-ary tree} we mean a finite rooted tree in which each non-leaf vertex has $d$ children, and a $d$-ary caret is a $d$-ary tree with $d$ leaves. A leaf is any vertex with no outgoing edges, which have to exist since we are considering finite trees. For the sake of convenience, we may sometimes say ``tree" instead of ``$d$-ary tree" when there is no risk of confusion. If $T$ is any $d$-ary tree, we let $n(T)$ denote the number of leaves of $T$. Given a $d$-ary tree $T$, let us call an expansion of $T$ the result of gluing the root of a $d$-ary caret to one of its $n(T)$ leaves, and then recursively extend the definition of extension to mean the result adding $d$-ary carets to its leaves an arbitrary but finite number of times. Recursively extending the notion of expansion allows us to essentially ``add" whole $d$-ary trees to any leaf of $T$ by gluing the root of the given $d$-ary tree to the desired leaf of $T$.

Let $((G_n)_{n \in \mathbb{N}}, (\rho_n)_{n \in \mathbb{N}}, (\kappa_k^n)_{k \le n})$ be a $d$-ary cloning system. Elements of $\mathscr{T}_d(G_*)$ are equivalence classes represented by a triple $(T,g,U)$, where $T$ and $U$ are $d$-ary trees with the same number of leaves, say $n \in \mathbb{N}$, and $g$ is an element of $G_n$. Let us now explain the equivalence relation which gives rise to such equivalence classes (i.e., elements of $\mathscr{T}_d(G_*)$). Given a triple $(T,g,U)$, an \emph{expansion} of it is a triple of the form $(T_{\rho_n(g)(k)},(g)\kappa_{k}^{n},U_k)$, where $U_k$ is obtained from $U$ by adding a $d$-ary caret to the $k$-th leaf and $T_{\rho_n(g)(k)}$ is obtained from $T$ by adding a $d$-ary caret to the $\rho_n(g)(k)$-th leaf. The equivalence relation on these triples is defined to be the symmetric and transitive closure of the expansions. Hence, roughly, two triples are equivalent if they differ by some finite sequence of expansions and reductions, where a reduction is simply a reverse expansion. We write $[T,g,U]$ for the equivalence class of $(T,g,U)$, which are the elements of $\mathscr{T}_d(G_*)$.

We have described what the elements of $\mathscr{T}_d(G_*)$ look like, but in order for this to form a group we must equip it with a binary operation. There is in fact a very natural binary operation on $\mathscr{T}_d(G_*)$ and here is how to define it. Given any two elements $[T,g,U]$ and $[V,h,W]$ in $\mathscr{T}_d(G_*)$, up to expansion we can assume $U = V$, because any pair (or even finite collection) of $d$-ary trees have a common expansion obtainable from either of them by adding the appropriate number of $d$-ary carets to the appropriate leaves (e.g., the  their ``union" is a common expansion). Now, multiplication of these two elements of $\mathscr{T}_d(G_*)$ is given by cancelling out the common tree and multiplying the group elements; i.e.,, $$[T,g,U][U,h,W] := [T,gh,W].$$ The $d$-ary cloning axioms guarantee that this is a well-defined group operation. The identity is $[T,1,T]$ for any $d$-ary tree $T$, and group inversion is given by $$[T,g,U]^{-1} = [U,g^{-1},T].$$

As we alluded to in the introduction, we said that the group $\mathscr{T}_d(G_*)$ can be regarded as a sort of Thompson-esque limit of the sequence of groups $(G_n)_{n \in \mathbb{N}}$, and as such it should contain copies of the groups $G_n$ in the limit, as is the case with most limit constructions. Indeed, given any $d$-ary tree $T$, we can define $$G_{T} := \{[T,g,T] : g \in G_{n(T)} \}$$ which is easily verified to be a subgroup of $\mathscr{T}_d(G_*)$. Then $g \mapsto [T,g,T]$ defines a group embedding of $G_{n(T)}$ into $\mathscr{T}_d(G_*)$ whose image is $G_{T}$.

If $((G_n)_{n \in \mathbb{N}}, (\rho_n)_{n \in \mathbb{N}}, (\kappa_k^n)_{k \le n})$ is any $d$-ary cloning system and $(H_n)_{n \in \mathbb{N}}$ is a sequence of subgroups such that the $d$-ary cloning system is ``compatible" with the sequence $(H_n)_{n \in \mathbb{N}}$, in the sense that $(H_n) \kappa_k^n \subseteq H_{n+d-1}$ for all $n \in \mathbb{N}$ and $1 \le k \le n$, then the $d$-ary cloning system on $(G_n)_{n \in \mathbb{N}}$ can be ``restricted" to form a $d$-ary cloning system on $(H_n)_{n \in \mathbb{N}}$. I.e., we obtain representation maps and $d$-ary cloning maps $\rho_n'$ and $(\kappa_k^n)'$, respectively, on $(H_n)_{n \in \mathbb{N}}$ by restricting $\rho_n$ and $\kappa_k^n$, respectively, and obtain a $d$-ary cloning system $((H_n)_{n \in \mathbb{N}}, (\rho_n')_{n \in \mathbb{N}}, ((\kappa_k^n)')_{k \le k})$ called a \textit{$d$-ary cloning subsystem} of $((G_n)_{n \in \mathbb{N}}, (\rho_n)_{n \in \mathbb{N}}, (\kappa_k^n)_{k \le n})$. The resulting Thompson-like group $\mathscr{T}_d(H_*)$ is a subgroup of $\mathscr{T}_d(G_*)$, which is ensured by the fact that the $H_n$ are stable under applying the $d$-ary cloning maps. 

Before we proceed any further, let us explain the standard $d$-ary cloning maps $\varsigma_k^n : S_{n} \to S_{n+d-1}$ on the sequence of finite symmetric groups referenced in the cloning axioms, and the Thompson-like group which results from them. Given $\sigma \in S_n$ a bijection of $\{1,\dots,n\}$, it can be represented as a diagram of arrows from one copy of $\{1,\dots,n\}$ up to a second copy with an arrow drawn from $i$ to $\sigma (i)$ for $i=1,\dots,n$. Then $(\sigma) \varsigma_k^n$, viewed in the same way as $\sigma$, is the diagram of arrows from $\{1,\dots,n, n+1,\dots,n+d-1\}$ up to a second copy of itself by taking the $k$-th arrow and replacing it by $d$ parallel arrows. See Figure~\ref{fig:symm_clone} for an example, and  for the rigorous, technical definition we refer to \cite[Example 2.2]{skipper21}.  Although somewhat tedious, it is not difficult to verify that $((S_n)_{n \in \mathbb{N}}, (\text{id}_{S_n})_{n \in \mathbb{N}}, (\varsigma_k^n)_{k \le n})$ forms a $d$-ary cloning system and that the resulting Thompson-like group is the Higman--Thompson group $V_d = \mathscr{T}_d(S_*)$. Since the subgroups $\langle (1~2~\cdots~n) \rangle \le S_n$ are stable under these $d$-ary cloning maps, we can restrict the standard $d$-ary cloning system on $(S_n)_{n \in \mathbb{N}}$ to $(\langle (1~2~\cdots~n) \rangle )_{n \in \mathbb{N}}$ with the resulting Thompson-like group being the Higman--Thompson group $T_d = \mathscr{T}_d(\langle (1~2~\cdots~*) \rangle)$. Finally when restricted to the trivial subgroup, the resulting Thompson-like group is the Higman--Thompson group $F_d = \mathscr{T}_d(\{1\})$.

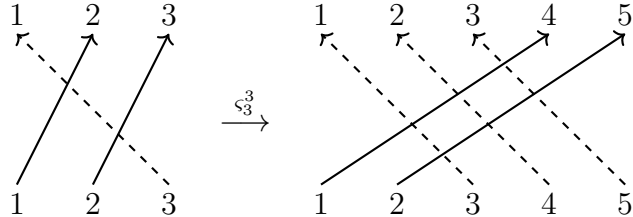
\begin{figure}[htb]
 \centering
 \begin{tikzpicture}[line width=0.8pt]
  \draw[->] (0,-2) -- (1,0); \draw[->] (1,-2) -- (2,0); \draw[->,dashed] (2,-2) -- (0,0);
  \node at (3,-1) {$\stackrel{\varsigma_3^3}{\longrightarrow}$};
  \node at (0,-2.25) {$1$};   \node at (0,0.25) {$1$};   \node at (1,-2.25) {$2$};   \node at (1,0.25) {$2$};   \node at (2,-2.25) {$3$};   \node at (2,0.25) {$3$};
  \begin{scope}[xshift=4cm]
   \draw[->] (0,-2) -- (3,0); \draw (1,-2)[->] -- (4,0); \draw (2,-2)[->,dashed] -- (0,0); \draw (3,-2)[->,dashed] -- (1,0); \draw (4,-2)[->,dashed] -- (2,0);
   \node at (0,-2.25) {$1$};   \node at (0,0.25) {$1$};   \node at (1,-2.25) {$2$};   \node at (1,0.25) {$2$};   \node at (2,-2.25) {$3$};   \node at (2,0.25) {$3$};   \node at (3,-2.25) {$4$};   \node at (3,0.25) {$4$};   \node at (4,-2.25) {$5$};   \node at (4,0.25) {$5$};
  \end{scope}
 \end{tikzpicture}
 \caption{An example of $3$-ary cloning in the symmetric groups. Here we see that $(1~2~3)\varsigma_3^3 = (1~4~2~5~3)$. The arrow getting ``cloned'' is dashed, as are the three arrows resulting from the cloning.}
 \label{fig:symm_clone}
\end{figure}

It is easy to deduce from the ``cloning a product" axiom that $(1)\kappa_k^n = 1$ for all $1 \le k \le n$ and all $n \in \mathbb{N}$, so any expansion of a triple of the form $(T,1,U)$ will be of the form $(T',1,U')$ for some $d$-ary trees $T'$ and $U'$. This entails that the set of all elements of the form $[T,1,U]$ defines a subgroup of $\mathscr{T}_d(G_*)$ for any $d$-ary cloning system $((G_n)_{n \in \mathbb{N}}, (\rho_n)_{n \in \mathbb{N}}, (\kappa_k^n)_{k \le n})$ and is in fact the canonical copy of $F_d$ inside the Thompson-like group $\mathscr{T}_d(G_*)$. Typically, we will write $[T,U]$ instead of $[T,1,U]$ out of convenience.

\subsection{Fully Compatible and (Slightly) Pure}\label{fullycompatible}

Although in the compatibility axiom (C2) the two permutations $\rho_{n+d-1}((g)\kappa_{k}^{n})$ and $(\rho_{n}(g)) \varsigma_{k}^{n}$ are not required to agree everywhere on $\{1,2,\dots,n\}$ for every $n \in \mathbb{N}$ and every $g \in G_n$, it turns out that in many natural examples (particularly, those which motivated the introduction of cloning systems) the two permutations do in fact agree everywhere, and when they do agree everywhere this leads to some nice properties. We note that there are some natural examples where these permutations do not equal each other for every $g \in G_n$ and every $n \in \mathbb{N}$. For example, the $d$-ary cloning systems giving rise to the R{\"o}ver--Nekrashevych groups are almost never fully compatible. However, since there are so many examples where the two permutations agree everywhere, let us encode this phenomenon into the following definition.

\begin{definition}\label{def:fullycompatible}
Call a $d$-ary cloning system $((G_n)_{n \in \mathbb{N}}, (\rho_n)_{n \in \mathbb{N}}, (\kappa_k^n)_{k \le n})$ \emph{fully compatible} if $\rho_{n+d-1}((g)\kappa_k^n)(i)=(\rho_n(g))\varsigma_k^n(i)$ for all $1 \le k \le n$ and all $1 \le i \le n+d-1$ even when $i=k,k+1,\dots,k+d-1$, or, more simply, we have the commutation relation $\varsigma_k^n \circ 
 \rho_n = \rho_{n+d-1} \circ \kappa_k^n$ of the representation maps (the action) and the $d$-ary cloning maps for all $1 \le k \le n$ and $n \in \mathbb{N}$.
\end{definition}

Trivially, the standard $d$-ary cloning system on $(S_n)_{n \in \mathbb{N}}$ giving rise to $V_d$ is fully compatible. For another example, the standard $d$-ary cloning system on the braid groups $(B_n)_{n \in \mathbb{N}}$ is fully compatible, and this $d$-ary cloning system gives rise to the braided Higman--Thompson group $bV_d := \mathscr{T}_d(B_*)$. Let us explain the standard $d$-ary cloning system on the braid groups. First, given a braid $b \in B_n$, we can number the bottom and top of the strands from left to right with $\{1,2,\dots,n\}$. The representation maps $B_n \to S_n$ are the standard projections tracking the number where a given strand of $b$ starts and ends. As for the $d$-ary cloning maps on $(B_n)_{n \in \mathbb{N}}$, $\kappa_k^n : B_n \to B_{n+d-1}$ sends $b \in B_n$ to the braid $(b) \kappa_k^n$ of $B_{n+d-1}$ obtained by replacing the $k$-th strand with $d$ parallel strands having the same crossing type. As we stated, this forms a fully compatible $d$-ary cloning system on braid groups $(B_n)_{n \in \mathbb{N}}$. See Figure~\ref{fig:braid} for an example of an expansion of a triple representing an element in $bV = bV_2$.

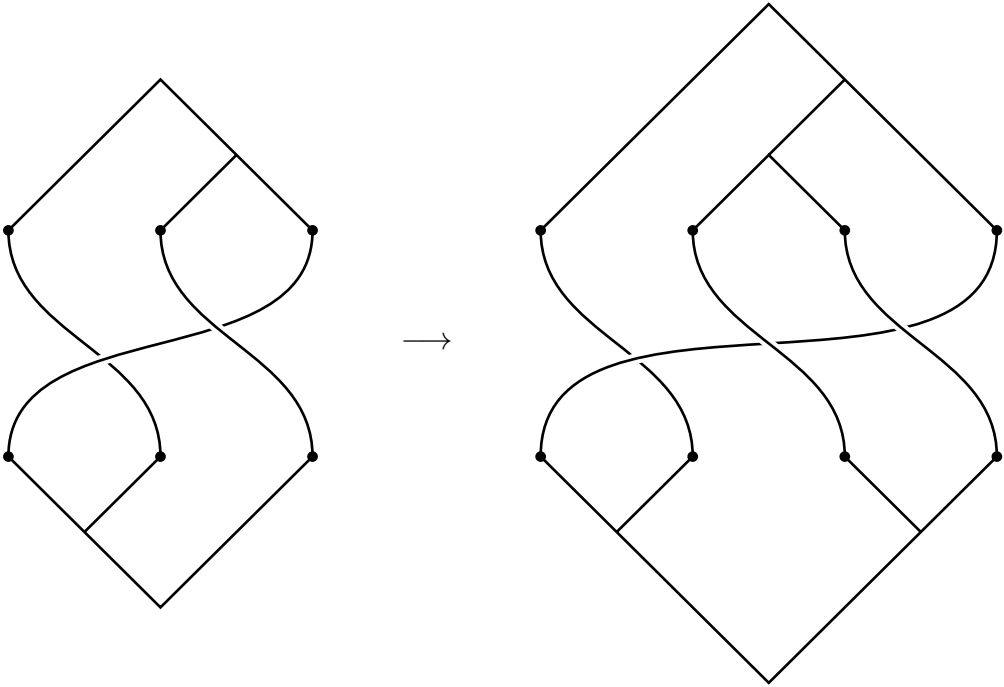
\begin{figure}[htb]
 \begin{tikzpicture}[line width=1pt]\centering
  \draw (0,0) -- (2,2) -- (4,0)   (3,1) -- (2,0);
	
	\draw[white, line width=4pt]
  (0,0) to [out=-90, in=90] (2,-3);
  \draw
  (0,0) to [out=-90, in=90] (2,-3);
	
	\draw[white, line width=4pt]
  (4,0) to [out=-90, in=90] (0,-3);
  \draw
  (4,0) to [out=-90, in=90] (0,-3);
	
  \draw[white, line width=4pt]
  (2,0) to [out=-90, in=90] (4,-3);
  \draw
  (2,0) to [out=-90, in=90] (4,-3);
	
	\filldraw (0,0) circle (1.5pt);
	\filldraw (2,0) circle (1.5pt);
	\filldraw (4,0) circle (1.5pt);
	
	\draw (0,-3) -- (2,-5) -- (4,-3)   (1,-4) -- (2,-3);
	\filldraw (0,-3) circle (1.5pt);
	\filldraw (2,-3) circle (1.5pt);
	\filldraw (4,-3) circle (1.5pt);
	
	\node at (5.5,-1.5) {$\longrightarrow$};
	
	\begin{scope}[xshift=8cm,yshift=1cm]	
   \draw (-1,-1) -- (2,2) -- (5,-1)   (3,1) -- (2,0)   (1,-1) -- (2,0) -- (3,-1);
	 
   \draw[white, line width=4pt]
   (-1,-1) to [out=-90, in=90] (1,-4);
   \draw
   (-1,-1) to [out=-90, in=90] (1,-4);
	
   \draw[white, line width=4pt]
   (5,-1) to [out=-90, in=90] (-1,-4);
   \draw
   (5,-1) to [out=-90, in=90] (-1,-4);
	
   \draw[white, line width=4pt]
   (1,-1) to [out=-90, in=90] (3,-4);
   \draw
   (1,-1) to [out=-90, in=90] (3,-4);
	
   \draw[white, line width=4pt]
   (3,-1) to [out=-90, in=90] (5,-4);
   \draw
   (3,-1) to [out=-90, in=90] (5,-4);
	
	 \filldraw (-1,-1) circle (1.5pt);
	 \filldraw (1,-1) circle (1.5pt);
	 \filldraw (3,-1) circle (1.5pt);
	 \filldraw (5,-1) circle (1.5pt);
	
	 \draw (-1,-4) -- (2,-7) -- (5,-4)   (0,-5) -- (1,-4)   (3,-4) -- (4,-5);
	 \filldraw (-1,-4) circle (1.5pt);
	 \filldraw (1,-4) circle (1.5pt);
	 \filldraw (3,-4) circle (1.5pt);
	 \filldraw (5,-4) circle (1.5pt);
	\end{scope}
 \end{tikzpicture}
 \caption{An example of an expansion using the standard $2$-ary cloning system defined on the braid groups $(B_n)_{n \in \mathbb{N}}$. We represent $(T_-,b,T_+)$ by drawing $T_+$ upside-down and below $T_-$, connecting the leaves of $T_-$ and $T_+$ with the strands of braid the $b$. Since these pictures differ by an expansion, they represent the same element of $bV=\mathscr{T}(B_*)$.}\label{fig:braid}
\end{figure}

Let us now explore some basic properties of fully compatible $d$-ary cloning systems. For such $d$-ary cloning systems, it is not too difficult to see that if $g \in \ker (\rho_n)$, then $(g) \kappa_k^n \in \ker (\rho_{n+d-1})$ for any $1 \le k \le n$ and $n \in \mathbb{N}$. In particular, given a triple of the form $(T,g,T)$ for some $d$-ary tree $T$ and $g \in \ker (\rho_{n(T)})$, it follows from this observation that any expansion of $(T,g,T)$ must be of the form $(T_k, (g)\kappa_k^{n(T)},T_k)$ for some $1 \le k \le n(T)$, which is clearly of the same form as $(T,g,T)$. Note, $T_k$ denotes the $d$-ary tree obtained from $T$ by adding a $d$-ary caret to $k$-th leaf of $T$. This shows that $$\mathscr{K}_d(G_*) := \{[T,g,T] : g \in \ker (\rho_{n(T)}) \}$$ is a subgroup of $\mathscr{T}_d(G_*)$, and in fact it is a normal subgroup of $\mathscr{T}_d(G_*)$. If $T$ is any $d$-ary tree and $K_T$ is defined to be the kernel of the map from $G_T \to S_{n(T)}$ given by $[T,g,T] \mapsto \rho_{n(T)}(g)$, which is well-defined since we are assuming the $d$-ary cloning system is fully compatible, then it is clear that $\mathscr{K}_d(G_*)$ is a directed union of the subgroups $K_T$ as $T$ varies over all $d$-ary trees. For fully compatible $d$-ary cloning systems, we have a natural group homomorphism $\pi : \mathscr{T}_d(G_*) \to V_d$ and in fact $\mathscr{K}_d(G_*)$ is the kernel of the homomorphism. We have the following lemma to record these facts about $\pi$ and $\mathscr{K}_d(G_*)$, the proof of which can be found in \cite[Lemma 3.2]{witzel18} (at least, the proof of the $d=2$ case which easily generalizes to the $d > 2$ case).

\begin{lemma}[Map to $V_d$]
Given a fully compatible $d$-ary cloning system \newline $((G_n)_{n \in \mathbb{N}}, (\rho_n)_{n \in \mathbb{N}}, (\kappa_k^n)_{k \le n})$, there is a homomorphism $\pi : \mathscr{T}_d(G_*) \to V_d$ given by 

$$[T, g,U] \mapsto [T,\rho_{n(T)}(g), U],$$ 
where the kernel is $\mathscr{K}_d(G_*)$ and the image is some group $W_d$ with $F_d \le W_d \le V_d$. As a matter of fact, we have a short exact sequence $$1 \xrightarrow{} \mathscr{K}_d(G_*) \xrightarrow{} \mathscr{T}_d(G_*) \xrightarrow{\pi} W_d \xrightarrow{} 1.$$
\end{lemma}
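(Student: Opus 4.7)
The plan is to establish three things: (i) the prescription $[T, g, U] \mapsto [T, \rho_{n(T)}(g), U]$ descends to a well-defined map $\pi$ on equivalence classes; (ii) $\pi$ is a group homomorphism; and (iii) $\ker \pi = \mathscr{K}_d(G_*)$. Once these are in hand, setting $W_d := \pi(\mathscr{T}_d(G_*))$ makes the asserted short exact sequence tautological; $W_d \le V_d$ is by definition, and $F_d \le W_d$ because any $[T, 1, U] \in F_d \subseteq \mathscr{T}_d(G_*)$ is mapped to itself by $\pi$ (using $\rho_{n(T)}(1) = 1$).

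The decisive step is (i), which is the only place where fully compatibility is actually invoked. Since equivalence of triples is the symmetric-transitive closure of single expansions, it suffices to check that one expansion in $\mathscr{T}_d(G_*)$ maps to an expansion in $V_d$. An expansion of $(T, g, U)$ has the form $(T', (g)\kappa_k^n, U')$, where $U'$ is $U$ with a $d$-ary caret glued to its $k$-th leaf and $T'$ is $T$ with a $d$-ary caret glued to its $\rho_n(g)(k)$-th leaf. The standard cloning expansion in $V_d$ applied to the image triple $(T, \rho_n(g), U)$ produces $(T', (\rho_n(g))\varsigma_k^n, U')$, with identical left and right trees since each of these depends only on $k$ and on the value $\rho_n(g)(k)$. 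Well-definedness therefore reduces to the equality
\[
\rho_{n+d-1}((g)\kappa_k^n) = (\rho_n(g))\varsigma_k^n
\]
of permutations in $S_{n+d-1}$, which is precisely Definition~\ref{def:fullycompatible}; axiom (C3) alone gives agreement only off the cloning block $\{k, \dots, k+d-1\}$, which would not suffice.

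Step (ii) is immediate: after expanding representatives to a common middle tree, $[T, g, U][U, h, W] = [T, gh, W]$, and $\pi$ of this product is $[T, \rho_{n(U)}(g)\rho_{n(U)}(h), W]$, matching the product of the images because each $\rho_n$ is a homomorphism. For step (iii), the inclusion $\mathscr{K}_d(G_*) \subseteq \ker \pi$ is immediate from the definitions. Conversely, if $\pi([T, g, U]) = e_{V_d}$, then $(T, \rho_{n(T)}(g), U)$ and the trivial triple $(T, 1, T)$ admit a common expansion in $V_d$; since every expansion of a triple of the shape $(S, 1, S)$ is again of that shape (the standard cloning of the identity permutation is the identity, and the caret is added to the $k$-th leaf on both sides), the common expansion must itself have this shape. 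Reversing the expansions of $(T, \rho_{n(T)}(g), U)$ back down to the triple one started with then forces $T = U$ and $\rho_{n(T)}(g) = 1$, placing $[T, g, U]$ in $\mathscr{K}_d(G_*)$.

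I expect the main (albeit modest) obstacle to be this last inversion in step (iii): one must argue that ``being of the form $(S, 1, S)$'' is preserved under reduction in $V_d$, not just under expansion, so that membership in this class descends to the reduced representative. This is a familiar feature of Thompson-style constructions, and can be cleanly verified by observing that a reduction of $(S, 1, S)$ necessarily collapses a caret at leaf $k$ on the right against a caret at leaf $1(k) = k$ on the left, producing another triple of the same shape. Beyond this, the argument is routine, with all genuine content concentrated in the single invocation of Definition~\ref{def:fullycompatible} in step (i).
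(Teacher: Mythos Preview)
Your proof is correct. The paper does not actually prove this lemma; it cites \cite[Lemma~3.2]{witzel18} for the $d=2$ case and remarks that the argument generalizes to arbitrary $d$. Your argument is the expected one: verify that $\pi$ respects single expansions (this is the sole place where full compatibility enters), use that each $\rho_n$ is a homomorphism to get multiplicativity, and identify the kernel by pinning down the equivalence class of the identity in $V_d$.

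One minor remark on step~(iii): you assert that two equivalent triples in $V_d$ admit a common expansion. This confluence property is true (it follows from axiom~(C2)), but you have not justified it. Your final paragraph in fact makes it unnecessary: once you know that the set $\{(S,1,S)\}$ is closed under both expansion \emph{and} reduction in $V_d$ (using injectivity of $\varsigma_k^n$ and $(1)\varsigma_k^n = 1$ for the reduction direction, exactly as you sketch), the equivalence class of the identity is precisely this set, and $(T,\rho_{n(T)}(g),U)$ belonging to it forces $T=U$ and $\rho_{n(T)}(g)=1$ directly. So the cleaner argument is already present in your write-up; you could streamline by stating it that way and dropping the common-expansion detour.
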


The fact that we have a short exact sequence means that when we form the group von Neumann algebra of $\mathscr{T}_d(G_*)$, we get the twisted crossed product decomposition of $L(\mathscr{T}_d(G_*))$ as $$L(\mathscr{T}_d(G_*)) \cong L(\mathscr{K}_d(G_*)) \rtimes_{(\sigma ,v)} W_d$$ 
for some coycle action $(\sigma,v)$. Although we will not use this fact in this paper,  it is worth recording nonetheless. 

A very important subclass of fully compatible $d$-ary cloning systems is the class of so-called pure $d$-ary cloning systems:

\begin{definition}\label{def:pure}
Call a $d$-ary cloning system $((G_n)_{n \in \mathbb{N}}, (\rho_n)_{n \in \mathbb{N}}, (\kappa_k^n)_{k \le n})$ \emph{pure} provided all the representation maps $\rho_n$ are trivial; i.e., $\rho_n(g)$ is the identity permutation on $\{1,\dots,n\}$ for every $g \in G_n$ and every $n \in \mathbb{N}$. 
\end{definition}

For pure $d$-ary cloning systems, it is clear that the cloning maps comprising the $d$-ary cloning system must actually be injective group homomorphisms (this follows immediately from axiom (C1)), the image of $\pi$ is $F_d$, and the short exact sequence splits, so we get the following internal semi-direct product decomposition: $$\mathscr{T}_d(G_*) = \mathscr{K}_d(G_*) \rtimes F_d.$$

Informally, we will regard Thompson-like groups arising from pure $d$-ary cloning systems as ``$F$-like", as they are split extensions of $F_d$. On the other hand, those arising from $d$-ary cloning systems in which the representation maps $\rho_n : G_n \to S_n$ are surjective for every $n \in \mathbb{N}$ are regarded as ``$V$-like". Indeed, in the case when the representation maps $\rho_n$ are surjective and the $d$-ary cloning system is fully compatible, it is easy to see that the image of $\pi$ is $V_d$, meaning $\mathscr{T}_d(G_*)$ is a group extension of $V_d$. 

If $((G_n)_{n \in \mathbb{N}}, (\rho_n)_{n \in \mathbb{N}}, (\kappa_k^n)_{k \le n})$ is any fully compatible $d$-ary cloning system, then the fully compatible assumption insures that $(\ker \rho_n) \kappa_k^n \subseteq \ker \rho_{n+d-1}$ as we already noted above. Since the kernels are stable under the $d$-ary cloning maps, this entails that the $d$-ary cloning system on $(G_n)_{n \in \mathbb{N}}$ can be restricted to the sequence of subgroups $(\ker \rho_n)_{n \in \mathbb{N}}$ and clearly this $d$-ary cloning system is pure. Hence, given any fully compatible $d$-ary cloning system, we can form a pure $d$-ary cloning system $((\ker \rho_n)_{n \in \mathbb{N}}, (\rho_n)_{n \in \mathbb{N}}, (\kappa_k^n)_{k \le n})$ which one might call the ``maximal pure cloning subsystem" associated to the fully compatible $d$-ary cloning system $((G_n)_{n \in \mathbb{N}}, (\rho_n)_{n \in \mathbb{N}}, (\kappa_k^n)_{k \le n})$.  For example, the kernels of the representation maps of the standard $d$-ary cloning system on the braid groups $(B_n)_{n \in \mathbb{N}}$ are the pure braid groups $(PB_n)_{n \in \mathbb{N}}$, and the resulting group formed by restricting this $d$-ary cloning system to $(PB_n)_{n \in \mathbb{N}}$ is the pure braided Higman--Thompson group $bF_d := \mathscr{T}_d(PB_*)$. Since the representation maps in the $d$-ary cloning systems on the braid groups are surjective, $bV_d$ is regarded as $V$-like, whereas $bF_d$ is regarded as $F$-like. 

The following example of $d$-ary cloning systems is an extremely important family of pure $d$-ary cloning systems, and we will constantly use and refer to them throughout the paper as an interesting source of examples and non-examples. 

\begin{example}[Direct products with monomorphisms]
Let $G$ be any group and $$\phi_1,\dots,\phi_d : G \to G$$ be a family of monomorphisms. Let $\prod^n(G)$ be the direct product of $n$ copies of $G$, and let $\rho_n : \prod^n(G) \to S_n$ be trivial. For $n \in \mathbb{N}$ and $1 \le k \le n$, define the $k$-th $d$-ary cloning map $\kappa_k^n : \prod^n(G) \to \prod^{n+d-1}(G)$ via
$$(g_1,\dots,g_n) \kappa_k^n = (g_1,\dots,g_{k-1}, \phi_1(g_k),\dots,\phi_d(g_k),g_{k+1},\dots,g_n)$$
It is relatively straightfoward to verify that this forms a $d$-ary cloning system on the sequence of groups $(\prod^n(G))_{n \in \mathbb{N}}$ which is by construction pure in the above sense. We note that the $d=2$ case was first considered by Tanushevski in \cite{tanushevski16}.
\end{example}

Finally, there is a generalization of pure $d$-ary cloning systems that does not completely fall under the umbrella of fully compatible $d$-ary cloning systems:

\begin{definition}[Slightly Pure]
Call a $d$-ary cloning system $((G_n)_{n \in \mathbb{N}}, (\rho_n)_{n \in \mathbb{N}}, (\kappa_k^n)_{k \le n})$ \emph{slightly pure} if for all $n \in \mathbb{N}$ and $g \in G_n$, we have $\rho_n(g)(n)=n$. 
\end{definition}

If $\rho_n(g)$ is the identity permutation, then clearly, in particular, it fixes $n$. Hence, all pure $d$-ary cloning systems are slightly pure. For a non-pure example, consider the subgroup $\widehat{S}_n \le S_n$ of permutations fixing $n$. The standard $d$-ary cloning system on $(S_n)_{n \in \mathbb{N}}$ restricts to $(\widehat{S}_n)_{n \in \mathbb{N}}$, and this restricted $d$-ary cloning system is slightly pure. The resulting Thompson-like group is denoted by $\widehat{V}_d := \mathscr{T}_d(\widehat{S}_*)$. Although this may seem like a rather ad hoc example at first glance, this group does appear in the literature (at least, e.g., $\widehat{V} = \widehat{V}_2$ was first considered in \cite{brin07}). Finally, for non-fully compatible examples, the R{\"o}ver--Nekrashevych groups arise from $d$-ary cloning systems which are almost never fully compatible but can be slightly pure (see \cite[Lemma 5.3]{skipper19}).

\subsection{Diverse and Uniform $d$-ary Cloning Systems}\label{diverse}

The next two properties we consider concern the $d$-ary cloning maps, while the  properties in the last section concerned the representation maps. These two properties were crucial in proving Citation~\ref{factor} and Citation~\ref{mcduff}, especially the diversity assumption. Throughout the course of this paper, we will see quite plainly how powerful, albeit modest, the diversity assumption is. As a matter of fact, the diversity assumption alone will be enough to prove that the inclusion $L(F_d) \subseteq L(\mathscr{T}_d(G_*))$ is irreducible (and hence $L(\mathscr{T}_d(G_*))$ is a type $\II_1$ factor), is singular, and even stronger it satisfies the WAHP. Many $d$-ary cloning systems have the property of being diverse, a property which we now define. 

\begin{definition}[Diverse]
A $d$-ary cloning system $((G_n)_{n \in \mathbb{N}}, (\rho_n)_{n \in \mathbb{N}}, (\kappa_k^n)_{k \le n})$ is said to be \emph{diverse} provided there exists a natural number $n_0 \in \mathbb{N}$ such that $$\bigcap_{k=1}^{n} \image \kappa_k^n = \{1\}$$ for all $n \ge n_0$.
\end{definition}

Intuitively, the diversity assumption says that the different cloning maps $G_n \to G_{n+d-1}$ tend to send $G_n$ into different regions of $G_{n+d-1}$. The diversity assumption is a very natural assumption and as might be expected many $d$-ary cloning systems satisfy this property. For example, the standard $d$-ary cloning system on $(S_n)_{n \in \mathbb{N}}$ is diverse, the standard $d$-ary cloning system on the braid groups $(B_n)_{n \in \mathbb{N}}$ is diverse (see \cite[Proposition 4.2]{bashwinger}), and some of the R{\"o}ver--Nekrashevych groups arise from diverse $d$-ary cloning systems.

We can also produce many interesting diverse $d$-ary cloning systems using the direct products with injective monomorphisms. For example, if the images of the monomorphisms $\phi_1,\dots,\phi_d : G \to G$ have trivial intersection (meaning, the intersection is $\{1\}$), then the $d$-ary cloning system is diverse. See \cite[Example 4.1]{bashwinger} for concrete examples of groups admitting monomorphisms which intersect trivially. 

There is, however, a way to bypass the restriction that the images of the monomorphisms $\phi_1,\dots,\phi_d : G \to G$ intersect trivially if we look to a certain subgroup of $\prod^n(G)$. Indeed, define $\Psi^n(G) := \{1 \} \times \prod^{n-1}(G) \le \prod^n(G)$. If $\phi_1,\dots,\phi_d : G \to G$ are any monomorphisms with the $d$-ary cloning system on $(\prod^n(G))_{n \in \mathbb{N}}$ restricted to $(\Psi^n(G))_{n \in \mathbb{N}}$, then this restricted $d$-ary cloning system on $(\Psi^n(G))_{n \in \mathbb{N}}$ is diverse. To see this, suppose that $$(1,g_2\dots,g_{n+d-1}) \in \bigcap_{k=1}^{n} \image \kappa_k^n$$
This means, in particular, that $(1,g_2,\dots,g_{n+d-1}) \in \image \kappa_1^n$ which implies $g_2 = \dots = g_{d} = 1$. Then, using the fact that $(1,\dots,1,g_{d+2},\dots,g_{n+d-1}) \in \image \kappa_2^n$, we can argue that $g_{d+1}=1$. Continuing in this fashion, we can argue that the rest of the $g_i$'s are the identity. Hence, the $d$-ary cloning system on $(\Psi^n(G))_{n \in \mathbb{N}}$ is diverse, and according to Theorems~\ref{thrm:Irreducible} and~\ref{thrm:WAHP}, the inclusion $L(F_d) \subseteq L(\mathscr{T}_d(\Psi^*(G)))$ will satisfy the WAHP and hence be singular and irreducible for any choice of group $G$ and any choice of monomorphisms $\phi_1,\dots,\phi_d : G \to G$. 

This is not the only remarkable fact about these examples. As we noted in \cite[Example 5.12]{bashwinger}, if you further assume that every monomorphism $\phi_1,\dots,\phi_d$ is the identity morphism, then the $d$-ary cloning system on $(\Psi^n(G))_{n \in \mathbb{N}}$ is also uniform (see Definition~\ref{uniform} below) and hence Citation~\ref{mcduff} tells us that $L(\mathscr{T}_d(\Psi^*(G)))$ is a type $\II_1$ McDuff factor and therefore $\mathscr{T}_d(\Psi^*(G))$ is inner amenable for any choice of group $G$. It would be interesting to find a group $G$ and monomorphisms $\phi_1,\dots,\phi_d$ such that the $d$-ary cloning system on $(\Psi^n(G))_{n \in \mathbb{N}}$ is not uniform but yet $L(\mathscr{T}_d(\Psi^*(G)))$ is a type $\II_1$ McDuff factor, has property Gamma, or at least $\mathscr{T}_d(\Psi^*(G))$ is inner amenable. Perhaps we can construct a finitely generated ``Vaes" group, i.e., an inner amenable ICC group whose group von Neumann algebra does not have property Gamma, which was predicted to exist by Vaes in \cite{vaes12}.

In summary, Table~\ref{tab:inclusions} shows many examples where inclusions of the form $L(F_d) \subseteq L(\mathscr{T}_d(G_*))$ are irreducible, singular, and even satisfy the WAHP by virtue of the corresponding $d$-ary cloning system giving rise to $\mathscr{T}_d(G_*)$ being diverse. Note, that we do not define the groups $\mathscr{T}_d(\overline{B}_*(R))$ and $\mathscr{T}_d(\text{Ab}_*)$ referenced in the last two rows of table for the purpose of saving space; we refer to \cite{bashwinger} for the explanation of these groups and the $d$-ary cloning systems giving rise to them.

\begin{table}
\centering
\begin{tabular}{|||c || p{12cm}|||}
\hline & \\
\hline
$L(F_d) \subseteq L(T_d)$   & $T_d$ Higman--Thompson group \\
$L(F_d) \subseteq L(V_d)$     & $V_d$ Higman--Thompson group \\
$L(F_d) \subseteq L(bF_d)$   & $bF_d$ Braided-Higman Thompson group \\
$L(F_d) \subseteq L(bV_d)$   & $bV_d$ Braided-Higman--Thompson group \\
$L(F_d) \subseteq L(\mathscr{T}_d(\prod^*(G)))$ & $G$ any group any $\phi_i : G \to G$ monomorphisms whose images intersect trivially \\
$L(F_d) \subseteq L(\mathscr{T}_d(\Psi^*(G_*)))$ & $G$ any group and $\phi_i : G \to G$ any monomorphisms \\
$L(F_d) \subseteq L(\widehat{V}_d)$ & $\widehat{V}_d$ is a certain variation on $V_d$\\ 
$L(F_d) \subseteq L(\mathscr{T}_d(\overline{B}_*(R)))$ & $\overline{B}_n(R)$ is the group of $n \times n$ upper-triangular matrices over any countable ring $R$ modulo the center \\
$L(F_d) \subseteq L(\mathscr{T}_d(\text{Ab}_*))$ & $\text{Ab}_n$ is the $n$-th Abels group \\
\hline & \\
\hline
\end{tabular}   
\caption{A list of a inclusions satisfying the weak asymptotic homomorphism property as a consequence of the corresponding $d$-ary cloning system being diverse, and hence inclusions which are singular and irreducible.}
\label{tab:inclusions}
\end{table}

The other property that was crucial in proving that $\mathscr{T}_d(G_*)$ can yield a type $\II_1$ McDuff factor is the uniformity property. In Section \ref{mixing}, we will see that there is a connection between the uniformity property and mixing of the inclusion $L(F_d) \subseteq L(\mathscr{T}_d(G_*))$, specifically that the inclusion never exhibits mixing whenever $(G_n)_{n \in \mathbb{N}}$ is a equipped with a uniform $d$-ary cloning system. Let us now define the uniformity property.

\begin{definition}[Uniform]\label{uniform}
A $d$-ary cloning system $((G_n)_{n \in \mathbb{N}}, (\rho_n)_{n \in \mathbb{N}}, (\kappa_k^n)_{k \le n})$ is said to be \emph{uniform} provided that for all $1 \le k \le n$ and $\ell, \ell'$ satisftying $k \le \ell \le \ell ' \le k+d-1$, we have $$\kappa_k^n \circ \kappa_\ell^{n+d-1} = \kappa_k^n \circ \kappa_{\ell '}^{n+d-1}.$$ 
\end{definition}

Intuitively, the uniformity property says if we clone an element and then clone a part of it that was involved in the first cloning, it doesn’t matter which part of it we use. Hence, as we noted above, it is clear that if all the monomorphisms $\phi_1,\dots,\phi_d$ are the identity morphism, then the $d$-ary cloning system on the direct products will be uniform. For another example, the standard $d$-ary cloning system on the braid groups is uniform. For example, in the standard 2-ary cloning system on the braid groups, if we clone the 3rd strand to create two parallel strands, and then follow that up by cloning one of the new
strands, either the 3rd or 4th, it doesn’t matter which one we clone. Either way we will end up effectively having turned the original 3rd strand into three new parallel strands. 

Finally, we note that if a $d$-ary cloning has any of the above mentioned properties, such as being fully compatible, (slightly) pure, diverse, or uniform, any $d$-ary cloning subsystem will inherit the respective properties. This can be easily verified by inspecting the definitions. 

\section{On the structure of the inclusion $L(F_d) \subseteq L(\mathscr{T}_d(G_*))$}\label{structure}

In this section, we prove some results about the most natural subfactor of $L(\mathscr{T}_d(G_*))$ one can single out---namely, the Higman--Thompson group factor $L(F_d)$. More specifically, we analyze the inclusion
$$L(F_d) \subseteq L(\mathscr{T}_d(G_*)).$$ 
What we show is that if $(G_n)_{n \in \mathbb{N}}$ is sequence of groups equipped with a diverse $d$-ary cloning system, then the above inclusion satisfies the WAHP. Then we study irreducibility, singularity, and the WAHP of the above inclusion with respect to the R{\"o}ver--Nekrashevych groups. Then we investigate the relationship, or rather the lack of any relationship, between the diversity assumption and mixing of the inclusion $L(F_d) \subseteq L(\mathscr{T}_d(G_*))$. Finally, we investigate the above inclusion for some non-diverse $d$-ary cloning systems with respect to the WAHP.

\subsection{$L(F_d) \subseteq L(\mathscr{T}_d(G_*))$ satisfies the Weak Asymptotic Homomorphism Property}\label{WAHP}

The main result of this section is theorem ~\ref{thrm:WAHP} which states if $(G_n)_{n \in \mathbb{N}}$ is a sequence of groups equipped with a diverse $d$-ary cloning system, then the inclusion $L(F_d) \subseteq L(\mathscr{T}_d(G_*))$ satisfies the WAHP. As we alluded to in Section \ref{normalizers}, Jolissaint noted in \cite{Jolissaint12a} that an inclusion of tracial von Neumann algebras satisfying the WAHP is precisely the same thing as the inclusion being weakly mixing. Thus, our result can be rephrased as saying that $L(F_d)$ is a weakly mixing subfactor of $L(\mathscr{T}_d(G_*))$ whenever $(G_n)_{n \in \mathbb{N}}$ is equipped with a diverse $d$-ary cloning system.

Before we can prove prove Theorem~\ref{thrm:WAHP}, we need to prove a few lemmas. Our first lemma is about forming powers of certain $d$-ary tree pairs (i.e., elements of $F_d$). In order to prove this lemma, we first need to deduce a ``commutation" relation on expansions of a single $d$-ary tree. Given a $d$-ary tree $T$, recall that $T_\ell$ denotes the tree obtained by adding a $d$-ary caret to the $\ell$-th leaf of $T$, where $1 \le \ell \le n(T)$. Let $(T_\ell)_{k}$ denote the tree obtained from $T_\ell$ by adding a $d$-ary to the $k$-th leaf, where $1 \le k \le n(T)+d-1$. This notation can be obviously extended to any finite number of expansions using nested parenthesis, and we will use this notation quite extensively. Now, if $1 \le k < \ell \le n(T)$, then it is clear that $(T_\ell)_k = (T_k)_{\ell+d-1}$. This is because to get from $T$ to $T_k$ we add a caret to the $k$-th leaf of $T$. Now, the leaf that used to be the $\ell$-th leaf becomes the $(\ell+d-1)$-th leaf. Thus, $(T_k)_{\ell+d-1}$ can also be obtained from $T$ by first adding a caret to the $\ell$-th leaf and then adding a caret to the $k$-th leaf, which is to say that $(T_\ell)_k = (T_k)_{\ell + d - 1}$. Notice the commutation relation $(T_\ell )_k = (T_k)_{\ell + d-1}$ essentially mirrors the commutation relation in the infinite presentation of $F_d$, which ought to come as no surprise. This observation will be helpful in the proof of the following lemma. 

\begin{lemma}\label{powers}
Let $T$ be a $d$-ary tree with $n$ leaves, and let $1 \le k < \ell \le n$. Then 

$$[T_k, T_\ell]^{m} = [\underbrace{(\dots(T_{k})_{k}\dots)_{k}}_{(m-1) \text{ times}}, (\dots((T_\ell)_{\ell+d-1})_{\ell + 2(d-1)}\dots)_{\ell+(m-1)(d-1)}]$$
for every $m \in \mathbb{N}$.
\end{lemma}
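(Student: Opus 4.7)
The plan is to prove the identity by induction on $m$, with the base case $m = 1$ being immediate since both sides reduce to $[T_k, 1, T_\ell]$. I will abbreviate the right-hand side, writing $V_m$ for the left tree $\underbrace{(\dots(T_k)_k\dots)_k}_{(m-1) \text{ times}}$ and $U_m$ for the right tree $(\dots((T_\ell)_{\ell+d-1})\dots)_{\ell+(m-1)(d-1)}$, so the claim becomes $[T_k, T_\ell]^m = [V_m, 1, U_m]$.

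For the inductive step I would compute $[V_m, 1, U_m] \cdot [T_k, 1, T_\ell]$ using the multiplication rule in $\mathscr{T}_d(G_*)$, which requires producing a common expansion of the ``inner'' trees $U_m$ and $T_k$. The natural candidate is the tree $W$ obtained from $T$ by attaching both a caret at leaf $k$ and the full $m$-caret chain at leaf $\ell$ (each successive caret glued to the rightmost leaf of its predecessor). Since $k < \ell$, these two attachments live in disjoint subtrees of $T$, so $W$ is unambiguously defined. To reach $W$ from $U_m$, simply add one caret at leaf $k$; this is legitimate because the chain at leaf $\ell$ does not disturb the first $k$ leaves, so leaf $k$ of $U_m$ is still leaf $k$ of $T$. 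To reach $W$ from $T_k$, successively add carets at positions $\ell + d - 1,\ \ell + 2(d-1),\ \ldots,\ \ell + m(d-1)$, each of which is the rightmost new leaf created by its predecessor; this installs the $m$-caret chain, with the initial shift by $d-1$ accounting for the fact that in $T_k$ the original leaf $\ell$ has moved to position $\ell + d - 1$.

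Since the group element is $1$ throughout, the expansion rule simplifies to adding a caret at the same leaf of both the left and the right tree. Expanding the first factor accordingly yields $[V_m, 1, U_m] = [(V_m)_k, 1, W] = [V_{m+1}, 1, W]$, recognizing $(V_m)_k$ as $V_{m+1}$. Expanding the second factor yields $[T_k, 1, T_\ell] = [W, 1, \widetilde{T}_\ell]$ where $\widetilde{T}_\ell$ is $T_\ell$ augmented by carets at the same positions $\ell + d - 1, \ldots, \ell + m(d-1)$; because $T_\ell$ already carries a caret at leaf $\ell$, each addition simply extends the chain by one, so $\widetilde{T}_\ell$ is the $(m+1)$-caret chain at $\ell$, i.e., $\widetilde{T}_\ell = U_{m+1}$. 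The product is then $[V_{m+1}, 1, W] \cdot [W, 1, U_{m+1}] = [V_{m+1}, 1, U_{m+1}]$, closing the induction.

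The only real obstacle here is combinatorial bookkeeping---keeping track of how leaf indices shift under repeated expansions---and this is managed by the commutation relation $(T_\ell)_k = (T_k)_{\ell + d - 1}$ for $k < \ell$ already derived immediately before the lemma; this relation mirrors the defining relation of $F_d$ and is what guarantees that the two prescriptions for the common expansion $W$ produce the same tree.
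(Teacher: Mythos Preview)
Your proof is correct and follows essentially the same route as the paper's: induction on $m$, multiplying the inductive hypothesis $[V_m,U_m]$ on the right by $[T_k,T_\ell]$, and identifying the common expansion of the inner trees via the commutation relation $(T_\ell)_k=(T_k)_{\ell+d-1}$. The only cosmetic difference is that you take $m=1$ as the (trivial) base case, whereas the paper uses $m=2$; your explicit identification of the common tree $W$ makes the bookkeeping slightly more transparent than the paper's ``propagate $k$'' description, but the underlying computation is identical.
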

\begin{proof}
The above observation allows us to easily handle the base case $m=2$. Indeed,

\begin{align*}
[T_k,T_\ell]^2 & = [T_k,T_\ell][T_k, T_\ell] \\
&= [(T_k)_k, (T_\ell)_k] [(T_k)_{\ell + d-1}, (T_\ell)_{\ell + d-1}] \\
&= [(T_k)_k, (T_\ell)_k] [(T_\ell)_k, (T_\ell)_{\ell + d-1}] \\
&=[(T_k)_k, (T_\ell )_{\ell + d-1}].
\end{align*}
Now let us deal with the inductive case:

\begin{align*}
[T_k, T_\ell]^{m+1} &= [T_k, T_\ell]^{m} [T_k,T_\ell] \\
&= [\underbrace{(\dots(T_{k})_{k}\dots)_{k}}_{(m-1) \text{ times}}, (\dots((T_\ell)_{\ell+d-1})_{\ell + 2(d-1)}\dots)_{\ell+(m-1)(d-1)}] [T_k, T_\ell].
\end{align*}
Expand the tree pair on the left in the product by adding a $d$-ary caret at the $k$-th leaf: \begin{align*}
[T_k, T_\ell]^{m+1} &= [\underbrace{(\dots(T_{k})_{k}\dots)_{k}}_{m \text{ times}}, ((\dots((T_\ell)_{\ell+d-1})_{\ell + 2(d-1)}\dots)_{\ell+(m-1)(d-1)})_k] [T_k, T_\ell].
\end{align*}
For the leftmost factor, we use the above observation to ``propagate" the index $k$  towards $T$, making sure that we add $d-1$ to the index at each step in propagating the $k$ towards $T$. As for the rightmost factor, we expand the $d$-ary tree pair at the $(\ell + d-1)$-th spot, then the $(\ell + 2 (d-1))$-th spot, then the $(\ell + 3(d-1))$-th spot, and so on. Doing this we obtain

\begin{align*}
[T_k, T_\ell]^{m+1} &= [\underbrace{(\dots(T_{k})_{k}\dots)_{k}}_{m \text{ times}}, (\dots(T_k)_{\ell + d-1} \dots )_{\ell + m(d-1)}]  \\
& [(\dots(T_k)_{\ell + d-1} \dots )_{\ell + m(d-1)}, (\dots(T_\ell)_{\ell + d-1}\dots)_{\ell + m(d-1)}] \\
&= [\underbrace{(\dots(T_{k})_{k}\dots)_{k}}_{m \text{ times}},(\dots(T_\ell)_{\ell + d-1}\dots)_{\ell + m(d-1)}].
\end{align*}
Note that the product had to be written on two lines. This finishes the inductive case and hence the lemma is proved.

\end{proof}

Now let us investigate how group inversion and $d$-ary cloning maps (and iterated compositions of them) commute. First, from the cloning a product axiom (axiom (C2)), we immediately get $$1 = (g^{-1}g) \kappa_k^n = (g^{-1})\kappa_{\rho_n(g)k} \cdot (g) \kappa_k^n$$
or $$\big((g)\kappa_k^n\big)^{-1} = (g^{-1})\kappa_{\rho_n(g)k}^n.$$

Now let $k_1 \in \{1,\dots,n\}$ and $k_2 \in \{ 1,\dots,n,n+1,\dots,n+d-1\}$. Then 

\begin{align*}
\bigg[(g)(\kappa_{k_1}^n \circ \kappa_{k_2}^{n+d-1}) \bigg]^{-1} &=  \bigg[\bigg((g)\kappa_{k_1}^n \bigg)\kappa_{k_2}^{n+d-1} \bigg]^{-1} \\
&= \bigg( \big((g)\kappa_{k_1}^n\big)^{-1} \bigg) \kappa_{\rho_{n+d-1}((g)\kappa_{k_1}^n)k_2}^{n+d-1} \\
&= \bigg( (g^{-1})\kappa_{\rho_n(g)k_1}^{n} \bigg) \kappa_{\rho_{n+d-1}((g)\kappa_{k_1}^n)k_2}^{n+d-1} \\
&= (g^{-1})\bigg( \kappa_{\rho_n(g)k_1}^{n} \circ \kappa_{\rho_{n+d-1}((g)\kappa_{k_1}^n)k_2}^{n+d-1}\bigg)\\
\end{align*}
Hence, arguing inductively we have the following lemma: 

\begin{lemma}\label{cloning_inverse}
Let $((G_n)_{n \in \mathbb{N}}, (\rho_n)_{n \in \mathbb{N}}, (\kappa_k^n)_{k \le n})$ be a $d$-ary cloning system, and let $n \in \mathbb{N}$ and $g \in G_n$ be arbitrary. For any natural number $m \ge 2$ and a sequence of natural numbers $k_1,k_2,\dots,k_m$ with $k_i \in \{1,\dots,n+(i-1)(d-1)\}$ we have $$\bigg[(g)\big(\kappa_{k_1}^n \circ \kappa_{k_2}^{n+d-1} \circ \dots \circ \kappa_{k_m}^{n+(m-1)(d-1)}\big) \bigg]^{-1} = (g^{-1})\big(\kappa_{\alpha_1}^n \circ \kappa_{\alpha_2}^{n+d-1} \circ \dots \circ \kappa_{\alpha_m}^{n+(m-1)(d-1)}\big),$$
where $\alpha_1 = \rho_n(g)k_1$ and $$\alpha_i = \rho_{n+(i-1)(d-1)} \bigg( (g)\big(\kappa_{k_1}^{n} \circ \kappa_{k_2}^{n+d-1} \circ \dots \circ \kappa_{k_{i-1}}^{n+(i-2)(d-1)}\big) \bigg)k_i$$
for $i=2,\dots,m$. 
\end{lemma}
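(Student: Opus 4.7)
The plan is to prove this by induction on $m$, using only axiom (C1) (cloning a product) together with the base identity $\bigl((g)\kappa_{k}^{n}\bigr)^{-1} = (g^{-1})\kappa_{\rho_n(g)k}^{n}$ already derived in the paragraph preceding the lemma. The case $m=2$ is explicitly worked out there, so it serves as the base case.

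For the inductive step, assume the formula holds for some $m \ge 2$ and write the element in question by peeling off the last cloning:
\[
(g)\bigl(\kappa_{k_1}^n \circ \kappa_{k_2}^{n+d-1} \circ \cdots \circ \kappa_{k_{m+1}}^{n+m(d-1)}\bigr) = (h)\kappa_{k_{m+1}}^{n+m(d-1)},
\]
where I set $h := (g)\bigl(\kappa_{k_1}^n \circ \cdots \circ \kappa_{k_m}^{n+(m-1)(d-1)}\bigr) \in G_{n+(m-1)(d-1)+d-1} = G_{n+m(d-1)}$. Applying the base identity gives
\[
\bigl((h)\kappa_{k_{m+1}}^{n+m(d-1)}\bigr)^{-1} = (h^{-1})\,\kappa_{\rho_{n+m(d-1)}(h)\,k_{m+1}}^{n+m(d-1)}.
\]
By the inductive hypothesis, $h^{-1} = (g^{-1})\bigl(\kappa_{\alpha_1}^n \circ \cdots \circ \kappa_{\alpha_m}^{n+(m-1)(d-1)}\bigr)$ with the $\alpha_i$ as defined in the statement. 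Substituting this in and recognizing that $\rho_{n+m(d-1)}(h)\,k_{m+1}$ is precisely $\alpha_{m+1}$ by the definition given in the lemma, we obtain
\[
\bigl((g)\bigl(\kappa_{k_1}^n \circ \cdots \circ \kappa_{k_{m+1}}^{n+m(d-1)}\bigr)\bigr)^{-1} = (g^{-1})\bigl(\kappa_{\alpha_1}^n \circ \cdots \circ \kappa_{\alpha_m}^{n+(m-1)(d-1)} \circ \kappa_{\alpha_{m+1}}^{n+m(d-1)}\bigr),
\]
which closes the induction.

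There is no real technical obstacle here: the entire content of the lemma is bookkeeping. The only thing that requires a bit of care is checking that the indices in the new cloning map attach correctly, that is, that $\alpha_{m+1} = \rho_{n+m(d-1)}(h)\,k_{m+1}$ matches the formula given in the statement with $i = m+1$. This is immediate from unpacking the definition of $h$. Once the base case is in hand (which was already essentially presented), the inductive step is a one-line application of the base identity followed by substitution from the inductive hypothesis.
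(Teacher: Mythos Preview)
Your proof is correct and follows exactly the approach the paper takes: the paper derives the base identity and the $m=2$ case in the paragraphs preceding the lemma and then simply writes ``Hence, arguing inductively we have the following lemma,'' leaving the straightforward inductive step to the reader. Your writeup makes that inductive step explicit and the index bookkeeping is handled correctly.
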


This lemma tells us that the group inverse ``propagates" towards the group element $g$ with the appropriate twisting by $g$ under the appropriate representation maps and (sequence of) $d$-ary cloning maps at each stage. Because the $\rho_n(g)$ are bijections, if we are given $\alpha_1,\dots,\alpha_m$, we can recursively choose the $k_i$ to satisfy the equations. This will be important for the proof of Theorem~\ref{thrm:Irreducible} because we are going to expand a certain triple $(T,g,U)$ in multiple ways, but we want to add $d$-ary carets to $T$ in a prescribed manner which will require us to carefully add $d$-ary carets to $U$ in a specific way, which is possible thanks to $\rho_n(g)$ being a bijection. 

Finally, we can prove the main result of this section.

\begin{theorem}\label{thrm:WAHP}
Let $((G_n)_{n \in \mathbb{N}}, (\rho_n)_{n \in \mathbb{N}}, (\kappa_k^n)_{k \le n})$ be a diverse $d$-ary cloning system. Then the inclusion $L(F_d) \subseteq L(\mathscr{T}_d(G_*))$ satisfies the weak asymptotic homomorphism property. 
\end{theorem}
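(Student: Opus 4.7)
The plan is to invoke Citation~\ref{groupWAHP}, which reduces the weak asymptotic homomorphism property for the inclusion to a group-theoretic statement: every one-sided quasi-normalizer $x$ of $F_d$ in $\mathscr{T}_d(G_*)$---that is, every $x$ satisfying $F_d \cdot x \subseteq \bigcup_{i=1}^{m} x_i F_d$ for some finite collection $x_1,\dots,x_m$---must already lie in $F_d$. My strategy is to extract from this quasi-normalization condition a hypothesis of the same flavor as the finite-index centralizer hypothesis exploited in the proof of Theorem~\ref{thrm:Irreducible}, and then import that proof essentially verbatim.

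Given such an $x$, I would first consider the subset $K := \{h \in F_d : x^{-1} h x \in F_d\}$, which is readily checked to be a subgroup of $F_d$ using $x^{-1}(h_1 h_2) x = (x^{-1} h_1 x)(x^{-1} h_2 x)$ and the analogous identity for inverses. The quasi-normalization hypothesis forces $K$ to have finite index: two elements $f_1, f_2 \in F_d$ lie in the same left $K$-coset precisely when $f_1 x F_d = f_2 x F_d$, and there are at most $m$ such cosets available inside $\{x_1 F_d,\dots,x_m F_d\}$. Passing to the normal core $K_0 := \bigcap_{f \in F_d} f K f^{-1}$ then yields a finite-index normal subgroup of $F_d$, and taking $N$ to be the exponent of the finite quotient $F_d/K_0$ gives $f^N \in K$---equivalently $x^{-1} f^N x \in F_d$---for every $f \in F_d$.

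This uniform-power condition is precisely what was extracted from the finite-index centralizer assumption in the proof of Theorem~\ref{thrm:Irreducible}, so I would now run that proof with $N$ playing the role of the exponent $m$ there. Choosing $N$ even larger if necessary so that $(N-1)(d-1) \geq n_0$ (where $n_0$ is the diversity threshold), writing $x = [T, g, U]$ with sufficiently many leaves, and applying the identity $x^{-1} h x \in F_d$ to the carefully chosen products $h = [T_1, T_n]^{N} [T_\ell, T_n]^{-N}$ and $h = [T_1, T_\ell]^{N}$ via Lemma~\ref{powers} and Lemma~\ref{cloning_inverse}, the resulting collapse of the group element in the triple representation of $x^{-1} h x$ would yield
\[
(g^{-1})\bigl(\kappa_1^{n} \circ \kappa_1^{n+d-1} \circ \cdots \circ \kappa_1^{n+(N-1)(d-1)}\bigr) \in \bigcap_{k=1}^{n+(N-1)(d-1)} \image \kappa_k^{n+(N-1)(d-1)} = \{1\}
\]
by diversity. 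Injectivity of the cloning maps then forces $g = 1$, so $x = [T, U] \in F_d$, which is all that is needed. In contrast to Theorem~\ref{thrm:Irreducible}, the second stage using the $F_d$-action on $[0,1]$ is not needed here, because the target is $x \in F_d$ rather than $x = 1$.

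The main obstacle is the translation performed in the second paragraph: the finite-union-of-cosets condition furnished by Citation~\ref{groupWAHP} has a different flavor from the uniform-power condition used in Theorem~\ref{thrm:Irreducible}, and identifying the subgroup $K$ and extracting $N$ via its normal core is the bridge between the two. Once this bridge is in place, the combinatorial heavy lifting has already been done in the proof of Theorem~\ref{thrm:Irreducible}, and what remains is essentially bookkeeping.
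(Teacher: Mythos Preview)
Your proposal is correct and follows essentially the same route as the paper's proof. The paper phrases the bridge step slightly differently---it observes that $F_d x F_d \subseteq \bigcup_i x_i F_d$, interprets this as saying the $F_d$-orbit of $xF_d$ in $\mathscr{T}_d(G_*)/F_d$ is finite, and invokes orbit--stabilizer to conclude the stabilizer $\{f \in F_d : x^{-1} f x \in F_d\}$ has finite index---but this is exactly your subgroup $K$, and your coset-counting argument is just orbit--stabilizer unpacked. From there both proofs extract a uniform exponent and defer to the first half of Theorem~\ref{thrm:Irreducible}, and you are right that the second stage (the action on $[0,1]$) is unnecessary here.
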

\begin{proof}
Let $x, x_1,\dots,x_t \in \mathscr{T}_d(G_*)$ be such that $$F_dx \subseteq x_1 F_d \cup \dots \cup x_t F_d.$$
Since the right hand size is invariant under multiplication on the right by elements from $F_d$, the above implies $$F_dxF_d \subseteq x_1 F_d \cup \dots \cup x_t F_d.$$
If we consider the left-translation action of $F_d$ on the left coset space $\mathscr{T}_d(G_*)/F_d$, the above says that the orbit of $xF_d$ under this action is finite. By the orbit-stabilizer theorem, this says that the stabilizer of $xF_d$ is a finite index subgroup of $F_d$. Note that $f \in F_d$ stabilizes the left coset $xF_d$ if and only if $x^{-1}fx \in F_d$. The stabilizer being of finite index implies there exists an arbitrarily large integer $m \in \mathbb{N}$ such that $f^m$ stabilizes $xF_d$ for every $f \in F_d$, or equivalently $x^{-1} f^m x \in F_d$ for every $f \in F_d$. Because the $d$-ary cloning system is diverse, we can choose $n_0 \in \mathbb{N}$ such that $$\bigcap_{k=1}^{n} \image \kappa_k^n = \{1\}$$ for all $n \ge n_0$. Now choose $m \in \mathbb{N}$ large enough such that $(m-1)(d-1) \ge n_0$ and such that $x^{-1}f^m x \in F_d$ for every $f \in F_d$. 

By performing a sufficient number of expansions of arbitrary type on the triple representing $x$, if necessary, we can find $d$-ary trees $T$ and $U$ with $n$ leaves such that $n \ge 2+ (m-1)(d-1)$ and $x = [T,g,U]$ for some $g \in G_n$. For $1 < \ell < n$, using Lemma~\ref{powers} observe that $$[T_1,T_n]^{m} = [(\dots(T_1)_1\dots)_1,(\dots(T_n)_{n+d-1}\dots)_{n+(m-1)(d-1)}]$$ and $$[T_\ell,T_n]^{-m} = [(\dots(T_\ell)_\ell\dots)_\ell,(\dots(T_n)_{n+d-1}\dots)_{n+(m-1)(d-1)}]^{-1}.$$ Given how we chose $m \in \mathbb{N}$, we know both of these elements must stabilize the element $x$, and hence it follows that their product $$h =  [(\dots(T_1)_1\dots)_1,(\dots(T_\ell)_\ell\dots)_\ell)]$$
must also stabilize the element $x$. As we noted above, this means, in particular, that conjugating this element of $F_d$ by $x$ gives us an element of $F_d$. On the one hand, note that $x$ can be expanded as $$x = [(\dots(T_\ell)_\ell\dots)_\ell, (g)( \kappa_{k_1}^{n} \circ \kappa_{k_2}^{n+d-1} \circ \dots \circ \kappa_{k_m}^{n+(m-1)(d-1)}), (\dots(U_{k_1})_{k_2}\dots)_{k_m}]$$
where $k_i$ is chosen recursively so that $\rho_n(g)k_1 = \ell$ and $$\rho_{n+(i-1)(d-1)}((g) (\kappa_{k_1}^{n} \circ \kappa_{k_2}^{n+d-1} \circ \dots \circ \kappa_{k_{i-1}}^{n+(i-2)(d-1)}) )k_i = \ell$$ for $i=2,\dots,m$. On the other hand, note that $x^{-1}$ can be expanded as $$x^{-1} = [U,g^{-1},T] = [U', (g^{-1})(\kappa_{1}^{n} \circ \kappa_{1}^{n+d-1} \circ \dots \circ \kappa_{1}^{n+(m-1)(d-1)}), (\dots(T_1)_1\dots)_1],$$ where $U'$ is obtained from $U$ by attaching $d$-ary carets at the appropriate spots. After computing $x^{-1}hx$, we obtain $$[U', (g^{-1})(\kappa_{1}^{n} \circ \kappa_{1}^{n+d-1} \circ \dots \circ \kappa_{1}^{n+(m-1)(d-1)}) \cdot (g)( \kappa_{k_1}^{n} \circ \kappa_{k_2}^{n+d-1} \circ \dots \circ \kappa_{k_m}^{n+(m-1)(d-1)}), (\dots(U_{k_1})_{k_2}\dots)_{k_m}]$$
and this lies in $F_d$ if and only if the group element equals the identity, which means $$(g^{-1})(\kappa_{1}^{n} \circ \kappa_{1}^{n+d-1} \circ \dots \circ \kappa_{1}^{n+(m-1)(d-1)}) \cdot (g)( \kappa_{k_1}^{n} \circ \kappa_{k_2}^{n+d-1} \circ \dots \circ \kappa_{k_m}^{n+(m-1)(d-1)}) = 1$$
or, after rearranging, $$(g^{-1})(\kappa_{1}^{n} \circ \kappa_{1}^{n+d-1} \circ \dots \circ \kappa_{1}^{n+(m-1)(d-1)}) = [(g)( \kappa_{k_1}^{n} \circ \kappa_{k_2}^{n+d-1} \circ \dots \circ \kappa_{k_m}^{n+(m-1)(d-1)})]^{-1}.$$ Using Lemma~\ref{cloning_inverse} to propagate the inverse through the composition of the $d$-ary cloning maps on the right-hand side, the equation becomes $$(g^{-1})(\kappa_{1}^{n} \circ \kappa_{1}^{n+d-1} \circ \dots \circ \kappa_{1}^{n+(m-1)(d-1)}) = [(g^{-1})( \kappa_{\alpha_1}^{n} \circ \kappa_{\alpha_2}^{n+d-1} \circ \dots \circ \kappa_{\alpha_m}^{n+(m-1)(d-1)})],$$
where $\alpha_1 = \rho_n(g) k_1$ and $$\alpha_i = \rho_{n+(i-1)(d-1)}((g)(\kappa_{k_1}^n \circ \kappa_{k_2}^{n+d-1} \circ \dots \circ \kappa_{k_{i-1}}^{n+(i-2)(d-1)}))k_i.$$
But given how we chose the $k_i$, $\alpha_i$ reduces to $\alpha_i = \ell$ for every $i=1,2,\dots,m$, so the equation reduces even further to $$(g^{-1})(\kappa_{1}^{n} \circ \kappa_{1}^{n+d-1} \circ \dots \circ \kappa_{1}^{n+(m-1)(d-1)}) = (g^{-1})( \kappa_{\ell}^{n} \circ \kappa_{\ell}^{n+d-1} \circ \dots \circ \kappa_{\ell}^{n+(m-1)(d-1)})$$ Because $1 < \ell < n$ was arbitrary, it follows that the left-hand side lies in the image of $\kappa_{k}^{n+(m-1)(d-1)}$ for all $1 \le k \le n-1$. All that remains is to show that it lies in the image of $\kappa_k^{n+(m-1)(d-1)}$ for $n \le k \le n+(m-1)(d-1)$, and then the diversity assumption will allow us to conclude $g=1$. To do this, let $1 < \ell \le n$ be arbitrary and consider the element $f = [T_1,T_\ell]$ in $F_d$. Then by Lemma~\ref{powers} $$f^m = [\underbrace{(\dots(T_1)_1\dots)_1}_{(m-1) \text{times}},(\dots(T_\ell)_{\ell+d-1}\dots)_{\ell+(m-1)(d-1)}]$$
and given how we chose $m \in \mathbb{N}$, it must centralize $x$. In this case, the triple representing $x$ can be expanded so that $$x = [(\dots(T_\ell)_{\ell+d-1}\dots)_{\ell + (m-1)(d-1)}, (g)(\kappa_{k_1}^n \circ \kappa_{k_2}^{n+d-1} \circ \dots \circ \kappa_{k_m}^{n+(m-1)(d-2)}), (\dots(U_{k_1})_{k_2}\dots)_{k_m}],$$
where $k_i$ is recursively chosen so that $\rho_n((g)) k_1 = \ell$ and $$\rho_{n+(i-1)(d-1)}((g)(\kappa_{k_1}^n \circ \kappa_{k_2}^{n+d-1} \circ \dots \circ \kappa_{k_{i-1}}^{n + (i-2)(d-1)}))k_i = \ell + (i-1)(d-1)$$
for $i=2,\dots,m$. Computing $x^{-1}f^mx$, we obtain $$[U', (g^{-1}) (\kappa_1^n \circ \kappa_1^{n+d-1} \circ \dots \circ \kappa_1^{n+(m-1)(d-1)}) \cdot (g) (\kappa_{k_1}^n \circ \kappa_{k_2}^{n+d-1} \circ \dots \circ \kappa_{k_m}^{n+(m-1)(d-1)}), (\dots(U_{k_1})_{k_2}\dots)_{k_m}]$$
for some $d$-ry tree $U'$ obtained $U$, which lies in $F_d$ if and only if $$(g^{-1}) (\kappa_1^n \circ \kappa_1^{n+d-1} \circ \dots \circ \kappa_1^{n+(m-1)(d-1)}) \cdot (g) (\kappa_{k_1}^n \circ \kappa_{k_2}^{n+d-1} \circ \dots \circ \kappa_{k_m}^{n+(m-1)(d-1)}) = 1$$
or, after rearranging, $$(g^{-1}) (\kappa_1^n \circ \kappa_1^{n+d-1} \circ \dots \circ \kappa_1^{n+(m-1)(d-1)}) = [(g) (\kappa_{k_1}^n \circ \kappa_{k_2}^{n+d-1} \circ \dots \circ \kappa_{k_m}^{n+(m-1)(d-1)})]^{-1}.$$ Again, using Lemma~\ref{powers} to propagate the inverse through the composition of the $d$-ary cloning maps on the right-hand side, the equation becomes $$(g^{-1}) (\kappa_1^n \circ \kappa_1^{n+d-1} \circ \dots \circ \kappa_1^{n+(m-1)(d-1)}) = (g^{-1}) (\kappa_{\alpha_1}^n \circ \kappa_{\alpha_2}^{n+d-1} \circ \dots \circ \kappa_{\alpha_m}^{n+(m-1)(d-1)})$$
where, given how we recursively chose the $k_i$, $\alpha_1 = \ell$ and $\alpha_i = \ell + (i-1)(d-1)$ for $i=2,\dots,m$. Hence, the above equation reduces even further to $$(g^{-1}) (\kappa_1^n \circ \kappa_1^{n+d-1} \circ \dots \circ \kappa_1^{n+(m-1)(d-1)}) = (g^{-1}) (\kappa_{\ell}^n \circ \kappa_{\ell+d-1}^{n+d-1} \circ \dots \circ \kappa_{\ell + (m-1)(d-1)}^{n+(m-1)(d-1)})$$
This demonstrates that the left-hand side lies in the image of $\kappa_k^{n+(m-1)(d-1)}$ for $n \le k \le n +(m-1)(d-1)$. Hence, combining both parts, we have that $$(g^{-1}) (\kappa_1^n \circ \kappa_1^{n+d-1} \circ \dots \circ \kappa_1^{n+(m-1)(d-1)}) \in \bigcap_{k=1}^{n+(m-1)(d-1)} \image \kappa_{k}^{n+(m-1)(d-1)} = \{1\}$$
and injectivity of the $d$-ary cloning maps tells us that $g=1$. Hence, $x = [T,g,U] = [T,1,U]=[T,U]$ and therefore $x$ belongs to $F_d$, which concludes the proof and hence $L(F_d) \subseteq L(\mathscr{T}_d(G_*))$ satisfies the weak asymptotic homomorphism property. 
\end{proof}

The fact that the inclusion $L(F_d) \subseteq L(\mathscr{T}_d(G_*))$ satisfies the WAHP whenever $(G_n)_{n \in \mathbb{N}}$ is a sequence of groups equipped with a diverse $d$-ary cloning system will have a number of consequences. Some immediate consequences are that normalizers and one-sided (quasi) normalizers of $L(F_d)$ in $L(\mathscr{T}_d(G_*))$ all belong to $L(F_d)$ and hence the inclusion $L(F_d) \subseteq L(\mathscr{T}_d(G_*))$ is singular.

\begin{corollary}
Let $((G_n)_{n \in \mathbb{N}}, (\rho_n)_{n \in \mathbb{N}}, (\kappa_k^n)_{k \le n})$ be a diverse $d$-ary cloning system. Then $\mathcal{QN}^{(1)}_{L(\mathscr{T}_d(G_*))}(L(F_d)) \subseteq L(F_d)$, and also $$ \mathcal{ON}_{L(\mathscr{T}_d(G_*))}(L(F_d))$$ and therefore $$\mathcal{N}_{L(\mathscr{T}_d(G))}(L(F_d))$$ equal $\mathcal{U}(L(F_d))$.
In particular, the inclusion $L(F_d) \subseteq L(\mathscr{T}_d(G_*))$ is singular. 
\end{corollary}

Another consequence of the inclusion $L(F_d) \subseteq L(\mathscr{T}_d(G_*))$ satisfying the WAHP is that the inclusion $L(F_d) \subseteq L(\mathscr{T}_d(G_*))$ is irreducible, which will in turn have a number of consequences which we will see in this section and Section \ref{singularnotWAHP} and Section \ref{sec:higmanmcduff}. As an immediate consequence, $L(\mathscr{T}_d(G_*))$ will be a type $\II_1$ factor whenever $(G_n)_{n \in \mathbb{N}}$ is a sequence of groups equipped with a diverse $d$-ary cloning system. Before we can prove this, however, we need the following lemma.

\begin{lemma}\label{lem:FdICC}
Let $W_d$ be a subgroup of the Higman--Thompson group $V_d$ containing the commutator subgroup $[F_d,F_d]$. Then each non-trivial element of $W_d$ has infinitely many $[F_d,F_d]$-conjugates (so in particular $W_d$ is ICC) which is equivalent to saying the inclusion $L([F_d,F_d]) \subseteq L(W_d)$ is irreducible. In particular, $L([F_d,F_d]) \subseteq L(F_d)$ is irreducible.
\end{lemma}
\begin{proof}
In this proof, we are going to utilize the natural action of $V_d$ on $[0,1)$ by certain right-continuous bijections, and actually the subgroup $F_d$ acts on $[0,1)$ by certain homeomorphism which fix $0$ (see \cite{cannon96}).
Note that a non-trivial $f \in W_d$ has infinitely many $[F_d,F_d]$-conjugates if and only if the centralizer of $f$ in $[F_d,F_d]$ has infinite index in $[F_d,F_d]$. Hence, we will assume that $f \in F_d$ has finite index centralizer in $[F_d,F_d]$ and then argue that it must be the identity. Because $[F_d,F_d]$ is an infinite simple group (see \cite[Theorem~4.13]{brown87}), and since infinite simple groups do not have proper finite index subgroups, it follows that the centralizer of $f$ in $[F_d,F_d$ equals all of $[F_d,F_d]$, meaning that every commutator commutes with $f$. Note that if two elements $g$ and $h$ in $V_d$ commute, then $g$ must setwise stabilize the fixed point set of $h$ (and vice-versa). Given any $a \in (0,1) \cap \mathbb{Z} \left[\frac{1}{d} \right]$, we can select $b \in (0,1) \cap \mathbb{Z} \left[\frac{1}{d} \right]$ and $g \in [F_d,F_d]$ such that the support of $g$ in $(0,1)$ (set of non-fixed points) equals $(a,b)$. Because $g \in [F_d,F_d]$, $f$ and $g$ must commmute, and because $g$ is a right-continuous bijection, we know $g$ must fix $a$. By density of $(0,1) \cap \mathbb{Z}\left[\frac{1}{d}\right]$ in $[0,1)$, we can see that $f=1$.

Because $[F_d,F_d]$ is an infinite simple group, it is necessarily ICC and hence $L([F_d,F_d])$ is a $\II_1$ factor. Hence, by Citation \ref{SmithIrred}, it follows that $L([F_d,F_d]) \subseteq L(W_d)$ is irreducible and hence $L(W_d)$ is a $\II_1$ factor. 
\end{proof}

We note that, technically, Picoaroaga already proved that $F_d$ is ICC (see \cite[Theorem 3.1]{picioroaga06}). However, we included the result for a few reasons. Firstly, it keeps things more self-contained. Secondly, the proof above result $F_d$ is a bit more streamlined than the one in \cite{picioroaga06}. Thirdly, we actually prove something strictly stronger, because the elements Picioroaga defines in \cite{picioroaga06} to prove the ICC property do not all belong to the commutator subgroup. We also note that subgroups of $V_d$ containing $[F_d,F_d]$ deserve to be called Thompson-like, although they generally will not arise from a $d$-ary cloning system (especially if they do not contain $F_d$).

Now we can finally prove that diverse $d$-ary cloning systems give rise to irreducible inclusions and hence these Thompson-like groups yield type $\II_1$ factors:

\begin{corollary}\label{thrm:Irreducible}
Let $((G_n)_{n \in \mathbb{N}}, (\rho_n)_{n \in \mathbb{N}}, (\kappa_k^n)_{k \le n})$ be a $d$-ary cloning system which is diverse. Then the inclusion $L(F_d) \subseteq L(\mathscr{T}_d(G_*)$ is irreducible and in particular $L(\mathscr{T}_d(G_*))$ is a type $\II_1$ factor. 
\end{corollary}
\begin{proof}
Because the sequence of groups $(G_n)_{n \in \mathbb{N}}$ is equipped with a $d$-ary cloning system is diverse, by Theorem \ref{thrm:WAHP} we know that the inclusion $L(F_d) \subseteq L(\mathscr{T}_d(G_*))$ satisfies the WAHP. This entails that the inclusion is also irreducible. By the Lemma \ref{lem:FdICC}, we know that $L(F_d)$ is a type $\II_1$ factor and therefore $L(\mathscr{T}_d(G_*))$ must also be a type $\II_1$ factor.
\end{proof}

A few remarks are in order concerning Theorem \ref{thrm:WAHP} and Theorem \ref{thrm:Irreducible}.  First, from a theoretical standpoint, this is a spectacular improvement of Citation~\ref{factor}. Indeed, we were able to dispense with the fully compatible assumption, some lemmas in \cite{bashwinger}, and some rather technical results in \cite{preaux13}. Moreover, we obtained a much stronger conclusion than merely $L(\mathscr{T}_d(G_*))$ being a type $\II_1$ factor. Since virtually all the important and relevant $d$-ary cloning systems are diverse, we have many examples of inclusions of type $\II_1$ factors which satisfy the WAHP and hence are singular and irreducible, and hence many examples of type $\II_1$ factors (refer to Table \ref{tab:inclusions}). Some of the R{\"o}ver--Nekrashevych groups arise from diverse $d$-ary cloning systems, but since not all them arise from a divese $d$-ary cloning system, we provided an independent argument that they are ICC and hence yield type $\II_1$ factors (see \cite[Proposition 4.7]{bashwinger}), and we shall independently argue that $L(F_d)$ is an irreducible subfactor in the R{\"o}ver--Nekrashevych group factors. 

As a matter of fact, in the case of the R{\"o}ver--Nekrashevych groups we will prove something even stronger---namely, that $L([F_d,F_d])$ is an irreducible subfactor in the R{\"o}ver--Nekrashevych group factors. This fact, together with Lemma \ref{lem:FdICC}, raises the question about whether it is possible to further strengthen Theorem~\ref{thrm:Irreducible} or even prove that the inclusion $L([F_d,F_d]) \subseteq L(\mathscr{T}_d(G_*))$ is singular or, stronger than that, satisfies the WAHP whenever $(G_n)_{n \in \mathbb{N}}$ is equipped with a diverse $d$-ary cloning system. We suspect that the answers to these questions is affirmative, but we presently do not know how to prove it. The difficulty in proving any of these conjectures lies in the substantially complicated tree-theoretic description of the commutator subgroup $[F_d,F_d]$. As a matter of fact, a $d$-ary tree pair $[T,U] \in F_d$ lies in $[F_d,F_d]$ if and only if it lies in the intersection of the kernels of the characters defined in Section 2.3 of \cite{zaremsky17F_n}. In this context, a character on a group $G$ means a group homomorphism $G \to \mathbb{R}$, rather than the usual notion encountered in operator algebras or representation theory which is a normalized, positive definite, tracial function $G \to \mathbb{C}$ (see Section \ref{sec:higmanmcduff} for a more precise definition of a character in the operator-algebraic or representation-theoretic sense). 
Now, a $d$-ary tree pair $[T,U]$ being in the intersection of the kernels of the characters as they are defined in \cite{zaremsky17F_n} amounts to $[T,U]$ satisfying a very complicated equality involving specific measurements on each tree roughly of the form, ``how much does the depth change from the $k$-th leaf to the $(k+1)$-th leaf, summed over all $k$ within a given congruence class $\mod d-1$?", where ``depth" of the $k$-th leaf in a $d$-ary tree is the length of the unique reduced path from the root to the $k$-th leaf. Strengthening Theorem~\ref{thrm:Irreducible} or proving the inclusion $L([F_d,F_d]) \subseteq L(\mathscr{T}_d(G_*))$ is singular or even satisfies the WAHP in the manner just described will require confronting a combinatorial nightmare. We leave it to future work to determine whether these are possible. 

Finally, the fact that the inclusion $L(F_d) \subseteq L(\mathscr{T}_d(G_*))$ is irreducible allows us to conclude that the so-called relative central sequence algebra is non-trivial and even a non-atomic von Neumann algebra. Recall that a type $\II_1$ factor $N$ being McDuff can be alternatively characterized in terms of the central sequence algebra $N' \cap N^{\omega}$ being non-commutative for some free ultrafilter $\omega$ on $\mathbb{N}$, where $N^{\omega}$ is the ultrapower of $N$. If $N$ is of a type $\II_1$ subfactor (not necessarily McDuff anymore) of $M$, we can form the so-called relative central sequence algebra $N' \cap M^\omega$, and clearly we have the inclusion $N' \cap N^{\omega} \subseteq N' \cap M^{\omega}$. In \cite{fang06}, Fang, Ge, and Li proved that if $N \subseteq M$ is an irreducible inclusion of type $\II_1$ factors, then the relative central sequence algebra $N' \cap M$ is either trivial or non-atomic. Hence, if $(G_n)_{n \in \mathbb{N}}$ is equipped with a diverse $d$-ary cloning system, then because $L(F_d)$ is a McDuff factor and because we have the inclusion $L(F_d)' \cap L(F_d)^{\omega} \subseteq L(F_d)' \cap L(\mathscr{T}_d(G_*))^\omega$, we can conclude that the relative central sequence algebra $L(F_d)' \cap L(\mathscr{T}_d(G_*))^\omega$ is non-trivial and hence a non-atomic von Neumann algebra, which we record as the following observation. 

\begin{observation}
Let $((G_n)_{n \in \mathbb{N}}, (\rho_n)_{n \in \mathbb{N}}, (\kappa_k^n)_{k \le n})$ be a $d$-ary cloning system which is diverse. Then the relative central sequence algebra $L(F_d)' \cap L(\mathscr{T}_d(G_*))^{\omega}$ is non-trivial and hence a non-atomic von Neumann algebra. 
\end{observation}

More generally, if the inclusion is irreducible, then the above will hold. We mention this because later in the paper (section?) we will see an example of a cloning system which is not diverse yet the inclusion is irreducible (in fact, it will satisfy the WAHP). We the leave the further analysis of (relative) central sequence algebras arising from $d$-ary cloning systems to future work.

\subsection{R{\"o}ver--Nekrashevych Groups}\label{nekrashevych}

The R{\"o}ver--Nekrashevych groups were first fully introduced by Nekrashevych in \cite{nekrashevych04} as a certain subgroup of the unitary group of a Cuntz-Pimsner algebra. They are defined as a certain ``mash-up" of a self-similar group $G$ and the Higman--Thompson group $V_d$, and typically denoted as $V_d(G)$. In this section, we prove irreducibility for the case of the R{\"o}ver--Nekrashevych group factors, and in fact we obtain a stronger conclusion than in Theorem~\ref{thrm:Irreducible}.  We prove that the inclusion $L([F_d,F_d]) \subseteq L(V_d(G))$ is irreducible, and obtain as an easy corollary that the inclusion $L(F_d) \subseteq L(V_d(G))$ is also irreducible. However, we shall see that neither $L(F_d) \subseteq L(V_d(G))$ nor $L([F_d,F_d]) \subseteq L(V_d(G))$ always satisfies the WAHP.

Although the R{\"o}ver--Nekrashevych groups do in fact arise from $d$-ary cloning systems, as was proven in \cite{skipper21}, we will use neither this description nor their description as a certain group of unitaries in this section. Instead, we use their alternative description as certain homeomorphisms of the boundary of the infinite, rooted, regular  $d$-ary tree $\mathcal{T}_d$. Note that the boundary of $\mathcal{T}_d$ is homeomorphic to the $d$-ary Cantor space $$\mathcal{C}_d := \{1,2\dots,d\}^{\mathbb{N}},$$ which can be identified with the space of all infinite words in the alphabet $\{1,2,\dots,d\}$. This particular model of R{\"o}ver--Nekrashevych groups as certain groups of homeomorphisms is what will enable us to prove some stronger results. As we hinted at in the introduction, the reason for dealing with the R{\"o}ver--Nekrashevych groups separately in this way is that they are the most important class of groups arising from $d$-ary cloning systems which are almost never fully compatible and not necessarily diverse, so none of our current theorems apply to prove that the inclusion is irreducible. Let us now recall the concept of a self-similar group. 

To do this, we need to first properly define $\mathcal{T}_d$. The vertex set of $\mathcal{T}_d$ is $\{1,2,\dots,d\}^*$, the set of all the finite words in the alphabet $\{1,2,\dots,d\}$, with the empty word $\emptyset$ corresponding to the root. Two vertices are adjacent if they are of the form $w$ and $wi$ for $w \in \{1,2,\dots,d\}^*$ and $i \in \{1,2,\dots,d\}$. For each $1 \le i \le d$, define $\mathcal{T}_d(i)$ to be the induced subgraph of $\mathcal{T}_d$ spanned by $iw$ with $w \in \{1,2,\dots,d\}^*$, which is naturally isomorphic to $\mathcal{T}_d$ via $\delta_i : \mathcal{T}_d \to \mathcal{T}_d(i)$ sending $w \mapsto iw$. 

Let $\aut (\mathcal{T}_d)$ be the group of automorphisms of $\mathcal{T}_d$. Because the root is the only vertex of degree $d$, the others having degree $d+1$, every automorphism stabilizes the ``level-$1$ set" of vertices $\{1,2,\dots,d\}$. In particular, we get an epimomorphism $\rho : \aut (\mathcal{T}_d) \to S_d$ with the kernel being the subgroup of all automorphisms fixing every level-$1$ vertex, meaning that it is isomorphic to $\aut (\mathcal{T}_d)^d$. Since the map clearly splits we have $$\aut (\mathcal{T}_d) \cong S_d \ltimes \aut (\mathcal{T}_d)^d,$$
or, more concisely, we have the wreath product decomposition $$\aut (\mathcal{T}_d) \cong S_d \wr \aut (\mathcal{T}_d).$$

For each $1 \le i \le d$, let $\phi_i : \aut (\mathcal{T}_d) \to \aut (\mathcal{T}_d)$ be the function (not homomrphism) given by  $$\phi_i(g) := \delta^{-1}_{\rho(g)i} \circ g \big|_{\mathcal{T}_d(i)} \circ \delta_i,$$
which is well-defined since the image of $g \big|_{\mathcal{T}_d(i)} \circ \delta_i$ is $\mathcal{T}_d(\rho(g)i)$. Finally, a group $G \le \aut (\mathcal{T}_d)$ is said to be \emph{self-similar} if for all $1 \le i \le d$ we have $\phi_i (G) \subseteq G$.

With this in mind, we can now define the R{\"o}ver--Nekrashevych groups. First, given $w \in \{1,2,\dots,d\}^*$, the \emph{cone} on $w$ is defined as $$\mathcal{C}_d(w) := \{w \kappa : \kappa \in C_d\}$$
which is canonically homeomorphic to $\mathcal{C}_d$ via $h_w : \mathcal{C}_d \to \mathcal{C}_d(w)$ given by $\kappa \mapsto w \kappa$. These form a clopen basis for the topology on $\mathcal{C}_d$. Given a self-similar group $G$, the \emph{R{\"o}ver--Nekrashevych group} on $G$ or built from $G$ is the group of all self-homeomorphisms of $\mathcal{C}_d$ defined as follows 
\begin{enumerate}
    \item Take a partition of $\mathcal{C}_d$ into finitely many cones $\mathcal{C}_d(w_1^+)$,\dots,$\mathcal{C}_d(w_n^+)$
    \item Take another partition of $\mathcal{C}_d$ into the same number of cones $\mathcal{C}_d(w_1^-)$,\dots,$\mathcal{C}_d(w_n^-)$
    \item Map $\mathcal{C}_d$ to itself bijectively by sending each $\mathcal{C}_d(w_i^+)$ to some $\mathcal{C}_d(w_j^-)$ via $h_{w_j^-} \circ g_i \circ h_{w_i^+}^{-1}$ for some $g_1,\dots,g_n \in G$.
\end{enumerate}
Note that the composition $h_{w_j^-} \circ g_i \circ h_{w_i^+}^{-1}$ makes sense because we can view $g_i$ as a homeomorphism of $\mathcal{C}_d$, as every automorphism of $\mathcal{T}_d$ induces a homeomorphism of its boundary $\partial \mathcal{T}_d \cong \mathcal{C}_d$. Also, when $G = \{1\}$, we obtain the Higman--Thompson group $V_d(\{1\}) = V_d$. 

First, let us explain why the inclusions $L([F_d,F_d]) \subseteq L(V_d(G))$ and $L(F_d) \subseteq L(V_d(G))$ do not always satisfy the WAHP, and then we will prove irreducibility. Note, any permutation of $\{1,2,\dots , d\}$ naturally induces an automorphism of $\mathcal{T}_d$. The permutation $i \mapsto d-i+1$ induces the ``full reflection" $h_0$ of $\mathcal{T}_d$, and this in turn induces a homeomorphism $h : \mathcal{C}_d \to \mathcal{C}_d$. If $G$ is any self-similar group containing the full reflection automorphism $h_0$ of $\mathcal{T}_d$, then $h$ is a element of $V_d(G)$ which does not belong to $V_d$ (and hence not $F_d$) but which normalizes $F_d$. First, it does not belong to $V_d$ because any element $g \in V_d$ has the property that $\kappa$ and $g(\kappa)$ have the same infinite tail for any $\kappa \in \mathcal{C}_d$; i.e., they may have a different finite prefix but eventually the infinite words $\kappa$ and $g(\kappa)$ ``look" the same. But $h$ violates this property many times over. For example, $h(111\cdots) = ddd\cdots$. Second, it is clear that $h$ normalizes $F_d$ since $F_d$ is precisely the subgroup of $V_d(G)$ which preserves the lexiographic ordering on $\mathcal{C}_d$ (which is in fact a total order), and it is clear that $h$ is order reversing. This shows that the normalizer of $F_d$ in $V_d(G)$ is non-trivial which translates to the inclusion $L(F_d) \subseteq L(V_d(G))$ not being singular and hence not satisfying the WAHP. Since $h^{-1} [f,g] h = [h^{-1}fh, h^{-1}gh]$ for all $f,g \in F_d$, it follows that $h$ also normalizes $[F_d,F_d]$ and hence the inclusion $L([F_d,F_d]) \subseteq L(V_d(G))$ does not satisfy the WAHP for precisely the same reason. Again, if $G$ is any self-similar group containing the full reflection, then neither inclusion satisfies the WAHP.

Since the inclusion $L(F_d) \subseteq L(V_d(G))$ does not satisfy the WAHP, it follows that the $d$-ary cloning system giving rise to these particular R{\"o}ver--Nekrashevych groups cannot be diverse. However, this can be seen more directly: when $d$ is even and $n$ is a power of $d$, $$(1 ~ n)(2 ~ n-1) \dots (d/2 ~ (d/2)+1)(h_0, \dots ,h_0)$$ 
lies in the image of every one of the $d$-ary cloning maps; whereas when $d$ is odd $$(1 ~ n)(2 ~ n-1) \dots ((d+1)/2 ~ ((d+1)/2)+1)(h_0, \dots ,h_0)$$
lies in the images.
We refer to \cite{skipper19} for the definition of the $d$-ary cloning maps used to build the R{\"o}ver--Nekrashevych groups. We leave the description of the (one-sided/quasi-)normalizers of $F_d$ and $[F_d,F_d]$ in $V_d(G)$, and hence their corresponding von Neumann algebra (one-sided/quasi-)normalizers, to future work. Now we prove irreducibility of the inclusion $L([F_d,F_d]) \subseteq L(V_d(G))$ for any self-similar group $G$.

\begin{theorem}
Let $G \le \aut (\mathcal{T}_d)$ be any self-similar group. Then every non-trivial element of the R{\"o}ver--Nekrashevych group $V_d(G)$ has infinitely many $[F_d,F_d]$-conjugates which is equivalent to saying that the inclusion $L([F_d,F_d]) \subseteq L(V_d(G_*))$ is irreducible.
\end{theorem}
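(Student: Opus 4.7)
The plan is to follow the general strategy of the last paragraph in the proof of Theorem~\ref{thrm:Irreducible}, but leverage the fact that $V_d(G)$ acts as homeomorphisms of the Cantor space $C_d$, so that the ``localize and contradict" argument using the $F_d$-action on $[0,1]$ can be executed directly without ever needing to show the group-element coordinate is trivial. An element $x \in V_d(G)$ has infinitely many $[F_d,F_d]$-conjugates if and only if the centralizer $C_{[F_d,F_d]}(x)$ has infinite index in $[F_d,F_d]$. Since $[F_d,F_d]$ is an infinite simple group (see \cite[Theorem~4.13]{brown87}) and therefore admits no proper finite-index subgroup, if the centralizer has finite index it must equal all of $[F_d,F_d]$. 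Thus, arguing contrapositively, it suffices to show that if a non-identity $x \in V_d(G)$ commutes with every element of $[F_d,F_d]$, then $x=1$.

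Suppose for contradiction that $x \in V_d(G)$ is non-trivial and commutes with every element of $[F_d,F_d]$. Since $x$ is a non-identity homeomorphism of $C_d$, pick $\kappa \in C_d$ with $x(\kappa) \ne \kappa$. By continuity and the fact that the cones $\{C_d(w)\}_{w \in \{1,\dots,d\}^{*}}$ form a clopen basis for $C_d$, we can find a cone $C_d(w)$ containing $\kappa$ with $x(C_d(w)) \cap C_d(w) = \emptyset$. Extending $w$ if necessary, we may further assume $w$ is not a prefix of $1^{\infty}$ nor of $d^{\infty}$; for instance, replacing $w$ with $w \cdot 1 \cdot d$ preserves the disjointness $x(C_d(w)) \cap C_d(w) = \emptyset$ since sub-cones only shrink.

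The key step is then to exhibit a non-trivial element $f \in [F_d,F_d]$ whose support is contained in $C_d(w)$. Identifying $C_d$ with $[0,1]$ lexicographically sends $1^\infty$ to $0$ and $d^\infty$ to $1$, and under this identification $[F_d,F_d]$ is exactly the subgroup of $F_d$ consisting of those elements which are the identity on neighborhoods of $0$ and $1$. Since our chosen $w$ is a prefix of neither $1^\infty$ nor $d^\infty$, the cone $C_d(w)$ corresponds to a closed subinterval of $(0,1)$ bounded away from the endpoints, so any element of $F_d$ with support in $C_d(w)$ automatically lies in $[F_d,F_d]$. Such an element is easily produced: take any $d$-ary tree $T$ containing $w$ as a leaf and form $[T',T'']$, where $T'$ and $T''$ are two distinct expansions of $T$ that differ only inside the subtree rooted at the leaf $w$.

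The contradiction then follows immediately: since $x(C_d(w)) \cap C_d(w) = \emptyset$, the support of $xfx^{-1}$, which is contained in $x(\mathrm{supp}(f)) \subseteq x(C_d(w))$, is disjoint from the support of $f$. As $f \ne 1$, this forces $xfx^{-1} \ne f$, contradicting $x \in C_{V_d(G)}([F_d,F_d])$. Hence $x = 1$, and the equivalence with irreducibility of $L([F_d,F_d]) \subseteq L(V_d(G))$ is supplied by Citation~\ref{SmithIrred}. The main obstacle is the careful verification that the cone can be shrunk to avoid the endpoints $1^\infty$ and $d^\infty$ of the lexicographic order while keeping the disjointness property; once this is arranged, the combinatorial fact that ``$[F_d,F_d] = $ elements identity near the endpoints'' does the rest.
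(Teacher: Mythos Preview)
Your proof is correct and follows essentially the same approach as the paper: reduce via simplicity of $[F_d,F_d]$ to the case where $x$ commutes with all of $[F_d,F_d]$, then exploit the rich supply of elements of $[F_d,F_d]$ with small support in $C_d$ to derive a contradiction from the action on the Cantor space. The only cosmetic difference is that the paper takes a neighborhood $U$ of $\kappa$ missing $x(\kappa)$ and uses that commuting elements preserve each other's supports, whereas you arrange the slightly stronger condition $x(C_d(w))\cap C_d(w)=\emptyset$ and compare the supports of $f$ and $xfx^{-1}$ directly; both are equivalent packagings of the same idea.
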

\begin{proof}

Suppose that $f \in V_d(G)$ has finite index centralizer in $[F_d,F_d]$. We want to argue that $f=1$. Because $[F_d,F_d]$ is an infinite simple group, it follows that the centralizer of $f$ in $[F_d,F_d]$ must in fact equal $[F_d,F_d]$, meaning that $f$ commutes with every element of $[F_d,F_d]$. Since $f$ commutes with every element of $[F_d,F_d]$, it stabilizes the support of every element of $[F_d,F_d]$. By way of contradiction, suppose $f \neq 1$, meaning there is a point $\kappa \in C_d$ such that $f(\kappa ) \neq \kappa$. Because $\kappa$ and $f(\kappa)$ are distinct, we can choose an open neighborhood $U$ of $\kappa$ which does not contain $f(\kappa)$. Now choose any element $g \in [F_d,F_d]$ such that $\kappa \in \text{supp} (g) \subseteq U$. Since $f$ and $g$ commute, it follows that $$f(\kappa ) \in f(\text{supp}(g)) \subseteq \text{supp}(g) \subseteq U,$$ which is a contradiction. Hence, $f$ must be the identity, thereby establishing the conclusion. 
\end{proof}

From this we easily obtain the following corollary:

\begin{corollary}
Let $G \le \aut (\mathcal{T}_d)$ be any self-similar group. Then every non-trivial element of the R{\"o}ver--Nekrashevych group $V_d (G)$ has infinitely many $F_d$-conjugates which is equivalent to saying that the inclusion $L(F_d) \subseteq L(\mathscr{T}_d(G_*))$ is irreducible.
\end{corollary}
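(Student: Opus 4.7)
The plan is to derive this corollary as an immediate consequence of the preceding theorem. First, I would observe that $[F_d, F_d]$ is a subgroup of $F_d$, so for any non-trivial element $g \in V_d(G)$ the set of $F_d$-conjugates $\{h^{-1}gh : h \in F_d\}$ contains the set of $[F_d, F_d]$-conjugates $\{h^{-1}gh : h \in [F_d,F_d]\}$. The preceding theorem then directly implies that the $F_d$-conjugacy class of $g$ is infinite, since it contains the infinite $[F_d,F_d]$-conjugacy class. In other words, enlarging the conjugating subgroup can only enlarge (or preserve) the resulting orbit, so the conclusion for $F_d$ is strictly weaker than the conclusion for $[F_d,F_d]$ that we have already established.

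The second step is to translate this group-theoretic statement into the von Neumann algebraic statement about irreducibility of $L(F_d) \subseteq L(V_d(G))$, which is furnished immediately by Citation~\ref{SmithIrred}: the inclusion is irreducible precisely when every non-trivial element of $V_d(G)$ has infinitely many $F_d$-conjugates.

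The main obstacle, if one can even call it that, is nonexistent here; the real work was carried out in the proof of the preceding theorem, where the infinite simplicity of $[F_d,F_d]$ was combined with the observation that any element of $V_d(G)$ commuting with every element of $[F_d,F_d]$ must setwise stabilize the support of each such element, which forced the commuting element to be the identity by a separation argument in the Cantor space $C_d$. Once that stronger theorem is in hand, this corollary is essentially a one-line remark.
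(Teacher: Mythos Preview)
Your proposal is correct and matches the paper's approach exactly: the paper states the corollary immediately after the theorem with the phrase ``From this we easily obtain the following corollary'' and gives no proof, indicating it is meant as the trivial consequence you describe. Your observation that enlarging the conjugating subgroup from $[F_d,F_d]$ to $F_d$ can only enlarge the orbit, together with Citation~\ref{SmithIrred}, is precisely the intended reasoning.
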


\subsection{Mixing of $L(F_d)$ in $L(\mathscr{T}_d(G_*))$}\label{mixing}
In Section~\ref{WAHP}, we saw that when $(G_n)_{n \in \mathbb{N}}$ is equipped with a diverse $d$-ary cloning system, the inclusion $L(F_d) \subseteq L(\mathscr{T}_d(G_*))$ satisfies the WAHP which is equivalent to the inclusion being weakly mixing. Given how strong the diversity assumption has proven to be thus far, one might wonder whether the subfactor $L(F_d)$ can exhibit the stronger property of being mixing in the von Neumann algebra of any of these groups coming from $d$-ary cloning systems. Unfortunately, this is where the strength of the diversity hypothesis fails us, and, interestingly, to see this we need to appeal to the uniformity property. What we will see is that $L(F_d)$ is never mixing in $L(\mathscr{T}_d(G_*))$ whenever $(G_n)_{n \in \mathbb{N}}$ is equipped with a uniform $d$-ary cloning system. Since virtually all the relevant $d$-ary cloning systems which are uniform are also diverse, this will show that the diversity assumption does not generally imply that the inclusion $L(F_d) \subseteq L(\mathscr{T}_d(G_*))$ is mixing.  

For our purposes, we only care about inclusions of von Neumann algebras arising from group inclusions. Given an inclusion of countable discrete groups $H \le G$, we can translate this to an inclusion of von Neumann algebras $L(H) \subseteq L(G)$. According to \cite[Theorem~4.3]{cameron13}, the inclusion $L(H) \subseteq L(G)$ is mixing (i.e., $L(H)$ is a mixing subalgebra of $L(G)$) if and only if $g^{-1} H g \cap H$ is a finite group for every $g \in G \setminus H$ (i.e., $H$ is almost malnormal in $G$). When $H$ is torsion-free, such as with $F_d$, mixing is equivalent to requiring that $g^{-1} H g \cap H$ is the identity subgroup for every $g \in G \setminus H$ (i.e., $H$ is malnormal in $G$).

Before we explain why $L(F_d) \subseteq L(\mathscr{T}_d(G_*))$ is never mixing whenever $(G_n)_{n \in \mathbb{N}}$ is equipped with a uniform $d$-ary cloning system, let us recall a key property of uniform $d$-ary cloning systems about commuting elements, which was crucial in the proof of Citation \ref{mcduff}. First, note that vertices of a $d$-ary tree can be naturally labelled by finite words in the alphabet $\{1,2, \dots, d\}$. Given a finite word $v$ in this alphabet, we say two $d$-ary trees $T$ and $U$ \emph{agree away from} $v$ if there exists a $d$-ary tree with a leaf labeled $v$ such that each of $T$ and $U$ can be obtained from this tree by gluing a $d$-ary tree to this leaf. With this in mind, we have the following key property of uniform $d$-ary cloning systems from \cite{bashwinger}.

\begin{refer}{\cite[Lemma 5.3]{bashwinger}}\label{cit:uniform}
Let $((G_n)_{n \in \mathbb{N}}, (\rho_n)_{n \in \mathbb{N}}, (\kappa_k^n)_{k \le n})$ be a uniform $d$-ary cloning system. Let $R_-$ be a $d$-ary tree with $n$ leaves, say with one leaf labeled by $v$. Let $R_{+}$ be another $d$-ary with $n$ leaves, one of which is also labelled $v$. Let $T$ and $U$ be trees that agree away from $v$, with the same number of leaves. Then every element of the form $[R_-,g,R_+]$ commutes with $[T,U]$.  
\end{refer}

Let $(G_n)_{n \in \mathbb{N}}$ be a sequence of non-trivial groups equipped with a uniform $d$-ary cloning system. Using Citation \ref{cit:uniform}, it is easy to produce elements $x \in \mathscr{T}_d(G_*) \setminus F_d$ such that $x^{-1} F_d x \cap F_d$ is non-trivial. For example, let $R$ be any $d$-ary tree with the finite word $v$ representing any leaf of $R$, and let $g \in G_{n(R)}$ be any non-trivial element. Define $T$ to be the $d$-ary tree constructed from $R$ by gluing any $d$-ary tree to the leaf of $R$ labelled $v$, and let $U$ be the $d$-ary tree constructed in the same way but gluing any other $d$-ary tree to $v$. Then by construction $T$ and $U$ agree away from $v$ and together they define a non-trivial element $[T,U]$ of $F_d$. Moreover, $x = [R,g,R]$ is an element of $\mathscr{T}_d(G_*)$  not belonging to $F_d$ which, according to Citation \ref{cit:uniform}, commutes with the element $[T,U]$, implying that $x^{-1} F_d x \cap F_d$ is non-trivial. We summarize the preceding discussion into the following observation:

\begin{observation}\label{obs:mixing}
Let $((G_n)_{n \in \mathbb{N}}, (\rho_n)_{n \in \mathbb{N}}, (\kappa_k^n)_{k \le n})$ be a uniform $d$-ary cloning system with $G_n$ a non-trivial group for every $n \in \mathbb{N}$. Then the inclusion $L(F_d) \subseteq L(\mathscr{T}_d(G_*))$ is not mixing. 
\end{observation}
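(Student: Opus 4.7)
The plan is to leverage the malnormality characterization of mixing subalgebras of group von Neumann algebras, together with the commutation phenomenon guaranteed by Citation~\ref{cit:uniform}, to exhibit an explicit violation of malnormality of $F_d$ in $\mathscr{T}_d(G_*)$. By \cite[Theorem~4.3]{cameron13}, combined with the fact that $F_d$ is torsion-free, showing that $L(F_d) \subseteq L(\mathscr{T}_d(G_*))$ fails to be mixing reduces to producing a single element $x \in \mathscr{T}_d(G_*) \setminus F_d$ for which $x^{-1} F_d x \cap F_d$ is non-trivial. Equivalently, I will construct a non-trivial $f \in F_d$ that commutes with some $x \in \mathscr{T}_d(G_*) \setminus F_d$.

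First, I will pick any $d$-ary tree $R$ with at least two leaves (so that $n(R) \ge 2$), designate one of its leaves by a finite word $v$ in the alphabet $\{0,1,\dots,d-1\}$, and choose a non-trivial element $g \in G_{n(R)}$, which exists by the hypothesis that every $G_n$ is non-trivial. The element $x := [R, g, R]$ lies in $\mathscr{T}_d(G_*)$ and does not lie in $F_d$, since the middle group element is non-trivial (and this remains true under any expansion, because $(g)\kappa_k^{n(R)}$ is non-trivial by injectivity of the cloning maps).

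Next, I will construct $T$ and $U$ from $R$ by gluing two different, non-trivially distinct $d$-ary trees to the leaf $v$ and leaving the rest of $R$ unchanged; the resulting pair $[T,U]$ is then a non-trivial element of $F_d$ by design, and $T$ and $U$ agree away from $v$ in the precise sense required by Citation~\ref{cit:uniform}. That citation then immediately yields $[R,g,R]\,[T,U] = [T,U]\,[R,g,R]$, i.e., $x^{-1}[T,U]x = [T,U]$, placing the non-trivial element $[T,U]$ in $x^{-1} F_d x \cap F_d$. This contradicts malnormality of $F_d$ in $\mathscr{T}_d(G_*)$ and hence, by \cite[Theorem~4.3]{cameron13}, contradicts mixing of the inclusion $L(F_d) \subseteq L(\mathscr{T}_d(G_*))$.

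There is no real obstacle here: the only substantive ingredient is Citation~\ref{cit:uniform}, which is precisely tailored to this sort of construction, and the non-triviality hypothesis on the $G_n$ is exactly what is needed to guarantee the witness $x \notin F_d$. The remaining work is purely the bookkeeping of naming a leaf $v$, choosing two distinct trees to graft at $v$, and verifying that $[T,U]$ is visibly non-trivial in $F_d$.
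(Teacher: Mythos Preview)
Your proposal is correct and follows essentially the same approach as the paper: both use the (almost) malnormality characterization of mixing from \cite{cameron13}, then invoke Citation~\ref{cit:uniform} with $R_-=R_+=R$, a non-trivial $g\in G_{n(R)}$, and trees $T,U$ built from $R$ by grafting distinct subtrees at a chosen leaf $v$ to produce a non-trivial $[T,U]\in F_d$ commuting with $[R,g,R]\in\mathscr{T}_d(G_*)\setminus F_d$. The only minor addition you make is the explicit remark that $x\notin F_d$ persists under expansion by injectivity of the cloning maps, which is a harmless clarification.
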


In addition to being diverse, every one of the groups referenced in Table~\ref{tab:inclusions} arises from a uniform $d$-ary cloning system with the exception of the direct product $d$-ary cloning system with monomorphisms. For these $d$-ary cloning systems to be uniform, we need to assume that monomorphisms $\phi_1,\dots,\phi_d : G \to G$ are the identity. However, if they are all the identity, then the $d$-ary cloning system on $(\prod^n(G))_{n \in \mathbb{N}}$ is not diverse, although the $d$-ary cloning system on $(\Psi^n(G))_{n \in \mathbb{N}}$ is diverse. In summary, the diversity assumption does not imply that the inclusion $L(F_d) \subseteq L(\mathscr{T}_d(G_*))$ is mixing.

As for mixing in the non-uniform case, we do not presently know whether this is possible. The only non-uniform $d$-ary cloning systems we know of are some of the direct product examples with monomorphisms and the $d$-ary cloning systems giving rise to the R{\"o}ver--Nekrashevych groups. In Section \ref{non-diverse}, we consider a non-diverse cloning system which also happens to be non-uniform, where we prove that it gives rise to an irreducible inclusion satisfying the WAHP; however, we show that the inclusion is not mixing. 

Regarding the R{\"o}ver--Nekrashevych groups, we saw that the inclusion $L(F_d) \subseteq L(V_d(G))$, as well as $L([F_d,F_d,]) \subseteq L(V_d(G))$, cannot be weakly mixing whenever $G$ is a self-similar group containing the full reflection, and therefore it cannot be mixing since mixing implies weakly mixing. However, we do not know if the inclusion $L(F_d) \subseteq L(V_d(G))$ can be mixing when $V_d(G)$ arises from diverse $d$-ary cloning system. In this case, $G$ cannot contain the full reflection and the inclusion $L(F_d) \subseteq L(V_d(G))$ is weakly mixing, so mixing is not immediately precluded. We leave it to future work to determine whether mixing is possible among the non-uniform $d$-ary cloning systems. Now we turn to some examples of non-diverse $d$-ary cloning systems and study them with respect to the WAHP. 

\subsection{Non-diverse $d$-ary cloning systems and the weak asymptotic homomorphism property}\label{non-diverse}

In Section \ref{diverse}, we gave a table containing a plethora of examples of diverse $d$-ary cloning systems and hence instances where the inclusion $L(F_d) \subseteq L(\mathscr{T}_d(G_*))$ satisfies the WAHP. In this section, we explore some examples of non-diverse $d$-ary cloning systems with respect to this property. What we will see is that it is possible for the inclusion $L(F_d) \subseteq L(\mathscr{T}_d(G_*))$ to satisfy the WAHP and for it not to, thereby showing that while diversity is an incredibly strong sufficient condition, it is not a necessary condition.

As for a non-diverse $d$-ary cloning system producing an inclusion $L(F_d) \subseteq L(\mathscr{T}_d(G_*))$  does not satisfy the WAHP, we have already seen such an example when we looked at R{\"o}ver--Nekrashevych groups. We saw that as soon as a self-similar group $G$ contains the full reflection described in Section \ref{nekrashevych}, the $d$-ary cloning system cannot be diverse, and that the homeomorphism induced by the full reflection normalizes $F_d$. This entails that the inclusion $L(F_d) \subseteq L(V_d(G))$ is non-singular and hence does not satisfy the WAHP. But on the other hand, the inclusion $L([F_d,F_d]) \subseteq L(V_d(G))$ is irreducible and therefore $L(F_d) \subseteq L(V_d(G))$ is, too. It would be nice to produce an example for which we have irreducibility but not the WAHP using the direct product example with monomorphisms. At the moment, however, it is unclear how to do this.

Let us first look at a non-diverse $d$-ary cloning system for which the inclusion $L(F_d) \subseteq L(\mathscr{T}_d(G_*))$ does not satisfy the WAHP using the direct product examples with monomorphisms.

\begin{example}[Examples not satisfying the WAHP]
With respect to the direct product example with monomorphisms, to find some non-diverse $d$-ary cloning systems we need to consider the case where the images of the monomorphisms $\phi_1,\dots,\phi_d : G \to G$ do intersect non-trivially. In this case, we can achieve this by simply letting all the monomorphisms be the identity. Hence, let $G$ be any non-trivial group (typically ICC so that $\mathscr{T}_d(\prod^*(G))$ is ICC) and let $\phi_i : G \to G$ be the identity morphism for all $i=1,\dots,d$. First, in this setup, it is rather easy to see that the $d$-ary cloning system is in fact non-diverse because the $n$-tuple $(g,\dots,g)$ lies in the image of every $d$-ary cloning map for any $g \in G$ and every $n \in \mathbb{N}$. Second, the inclusion $L(F_d) \subseteq L(\mathscr{T}_d(\prod^*(G))$ is never irreducible. If $g \in G$ is any non-trivial element, then it is not difficult to argue that $[\cdot,g,\cdot]$ is a non-trivial element fixed by $F_d$ via conjugation and hence has finitely many $F_d$-conjugates. Of course, this also shows that the normalizer of $F_d$ in $\mathscr{T}_d(\prod^*(G))$ is non-trivial and therefore the inclusion $L(F_d) \subseteq L(\mathscr{T}_d(\prod^*(G))$ does not satisfy the WAHP. We note that all of this works if instead we just take the monomorphisms $\phi_1,\dots,\phi_d : G \to G$ to simply have a common non-trivial fixed point in $G$.

Let us actually compute the normalizer of $F_d$ in $\mathscr{T}_d(\prod^*(G))$. We claim that this normalizer consists of all elements of the form $$[T,(g,\dots,g),U]$$
with $T$ and $U$ $d$-ary trees and $g \in G$. First, it is clear that any element of this form normalizes $F_d$. As for the other direction, suppose that $[T,(g_1,\dots,g_n),U]$ normalizes $F_d$. Expanding the triple $(T, (g_1,\dots,g_n),U)$ by adding a caret at the first leaf and applying the appropriate cloning map to the tuple $(g_1,g_2,\dots,g_n)$, the first entry $g_1$ is replaced by a ``block" of $g_1$'s of length $d$. Expanding $T$ at the first leaf $m$ times successively and applying the appropriate sequence of cloning maps to the tuple, the first entry $g_1$ is replaced by a block of $g_1$'s of length $md$. Choose $m \in \mathbb{N}$ such that $md \ge n$. Note that, up to expansion, $[T,(g_1,\dots,g_n),U]$ is equivalent to both $$[(\dots((\underbrace{T_1)_1)\dots)_1}_{m \text{ times}},(\underbrace{g_1,\dots,g_1}_{md \text{ times}}, g_2,g_3\dots,g_n), (\dots((U_1)_1)_1\dots)_1],$$
which is formed by adding a $d$-ary caret at the first leaf of the tree $m$-times and $$[(\dots((T_n)_{n+d-1})_{n+2(d-1)}\dots)_{n+m(d-1)}, (g_1,g_2,\dots,\underbrace{g_n,\dots,g_n}_{md \text{ times}}) (\dots((U_n)_{n+d-1})_{n+2(d-1)}\dots)_{n+m(d-1)}],$$
which is formed by adding a $d$-ary caret at the last leaf of the tree $m$-times. Now consider the element $$[(\dots((\underbrace{T_1)_1)_1\dots)_1}_{m \text{ times}}, (\dots((T_n)_{n+d-1})_{n+2(d-1)}\dots)_{n+m(d-1)}]$$
which lies in $F_d$. Since $md \ge n$, when we conjugate this particular element of $F_d$ by $[T,(g_1,\dots,g_n),U]$ using the above two expansions, the whole block of $g_1$'s in the tuple $(g_1,\dots,g_1, g_2,g_3\dots,g_n)^{-1}$ completely ``covers" the block $g_1,g_2,\dots,g_n$ in the tuple $(g_1,g_2,\dots,g_n,\dots,g_n)$. This means that when we compute the conjugate, what we obtain is $$[(\dots((T_1)_1)_1\dots)_1, (1,g_1^{-1}g_2, g_1^{-1}g_3,\dots,g_1^{-1}g_n,\dots),(\dots((U_n)_{n+d-1})_{n+2(d-1)}\dots)_{n+m(d-1)}].$$
Now, the entries following $g_1^{-1}g_n$ in the tuple are immaterial for our purposes, hence our reason putting ellipses; we do not mean to indicate that this is an infinite sequence. Now this lies is in $F_d$ because we assumed that $[T,(g_1,\dots,g_n),U]$ normalizes $F_d$. Hence, it follows that $g_1^{-1}g_2 = 1$, $g_1^{-1} g_3=1$,\dots, $g_1^{-1}g_n = 1$, and therefore $g_1 = g_2 = \dots = g_n$, proving the claim.

Unfortunately, because the inclusion is not irreducible, this means that the normalizer of $L(F_d)$ in $L(\mathscr{T}_d(\prod^*(G)))$ does not admit a nice description in terms of the group normalizer. It is clear that all elements of the form $w \lambda_x$, where $w \in U(L(F_d))$ and $x \in \mathcal{N}_{\mathscr{T}_d(\prod^*(G))}(F_d)$, normalize the subfactor $L(F_d)$. However, without irreducibility the normalizer of $L(F_d)$ in $L(\mathscr{T}_d(\prod^*(G))$ can be strictly larger. We leave it to future work to compute the von Neumann algebra (one-sided/quasi-)normalizer of these, and related, examples.

Now let us turn to some examples of non-diverse $d$-ary cloning systems where we will see that the inclusion satisfies the WAHP. 
\end{example}

\begin{example}[Examples satisfying the weak asymptotic homomorphism property]

Under the case of the images of the monomorphisms $\phi_1,\dots,\phi_d : G \to G$ intersecting non-trivially, we just saw that when all of the monomorphisms are the identity morphism (or they at least have a non-trivial common fixed point), the inclusion does not satisfy the WAHP. Therefore, if we want a non-diverse $d$-ary cloning system to produce an inclusion which satisfies the WAHP, we still need to look among those examples where the images intersect non-trivially but we need to modify things slightly. We achieve this taking one of the monomorphisms to be a non-identity monomorphism with some additional properties, and we do this by considering the classical case (i.e., $d=2$). First, by saying a self-homomorphism $\phi$ of a group $G$ is \emph{fixed-point-free}, we mean that there is no $g \in G \setminus \{1\}$ for which $\phi (g) = g$.

\begin{proposition}\label{prop:fixed-point}
Let $G$ be a non-trivial group admitting a fixed-point-free automorphism $\phi$ of order two. Take $\phi_1$ be the identity map and $\phi_2 = \phi$. Then the cloning system on $(\prod^n(G))_{n \in \mathbb{N}}$ with the monomorphisms $\phi_1, \phi_2$ is not diverse but the inclusion $L(F) \subseteq L(\mathscr{T}(\prod^*(G)))$ satisfies the weak asymptotic homomorphism property. 
\end{proposition}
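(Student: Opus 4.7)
The plan is to establish the three claims in turn. For non-diversity, note that with $\phi_1=\mathrm{id}$ and $\phi_2=\phi$, the image of $\kappa_k^n$ is precisely $\{(h_1,\dots,h_{n+1}) : h_{k+1}=\phi(h_k)\}$, so
$$\bigcap_{k=1}^{n}\image\kappa_k^n=\{(h_1,\dots,h_{n+1}) : h_{j+1}=\phi(h_j)\text{ for all }j\}=\{(h,\phi(h),h,\phi(h),\dots) : h\in G\},$$
which is nontrivial since $G$ is nontrivial. Hence the cloning system is not diverse.

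For irreducibility, I would follow the proof of Theorem~\ref{thrm:Irreducible} as closely as possible. Suppose $x\in\mathscr{T}(\prod^*(G))$ has finite-index centralizer in $F$, and after a sufficient number of expansions write $x=[T,(g_1,\dots,g_n),U]$, with $n,m\in\mathbb{N}$ chosen so that $f^m$ centralizes $x$ for every $f\in F$, $n\ge m+1$, and $m\ge 2$. The cascade of calculations in Theorem~\ref{thrm:Irreducible}, which invoked diversity only at the very last step, goes through verbatim and yields the membership
$$(g_1^{-1},\dots,g_n^{-1})(\kappa_1^n\circ\kappa_1^{n+1}\circ\cdots\circ\kappa_1^{n+m-1})\in\bigcap_{k=1}^{n+m-1}\image\kappa_k^{n+m-1}.$$
A direct iteration shows the left-hand side equals the tuple $(g_1^{-1},\phi(g_1)^{-1},\dots,\phi(g_1)^{-1},g_2^{-1},\dots,g_n^{-1})$, with $m$ consecutive copies of $\phi(g_1)^{-1}$ occupying positions $2$ through $m+1$; by the first paragraph, every element of the intersection on the right has the alternating form $(h,\phi(h),h,\phi(h),\dots)$ for some $h\in G$. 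Matching positions $2$ and $3$ of these two descriptions gives $\phi(h)=\phi(g_1)^{-1}=h$, so $\phi(h)=h$ and hence $h=1$ by fixed-point-freeness of $\phi$; the target alternating tuple is therefore all-trivial, forcing $g_1=g_2=\cdots=g_n=1$. Thus $x=[T,U]\in F$, and the $F$-action on $[0,1]$ argument at the end of the proof of Theorem~\ref{thrm:Irreducible} yields $x=1$, establishing irreducibility.

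For the weak asymptotic homomorphism property, I would invoke Citation~\ref{groupWAHP}: assuming $Fx\subseteq x_1F\cup\cdots\cup x_tF$, the orbit-stabilizer reduction from the proof of Theorem~\ref{thrm:WAHP} shows that $x^{-1}f^mx\in F$ for every $f\in F$ and some sufficiently large $m\in\mathbb{N}$. Since the calculations of the previous paragraph relied only on $x^{-1}f^mx\in F$ rather than on the stronger $[x,f^m]=1$, the same reasoning yields $x\in F$, as required. The main obstacle is conceptual rather than computational: the intersection of cloning images is nontrivial here, so one cannot directly invoke the endgame of Theorem~\ref{thrm:Irreducible}. The crucial substitute is the observation that two consecutive equal entries in an alternating tuple $(h,\phi(h),h,\phi(h),\dots)$ force $\phi(h)=h$, and hence $h=1$ via fixed-point-freeness of $\phi$; this single observation replaces the full diversity hypothesis and drives both proofs.
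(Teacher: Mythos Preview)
Your proposal is correct, but it takes a genuinely different route from the paper's own proof. The paper exploits purity to write $\mathscr{T}(\prod^*(G))=\mathscr{K}(\prod^*(G))\rtimes F$ and thereby reduces both irreducibility and the weak asymptotic homomorphism property to statements about elements $x=[T,(g_1,\dots,g_n),T]\in\mathscr{K}(\prod^*(G))$. For irreducibility, the paper then \emph{directly constructs} infinitely many distinct $F$-conjugates $f_{n_\ell}^{-1}xf_{n_\ell}$ by comparing the explicit tuples $(g_1,\phi(g_1),\dots,\phi(g_1),g_2,\dots,g_n)$ for varying block-lengths and deriving a contradiction from fixed-point-freeness. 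By contrast, you recycle the entire machinery of Theorem~\ref{thrm:Irreducible} wholesale, keeping $x=[T,g,U]$ general and replacing only the final ``diversity'' step: rather than concluding from $\bigcap_k\image\kappa_k=\{1\}$, you observe that the intersection consists of alternating tuples $(h,\phi(h),h,\phi(h),\dots)$, and that the computed element has two consecutive equal entries in positions $2$ and $3$, forcing $\phi(h)=h$ and hence $h=1$. Your approach is more economical in that it reuses the general framework and needs no semi-direct reduction (at the cost of invoking the $[0,1]$-action endgame to pass from $x\in F$ to $x=1$), while the paper's approach is more self-contained and transparent, giving the infinitely many conjugates by hand. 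Both hinge on the same key insight you identify: two adjacent equal entries in an alternating tuple collapse everything via fixed-point-freeness.
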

\begin{proof}
First, it is not difficult to show that when $n$ is odd, the $n+1$ tuple $$(g,\phi(g),g, \dots ,g,\phi(g))$$ 
lies in the images of the cloning maps $\kappa_k^n$ for $1 \le k \le n$ for any $g\in G$,
whereas when $n$ is even, the $(n+1)$-tuple $$(g,\phi(g),g,\dots,\phi(g),g)$$ 
lies in the images for any $g \in G$, thereby proving that the cloning system fails to be diverse. Now let us argue that the inclusion satisfies the WAHP.

Because the cloning system is pure, we get the internal semi-direct decomposition $$\textstyle \mathscr{T}(\prod^*(G))) = \mathscr{K}(\prod^*(G)) \rtimes F.$$
From this decomposition, it is not difficult to see that to prove that the inclusion $L(F) \subseteq L(\mathscr{T}(\prod^*(G)))$ satisfies the WAHP, it is sufficient (and necessary) to show that if $x, x_1,\dots,x_t \in \mathscr{K}(\prod^*(G))$ satisfy $$Fx \subseteq \bigcup_{i=1}^t x_i F,$$
then $x =1$. As we noted in the proof of Theorem~\ref{thrm:WAHP}, the above right coset covering condition implies that the stabilizer of $xF$ under the left-translation action of $F$ on $\mathscr{T}(G^*)/F$ is a finite index subgroup of $F$; moreover, an element $f \in F$ stabilizes $xF$ under this action if and only if $x^{-1}fx \in F$. Since the stabilizer has finite index in $F$, there is an arbitrarily large $m \in \mathbb{N}$ such that $f^m$ stabilizes $xF$ for every $f \in F$. Because $x \in \mathscr{K}(\prod^*(G))$, this means that $x$ is of the form $[T,(g_1, \dots , g_n), T]$ for some binary tree $T$ with $n$ leaves and $g_1, \dots , g_n \in G$. Choose $m \in \mathbb{N}$ large enough such that $m > n+1$ and $f^{m+1}$ stabilizes $xF$ for every $f \in F$. If $f = [T_1,T_n]$, then using Lemma ~\ref{powers}, it is not hard to see that $$f^{m+1} = [T_1,T_n]^{m+1} = [(\dots((\underbrace{T_1)_1)_1\dots)_1}_{m \text{ times}}, (\dots((T_n)_{n+1})_{n+2}\dots)_{n+m}]$$ which stabilizes $x$, meaning that $x^{-1} f^{m+1} x \in F$. On the one hand, we expand the triple representing $x$ by adding carets to the last leaf of the tree $m$-times and apply the corresponding sequence of cloning maps so that the group element becomes $$(g_1,g_2, \dots , g_n, \phi(g_n),g_n, \phi (g_n), \dots, \epsilon),$$ where $\epsilon$ is either $g_n$ or $\phi(g_n)$, depending on whether $m$ is even or odd. On the other hand, we expand the triple representing $x^{-1}$ by adding carets to the first leaf of the tree $m$-times and apply the corresponding sequence of cloning maps so that the group element becomes $$(g_1^{-1},\underbrace{\phi (g_1^{-1}), \dots ,\phi (g_1^{-1})}_{(m-1) \text{ times}}, g^{-1}_2, \dots ,g^{-1}_n).$$ Hence, calculating $x^{-1} f_m x$ and inspecting the group element in the triple representing $x^{-1}f_mx$, we see that it equals $$(1, \phi(g_1^{-1}) g_2,\phi(g_1^{-1})g_3,\dots, \phi(g_1^{-1})g_n, \phi(g_1^{-1}) \phi(g_n),\dots,\epsilon'),$$
where $\epsilon '$ is either $1$ or $g_n^{-1}\phi(g_n)$, depending on whether $m$ is even or odd. Since $x^{-1}f_m x$ belongs to $F$, it follows that the above tuple is the identity element and hence $\phi (g_1) = g_2$, $\phi (g_1) = g_3$,\dots, $\phi(g_1) = g_n$, and $\phi (g_1) = \phi(g_n)$. The last two equations imply $\phi (g_n) = g_n$ and hence $g_n = 1$ because $\phi$ is a fixed-point-free automorphism. From here it follows that $g_1 = g_2 = g_3 = \dots = g_n = 1$, and therefore $x=1$. Hence, the inclusion $L(F) \subseteq L(\mathscr{T}(\prod^*(G)))$ satisfies the WAHP. 
\end{proof}

Fixed-point free automorphisms of order two are fairly easy to come by. For example, if $G$ is any non-trivial abelian group without $2$-torsion, then $\phi$ given by $g \mapsto g^{-1}$ is a fixed-point-free automorphism of order two. As the more interesting examples in von Neumann algebra theory tend to come from the non-amenable realm, let us look to a non-amenable example. Note that $G$ being non-amenable ensures $\mathscr{T}(\prod^*(G))$ is non-amenable, independently of whether $F$ is amenable, which is, arguably, the biggest open problem concerning $F$. Of course, more generally, if $G$ is non-amenable, then neither $\mathscr{T}_d(\prod^*(G))$ nor $\mathscr{T}_d(\Psi^*(G))$ is amenable. For a non-amenable example, consider $G = \mathbb{F}_2$ be a free group freely generated by $a$ and $b$, and let $\phi : \mathbb{F}_2 \to \mathbb{F}_2$ be the homomorphic extension of $a \mapsto b$ and $b \mapsto a$. The homomorphism $\phi$ is certainly an isomorphism. In fact, it is its own inverse so it is certainly of order two. Finally, if $g \in \mathbb{F}_2$ is a non-trivial element, then the first letter of $\phi (g)$ and $g$ will be different, meaning they cannot possibly be equal, whereby we conclude $\phi$ is fixed-point-free. Hence, the inclusion $L(F) \subseteq L(\mathscr{T}(\prod^*(\mathbb{F}_2)))$ satisfies the WAHP.

Note that this cloning system on $(\prod^n(G))_{n \in \mathbb{N}}$, or even restricted to $(\Psi^n(G))_{n \in \mathbb{N}}$, is very far from being uniform (see Definition~\ref{uniform}). Indeed, for $n \in \mathbb{N}$ and $1 < k \le n$, $$(g_1,\dots,g_{k-1},g_k,g_{k+1},\dots,g_n)(\kappa_k^n \circ \kappa_k^{n+1}) = (g_1,\dots,g_{k-1},g_k, \phi(g_k), \phi(g_k),g_{k+1},\dots,g_n)$$
while
$$(g_1,\dots,g_{k-1},g_k,g_{k+1},\dots)(\kappa_k^n \circ \kappa_{k+1}^{n+1}) = (g_1,\dots,g_{k-1},g_k,\phi(g_k),g_k,g_{k+1},\dots,g_n).$$
If they were equal, then, in particular, $\phi(g_k) = g_k$. However, since $\phi$ is fixed-point free, it is clear that these tuples are generally not equal. Recall that the uniformity property was crucial in proving that $L(\mathscr{T}_d(G_*))$ can be a type $\II_1$ McDuff factor (see Citation~\ref{mcduff}). Hence, neither $L(\mathscr{T}(\prod^*(G))$ nor $L(\mathscr{T}(\Psi^*(G))$ is obviously a type $\II_1$ McDuff factor, or at least it is not an immediate consequence of Citation~\ref{mcduff}. We wonder, do either yield a type $\II_1$ factor which is either McDuff or has property Gamma?

Since the uniform property precludes mixing (see Observation~\ref{obs:mixing}), but neither cloning system has the uniform property, one might wonder whether $L(F)$ is a mixing subfactor in these examples. It turns out, however, that we do not have mixing. Figure~\ref{fig:mixing} gives two commuting elements, one which belongs to $\mathscr{T}(\prod^*(G))$ and does not belong to $F$ whenever $g \in G$ is non-trivial, while the other element is a non-trivial element belonging to $F$. The fact that these elements commute implies that the inclusion $L(F) \subseteq L(\mathscr{T}(\prod^*(G)))$ is not mixing. We note that this choice of element actually belongs to $\mathscr{T}(\Psi^*(G))$ and hence shows that the inclusion $L(F) \subseteq L(\mathscr{T}(\Psi^*(G)))$ is not mixing either. Since the cloning system on $(\Psi^n(G))_{n \in \mathbb{N}}$ is diverse, Theorem~\ref{thrm:WAHP} tells us that the inclusion $L(F) \subseteq L(\mathscr{T}(\Psi^*(G))$ is weakly mixing though.

\begin{figure}[htb]
 \centering
\begin{tikzpicture}[line width=0.8pt, scale=0.4]
  \node at (-1.5,-1){$\Big[$};
  \draw (-1,-1) -- (0,0) -- (1,-1);
  \filldraw (-1,-1) circle (1.5pt)   (0,0) circle (1.5pt)   (1,-1) circle (1.5pt);
  \node at (1.5,-1){,};
  \node at (3,-1){$(1,g)$};
  \node at (4.5,-1){,};
   \begin{scope}[xshift=2.35in]
   \draw (-1,-1) -- (0,0) -- (1,-1);
   \filldraw (-1,-1) circle (1.5pt)   (0,0) circle (1.5pt)   (1,-1) circle (1.5pt);
   \node at (1.5,-1){$\Big]$};
   \end{scope}

    \node at (9.5,-1) {and};
   
  \begin{scope}[xshift=6.3in]
   \node at (-4,-1.5){$\Bigg[$};
   \draw  (0,-2) -- (-1,-3)  (0,-2) -- (1,-3)  (-2,-2) -- (-1,-1) (0,-2) -- (-1,-1)  (-1,-1) -- (0,0) -- (2,-2) (-1,-1) -- (-3,-3) ;
   \filldraw (-1,-3) circle (1.5pt) (1,-3) circle (1.5pt)  (-3,-3) circle (1.5pt) (0,-2) circle (1.5pt)    (0,0) circle (1.5pt) (-1,-1) circle (1.5pt)  (2,-2) circle (1.5pt);
   \node at (3.25,-2){,};
   \begin{scope}[xshift=2.5in]
    \draw (-3,-3) -- (0,0) -- (2,-2)   (0,-2) -- (-1,-1)  (-2,-2) -- (-1,-3)   (3,-3) -- (2,-2);
    \filldraw  (-3,-3) circle (1.5pt)  (-2,-2) circle (1.5pt)   (0,0) circle (1.5pt)  circle (1.5pt)  (-1,-1) circle (1.5pt) (-1,-3) circle (1.5pt)  (0,-2) circle (1.5pt)  (3,-3) circle (1.5pt);
    \node at (4,-1.5){$\Bigg]$};
   \end{scope}
  \end{scope}

\end{tikzpicture}
\caption[]{A pair of commuting elements showing that neither $L(F) \subseteq L(\mathscr{T}(\prod^*(G)))$ nor $L(F) \subseteq L(\mathscr{T}(\Psi^*(G)))$ is a mixing inclusion. The leftmost element belongs to $\mathscr{T}(\Psi^*(G)) \le \mathscr{T}(\prod^*(G))$, while the rightmost element belongs to $F$.}
\label{fig:mixing}
\end{figure}
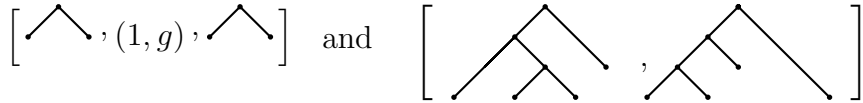

In conclusion, using the direct product examples with monomorphisms, it is quite easy to produce examples of non-diverse $d$-ary cloning systems where the inclusion is still irreducible and satisfies the WAHP.
\end{example}

\section{Singular Inclusions of type $\II_1$ Factors without the Weak Asymptotic Homomorphism Property}\label{singularnotWAHP}

In this section, using $d$-ary cloning systems and amalgamated free products, we construct a variety of singular inclusions of type $\II_1$ factors which do not satisfy the WAHP. As we noted in the introduction, the first example of such an inclusion was constructed in \cite{grossman10}. These do not have the WAHP by virtue of being proper finite index inclusions, which immediately precludes the WAHP. This left open the case of producing an infinite index, singular inclusion of type $\II_1$ factors without the WAHP. In this section, we provide a variety of examples of such infinite index inclusions (and finite index inclusions, too), although we note that these examples were essentially constructed simultaneously with the ones in \cite{bannon23}. However, we note that ours differs in that our constructions are purely group-theoretic. 

Somewhat vaguely, we construct such inclusions by starting with an inclusion of groups with certain desired properties and then ``Thompsonify" the inclusion by passing it through the modified direct product $d$-ary cloning system. We will see that Thompsonifying the inclusion preserves these properties, including the index of the group inclusion. More precisely, given an inclusion $H \le G$ of countable discrete groups, throughout this section we fix $$H_n := \{1\} \times \underbrace{G \times \dots \times G}_{n-2 \text{ times}} \times H$$ and $G_n := \Psi^n(G)$ for $n \ge 3$, and we consider the sequence of groups $(G_n)_{n \in \mathbb{N}}$ with its usual $d$-ary cloning system with all the monomorphisms $\phi_1, \dots , \phi_d : G \to G$ the identity. The $d$-ary cloning system respects the sequence of subgroups $(H_n)_{n \in \mathbb{N}}$, meaning we can restrict the $d$-ary cloning system to $(H_n)_{n \in \mathbb{N}}$ to get a $d$-ary cloning subsystem and from this we obtain the inclusion $F_d \le \mathscr{T}_d(H_*) \le \mathscr{T}_d(G_*)$.  Recall that the $d$-ary cloning system on $(G_n)_{n \in \mathbb{N}}$ is pure, uniform, and diverse and hence so is the $d$-ary cloning system restricted to $(H_n)_{n \in \mathbb{N}}$. Because the $d$-ary cloning system on $(G_n)_{n \in \mathbb{N}}$ is diverse, we know by Theorem \ref{thrm:Irreducible} that the inclusion $L(F_d) \subseteq L(\mathscr{T}_d(G_*))$ is irreducible and hence $L(\mathscr{T}_d(H_*)) \subseteq L(\mathscr{T}_d(G_*))$ must also be irreducible.  This means that the normalizer of $L(\mathscr{T}_d(H_*))$ in $L(\mathscr{T}_d(G_*))$ can be described in terms of the normalizer of $\mathscr{T}_d(H_*)$ in $\mathscr{T}_d(G_*)$ as outlined in Citation \ref{SmithNormalizer}. 

For this particular $d$-ary cloning system we have the following lemma which says that Thompsonifying the inclusion $H \le G$ in the above way preserves the index.

\begin{lemma}\label{lem:index}
Let $H \le G$ be an inclusion of groups, and define $H_n$ and $G_n$ as above equipped with the $d$-ary cloning system described above. Then 

$$|\mathscr{T}_d(G_*) : \mathscr{T}_d(H_*)| = |G:H|.$$
\end{lemma}
\begin{proof}
First, note that 
\begin{align*}
|\mathscr{T}_d(G_*) : \mathscr{T}_d(H_*)| &=  |\mathscr{K}_d(G_*) \rtimes F_d : \mathscr{K}_d(H_*) \rtimes F_d| \\
&= |\mathscr{K}_d(G_*) : \mathscr{K}_d(H_*)|
\end{align*}
Of course, this is true more generally of any pure $d$-ary cloning system on a sequence of groups on $(G_n)_{n \in \mathbb{N}}$ and a sequence of subgroups $(H_n)_{n \in \mathbb{N}}$ which ``respects" the cloning system. Hence, it suffices to argue that the index of $\mathscr{K}_d(H_*)$ in $\mathscr{K}_d(G_*)$ is $|G:H|$. To this end, let $S \subseteq G$ be a set of distinct left-coset representatives of $H$ in $G$. We claim that all the elements of the form $$[\Lambda_d, (\underbrace{1,\dots,1}_{d-1},x),\Lambda_d]~~~~(\ast)$$
for $x \in S$, of which there are $|S| = |G:H|$ of them, form all the distinct representatives for left cosets in $\mathscr{K}_d(G_*)/\mathscr{K}_d(H_*)$, where $\Lambda_d$ is the $d$-ary caret. To see this, let $[T,(1,g_2,\dots,g_n),T]$ represent some arbitrary element in $\mathscr{K}_d(G_*)/\mathscr{K}_d(H_*)$ for some $d$-ary tree $T$ with $n$ leaves and elements $g_2, \dots ,g_n \in G$, and we will argue it is equivalent to something of the form $(\ast)$ up to expansion and modulo $\mathscr{K}_d(H_*)$. Given $g_n \in G$, there exists some left-coset representation $x \in S$ and some $h \in H$ such that $g_n = xh$. Then
\begin{align*}
[T,(1,g_2,\dots,g_n),T] &= [T,(1,g_2, \dots ,g_{n-1},h),T] \\
&= [T,(1,1,\dots,1,x),T][T,(1,g_2,\dots,g_{n-1},h)] \\
\end{align*}
Modulo $\mathscr{K}_d(H_*)$, the element $[T,(1,g_2,\dots,g_n),T]$ is equivalent to $$[T,(1,\dots,1,x),T]$$
in the quotient $\mathscr{K}_d(G_*)/\mathscr{K}_d(H_*)$. Since any $d$-ary tree can be inductively constructed from the $d$-ary caret $\Lambda_d$,
the $d$-ary tree $T$ is obtainable from $\Lambda_d$ via a finite number of appropriate expansions. In general, without knowing precisely what $T$ looks like, we do not know which expansions we need to perform in order to obtain $T$ from $\Lambda_d$. But in this specific case that will pose no issue. We can expand $$[\Lambda_d, (1,\dots,1,x),\Lambda_d]$$
so that the $d$-ary caret $\Lambda_d$ becomes $T$, and this will require applying the appropriate sequence of $d$-ary cloning maps the $d$-tuple $$(1,\dots,1,x)$$ in accordance with the expansions needed to obtain $T$ from $\Lambda_d$. Doing this, the element $[\Lambda_d, (1,\dots,1,x),\Lambda_d]$ is equivalent to $$[T,(1,\dots,1,\underbrace{x,\dots,x},x),T]$$
where the underlined block of $x$'s might be empty depending on whether we need to clone the last coordinate at any point in expanding $\Lambda_d$ to obtain $T$. If the block is empty, then we are finished as we just showed that any element of $\mathscr{K}_d(G_*)/\mathscr{K}_d(H_*)$ is equivalent to an element represented by $(\ast)$ for some $x \in S$ via expansions and reduction modulo $\mathscr{K}_d(H_*)$. However, if the block is \textit{not} empty, then the element can be written as $$[T,(1,\dots,1,\underbrace{x,\dots,x},x),T] = [T,(1,\dots,1,\underbrace{x,\dots,x},1),T][T,(1,\dots,1,x),T]$$
and modding out in the quotient $\mathscr{K}_d(G_*)/\mathscr{K}_d(H_*)$ this is clearly equivalent to $$[T,(1,\dots,1,x),T]$$
Hence, using expansions and reduction modulo $\mathscr{K}_d(H_*)$, this shows that any element of $\mathscr{K}_d(G_*)/\mathscr{K}_d(H_*)$ can be represented by an element of the form $$[\Lambda_d, (\underbrace{1,\dots,1}_{d-1},x_i),\Lambda_d]$$ for some $x \in S$,
of which there are $|S| = |G:H|$ them, and we note that these elements are clearly pairwise distinct. Whence it follows $$|\mathscr{K}_d(G_*) : \mathscr{K}_d(H_*)|= |G:H|$$
\end{proof}

Next, we need to a lemma which allows us describe the group normalizer of $\mathscr{T}_d(H_*)$ in $\mathscr{T}_d(G_*)$.
\begin{lemma}\label{lem:normalizer}
Let $H \le G$ be an inclusion of groups, and define $H_n$ and $G_n$ as above equipped with the $d$-ary cloning system described above. If $x \in \mathscr{T}_d(G_*)$ normalizes $\mathscr{T}_d(H_*)$, then $x = [T,(1,g_2, \dots ,g_n), U]$ for some $d$-ary trees $T$ and $U$ with $n$ leaves and $g_2, \dots ,g_n \in G$ with $g_n \in \mathcal{N}_G(H)$. In particular, if $H$ is self-normalizing in $G$, then $\mathscr{T}_d(H_*)$ is self-normalizing in $\mathscr{T}_d(G_*)$
\end{lemma}
\begin{proof}
Suppose that $x \in \mathscr{T}_d(G_*)$ normalizes $\mathscr{T}_d(H_*)$ and write $x = [T,(1,g_2, \dots ,g_n),U]$ for some $d$-ary trees $T$ and $U$ with $n$ leaves and $g_2, \dots ,g_n \in G$. Let $h \in H$ be arbitrary and consider the element $[T,(1,1, \dots ,1,h),T]$ in $\mathscr{T}_d(H_*)$. Then, because $x$ normalizes $\mathscr{T}_d(H_*)$, we know that 

\begin{align*}
[T,(1,g_2, \dots ,g_n),U]^{-1} [T,(1,1, \dots ,1,h),T] [T,(1,g_2, \dots ,g_n),U] &= [T,(1,g_2, \dots ,g_n^{-1} h g_n), T] \\
\end{align*}
belongs to $\mathscr{T}_d(H_*)$ which means that $g_n^{-1} h g_n \in H$. Reversing the positions of $x$ and $x^{-1}$, we can show that $g_n h g_n^{-1} \in H$. Hence, because $h \in H$ was arbitrary, it follows that $g_n \in \mathcal{N}_G(H)$. 

Finally, if $H$ is self-normalizing in $G$, then $g_n \in \mathcal{N}_G(H) = H$ and therefore $x \in \mathscr{T}_d(H_*)$, which means $\mathscr{T}_d(H_*)$ is self-normalizing in $\mathscr{T}_d(G_*)$ and therefore completes the proof.
\end{proof}
 
Mimicking the calculations above, it is not terribly difficult to show that we can get a similar description of the normalizer of $\mathscr{T}_d(\Psi^*(H))$ in $\mathscr{T}_d(\Psi^*(G))$, or even $\mathscr{T}_d(\prod^*(H))$ in $\mathscr{T}_d(\prod^*(G))$. For example, in the latter case, if $x \in \mathscr{T}_d(\prod^*(G))$ normalizes $\mathscr{T}_d(\prod^*(H))$, then $x = [T,(g_1, \dots ,g_n), U]$ for some $d$-ary trees $T$ and $U$ and $g_1, \dots ,g_n \in \mathcal{N}_G(H)$. 

We immediately obtain the following corollary. 

\begin{corollary}\label{cor:singular}
Let $H \le G$ be an inclusion of groups, and define $H_n$ and $G_n$ as above equipped with the $d$-ary cloning system described above. If $H$ is self-normalizing in $G$, then $L(\mathscr{T}_d(H_*) \subseteq L(\mathscr{T}_d(G_*))$ is a singular inclusion of type $\II_1$ factors. 
\end{corollary}
\begin{proof}
If $H$ is self-normalizing in $G$, then Lemma \ref{lem:normalizer} tells us that $\mathscr{T}_d(H_*)$ is self-normalizing in $\mathscr{T}_d(G_*)$. Because the inclusion $L(\mathscr{T}_d(H_*)) \subseteq L(\mathscr{T}_d(G_*))$ is an irreducible of type $\II_1$ factors, the normalizer of $L(\mathscr{T}_d(H_*))$ in $L(\mathscr{T}_d(G_*))$ can be computed from the normalizer of $\mathscr{T}_d(H_*)$ in $\mathscr{T}_d(G_*)$. As a matter of fact, using Citation \ref{SmithNormalizer}, it is clear that the normalizer of $L(\mathscr{T}_d(H_*))$ in $L(\mathscr{T}_d(G_*))$ is simply $\mathcal{U}(L(\mathscr{T}_d(H_*)))$, which means that the normalizer generates $L(\mathscr{T}_d(H_*))$ and therefore the inclusion is singular. Hence, $L(\mathscr{T}_d(H_*)) \subseteq L(\mathscr{T}_d(G_*))$ is a singular inclusion of type $\II_1$ factors. 
\end{proof}

Again, the above corollary is true if instead you consider the Thompsonifications $\mathscr{T}_d(\Psi^*(H)) \le \mathscr{T}_d(\Psi^*(G))$ or $\mathscr{T}_d(\prod^*(H)) \le \mathscr{T}_d(\prod^*(G))$.

Using Lemma \ref{lem:index} and Corollary \ref{cor:singular}, we can already easily construct singular inclusions of type $\II_1$ factors which do not satisfy the WAHP. Indeed, if $H \le G$ is any finite index inclusion of groups with $H$ self-normalizing in $G$, Lemma \ref{lem:index} and Corollary \ref{cor:singular} tell us that $L(\mathscr{T}_d(H_*)) \subseteq L(\mathscr{T}_d(G_*))$ is a singular inclusion of type $\II_1$ factors which is finite index and hence the WAHP is precluded. We note that there are is abundance of such group inclusions. For example, let $n \ge 3$ and let $\text{stab}(i)$ be the subgroup of $S_n$ fixing $i \in \{1, \dots , n\}$. Then if $G = S_n$ and $H = \text{stab}(i)$, we have $H$ is self-normalizing in $G$, so Thompsonifying $H \le G$ gives us such an inclusion. For an example guaranteed to be non-amenable, let $G = \mathbb{F}_2 = \langle a,b \rangle$ and let $H$ be the preimage of the subgroup $\text{stab}(i)$ under the homomrphism $\mathbb{F}_2 \to S_n$ induced by $a \mapsto (1~2~ \dots ~n)$ and $b \mapsto (1~2)$. Then $H \le G$ is a finite index inclusion with $H$ self-normalizing in $G$, so Thompsonifying it gives us such an inclusion. Many other examples can be constructed in a similar fashion. 

As we noted, the examples constructed above do not satisfy the WAHP by virtue of the inclusion being finite index. We now construct singular inclusions of type $\II_1$ factors not satisfying the WAHP but which are of infinite index. To do this, we need a few more lemmas, which will enable us to construct a variety of such inclusions. 

\begin{lemma}\label{lem:notWAHP}
Let $H \le G$ be an inclusion of groups, and define $H_n$ and $G_n$ as above equipped with the $d$-ary cloning system described above. If $H \le G$ does not satisfy the right coset covering property, then $\mathscr{T}_d(H_*) \le \mathscr{T}_d(G_*)$ does not satisfy the right coset covering property. 
\end{lemma}
\begin{proof}
If $H \le G$ does not satisfy the coset covering property, then there exists $g \in G \setminus H$ and $y_1,...,y_t \in G$ such that 

$$Hg \subseteq \bigcup_{i=1}^{t} y_i H.$$

Note that $x := [\Lambda_d , (1,1, \dots ,1,g), \Lambda_d]$ is an element of $\mathscr{T}_d(G_*)$ which does not belong to $\mathscr{T}_d(H_*)$. Define $x_i = [\Lambda_d , (1,1,\dots,y_i), \Lambda_d]$ for $i=1,\dots,t$. We claim that

$$\mathscr{T}_d(H_*) x \subseteq \bigcup_{i=1}^{t} x_i \mathscr{T}_d(H_*),$$
which would prove that $\mathscr{T}_d(H_*) \le \mathscr{T}_d(G_*)$ does not have the right coset covering property.
To this end, let $[T,(1,g_2, \dots,g_{n-1},h),U]$ be an arbitrary element of $\mathscr{T}_d(H_*)$ for some $d$-ary trees $T$ and $U$ with $n$ leaves and some $h \in H$. Since $hg \in Hg$, there exists some $h' \in H$ such that $hg = y_ih'$ for some $i=1, \dots ,t$. Hence,

\begin{align*}
[T,(1,g_2,\dots,g_{n-1},h),U][\Lambda_d, (1,1,\dots,1,g) \Lambda_d] &= [T,(1,g_2,\dots,g_{n-1},h),U][U, (1,1,\dots,1 \underbrace{g,\dots,g,}g),U] \\
&= [T,(1,g_2 \epsilon_2 , g_3 \epsilon_3 ,\dots,g_{n-1} \epsilon_{n-1}, hg),U] \\
&= [T,(1,g_2 \epsilon_2 , g_3 \epsilon_3 ,\dots,g_{n-1} \epsilon_{n-1}, y_i h'),U] \\
&= [T,(1, \dots ,1, \underbrace{y_i,\dots,y_i,} y_i, T] \\
& [T, (1, \delta_2 g_2 \epsilon_2,\dots, \delta_{n-1} g_{n-1} \epsilon_{n-1},h'),U] \\
&= [\Lambda_d,(1,\dots,1,y_i),\Lambda_d] \\
&[T, (1, \delta_2 g_2 \epsilon_2,\dots, \delta_{n-1} g_{n-1} \epsilon_{n-1},h'),U] \\
&= x_i [T, (1, \delta_2 g_2 \epsilon_2,\dots, \delta_{n-1} g_{n-1} \epsilon_{n-1},h'),U]
\end{align*}
where $\epsilon_j \in \{1,g\}$ and $\delta_j \in \{1, y_i^{-1}\}$ for $j=2,\dots,n-1$, depending upon whether we need to clone the last coordinate in the process of expanding triples to build $T$ and $U$ from $\Lambda_d$. Note that some of the products had to be written on two lines. As in the proof of Lemma \ref{lem:index}, the underlined portions could possibly empty if we never need to clone the last coordinate in the process of building $T$ and $U$ from $\Lambda_d$ via expansions. In any case, note that this lies in $x_i \mathscr{T}_d(H_*)$, which completes the proof.
\end{proof}

Finally, we have an easy lemma concerning amgalmated free products. 

\begin{lemma}\label{lem:amalgamation}
Let $\Gamma_1$ be any finite group, $\Gamma_2$ any group, and $\Sigma$ any common proper subgroup. Then $\Gamma_1$ is self-normalizing in the amalgamated free product $\Gamma_1 \ast_{\Sigma} \Gamma_2$, and the inclusion $\Gamma_1 \le \Gamma_1 \ast_{\Sigma} \Gamma_2$ is an infinite index inclusion which does not satisfy the right coset covering property.
\end{lemma}
\begin{proof}
That $\Gamma_1$ is self-normalizing in $\Gamma_1 \ast_{\Sigma} \Gamma_2$ a fairly standard fact about amalgamated free products which can be gleaned from its action on its Bass-Serre tree or by considering normal forms of elements, and is independent of $\Gamma_1$ being finite. To see that the inclusion is of infinite index is also fairly easy to see: if $x \in \Gamma_1 \setminus \Sigma$ and $y \in \Gamma_2 \setminus \Sigma$ are any elements, then clearly $\{(xy)^n : n \in \mathbb{Z} \}$ are a set of distinct left coset representatives of $\Gamma_1$ in $\Gamma_1 \ast_{\Sigma} \Gamma_2$. Finally, the inclusion does not have the right coset covering property because, given $z \in \Gamma_1y$, where $y \in \Gamma_2 \setminus \Sigma$ is any element, we have $z = gy$ for some $g \in \Gamma_1$ or

$$z = gy = gy g^{\text{ord}(g)-1} g \in ghg^{\text{ord}(g)-1} \Gamma_1$$
which proves that 
$$\Gamma_1 y \subseteq \bigcup_{g \in \Gamma_1} gyg^{\text{ord}(g)-1} \Gamma_1,$$
where $\text{ord}(g)$ denotes the order of the group element $g$, which is necessarily finite because $\Gamma_1$ is a finite group. Since $y \notin \Gamma_1$ and the right coset $\Gamma_1 y$ is covered by a finite number of left cosets of $\Gamma_1$, the inclusion $\Gamma_1 \le \Gamma_1 \ast_{\Sigma} \Gamma_2$ does not have the right coset covering property. 
\end{proof}

For example, the inclusion $\mathbb{Z}_4 \le \mathbb{Z}_4  \ast_{\mathbb{Z}_2} \mathbb{Z}_6 \cong \text{SL}_2(\mathbb{Z})$ satisfies the conclusion of Lemma \ref{lem:amalgamation}, where $\text{SL}_2(\mathbb{Z})$ is the $2 \times 2$ special linear group over $\mathbb{Z}$. We also have the case when $A = \{e\}$, which reduces the amalgamated free product to the ordinary free product. In particular, we have that inclusions $\mathbb{Z}_2 \le \mathbb{Z}_2 \ast \mathbb{Z}_2 \cong D_{\infty}$ and $\mathbb{Z}_2 \le \mathbb{Z}_2 \ast \mathbb{Z}_3 \cong \text{PSL}_2(\mathbb{Z})$, where $D_{\infty}$ is the infinite dihedral group and $\text{PSL}_2(\mathbb{Z}) := \text{SL}_2(\mathbb{Z})/\{\pm I_2\}$ is the $2 \times 2$ projective special linear group with integer coefficients, or $\Gamma \le \Gamma \ast \mathbb{Z}$ and $\Gamma \le \Gamma \ast \mathbb{F}_2$ for any finite group $\Gamma$ satisfy the hypotheses of Lemma \ref{lem:amalgamation}.

How do these lemmas and the corollary help us in constructing a singular, infinite index inclusions of type $\II_1$ factors without the WAHP? Using Lemma \ref{lem:index}, Corollary \ref{cor:singular}, Lemma \ref{lem:notWAHP}, and Lemma \ref{lem:amalgamation}, we can construct a variety of examples, including the ones mentioned in the preceding paragraph. Let $\Gamma_1$, $\Gamma_2$, and $\Sigma$ satisfy the hypotheses of Lemma \ref{lem:amalgamation}, and define $H = \Gamma_1$ and $G = \Gamma_1 \ast_{\Sigma} \Gamma_2$. Then by Lemma \ref{lem:index}, Lemma \ref{lem:normalizer}, Lemma \ref{lem:notWAHP}, and Lemma \ref{lem:amalgamation}, we see that $\mathscr{T}_d(H_*)$ is self-normalizing in $\mathscr{T}_d(G_*)$ and the inclusion $\mathscr{T}_d(H_*) \le \mathscr{T}_d(G_*)$ is an infinite index inclusion which does not satisfy the right coset covering condition. Because not satisfying the right coset covering property is equivalent to not satisfying the WAHP, when we translate the group inclusion to an inclusion of their group von Neumann algebras, Corollary \ref{cor:singular} and Lemma \ref{lem:notWAHP} tell us that $L(\mathscr{T}_d(H_*)) \subseteq L(\mathscr{T}_d(G_*))$ is an infinite index inclusion of $\II_1$ factors which is singular but does not satisfy the WAHP. Although somewhat besides the point, we note that the factors considered in this section, both $L(\mathscr{T}_d(H_*))$ and $L(\mathscr{T}_d(G_*))$, are McDuff.

One might wonder why we did not use the inclusion $L(F_d) \subseteq L(\mathscr{T}_d(G_*))$ to construct examples of such inclusions. The reason for this is that  that so many of the $d$-ary cloning systems are diverse, so it seems unlikely, and indeed would be surprising, that there exists a sequence of groups $(G_n)_{n \in \mathbb{N}}$ equipped with a $d$-ary cloning system such that $L(F_d) \subseteq L(\mathscr{T}_d(G_*))$ is a singular inclusion of type $\II_1$ factors without the WAHP. Necessarily, the $d$-ary cloning system would have to be non-diverse. But we saw that even non-diverse $d$-ary cloning systems give rise to inclusions satisfying the WAHP, or if it does not have the WAHP, it did not have it because it was non-singular or not irreducible. We leave it to future work to determine if this is possible.

\section{The Higman--Thompson Groups $F_d$ are McDuff}\label{sec:higmanmcduff}

In this section, we use Theorem \ref{thrm:Irreducible}, Citation~\ref{mcduff}, and character rigidity of the Higman--Thompson groups $F_d$ to deduce that the $F_d$ are McDuff groups for all $d \ge 2$. First, let us recall the notion of a McDuff group, first singled out by Deprez and Vaes in \cite{deprez18}. A group $G$ is said to be \emph{McDuff} provided it admits a free, ergodic, probability measure preserving (p.m.p.) action $\sigma$ on a standard probability measure space $(X, \mu)$ such that the corresponding crossed product von Neumann algebra $L^{\infty}(X,\mu) \rtimes_{\sigma} G$ is a type $\II_1$ McDuff factor. It turns out that a group $G$ being McDuff in the sense of Deprez-Vaes implies that $G$ is inner amenable (see \cite{deprez18} and \cite{choda}). Hence for a group $G$, if either $L(G)$ is a type $\II_1$ McDuff factor, or $G$ is a McDuff group in the above sense, $G$ is inner amenable. These two properties, therefore, bear the same relationship to inner amenability, but the relationship between them is not entirely clear as far as we can tell. Part of this relationship was already clarified when Kida constructed an ICC group $G$ that is McDuff in the above sense yet is such that $L(G)$ does not have property Gamma and therefore cannot be a McDuff factor (see \cite{kida15}). But as far as we can tell, it is unknown whether $L(G)$ being a type $\II_1$ McDuff factor implies $G$ is a McDuff group. 

Let us recall what it means for a group $G$ to be character rigid. For an excellent reference on character rigidity, we point the interested reader to the survey \cite{peterson2016lecture}. Recall that $\tau : G \to \mathbb{C}$ is a \textit{character} in the operator-algebraic or representation-theoretic sense if
\begin{enumerate}[1.]
    \item $\tau (e) = 1$, \\
    \item $[\tau (g_j^{-1} g_i)]_{1 \le i,j \le n}$ is a non-negative definite $n \times n$ matrix for all $g_1,\dots,g_n \in G$ and $n \in \mathbb{N}$, and \\
    \item $\tau (g_1g_2) = \tau (g_2 g_1)$ for all $g_1,g_2 \in G$.
\end{enumerate} 
The space of characters on $G$ forms a $\text{weak}^*$-closed convex subset of $\ell^{\infty}(G)$. A character $\tau : G \to \mathbb{C}$ is said to be \emph{almost periodic} if the set of translates $\{x \mapsto \tau (gx)\}_{g \in G}$ is uniformly precompact in $\ell^{\infty}(G)$. With this in mind, a group $G$ is said to be \emph{character rigid} if whenever a character $\tau : G \to \mathbb{C}$ is an extreme point in the space of characters on $G$, either $\tau$ is almost periodic or $\tau = \delta_e$. A key dynamical property of character rigid groups is that any ergodic probability measure preserving action on a standard probability measure space is necessarily also free, and it is this property, in particular, we will exploit to prove the groups $F_d$ are McDuff in the above sense. 

Via the Gelfand-Naimark-Segal (GNS) construction, characters of a group $G$ are in one-to-one correspondence with unitary representation of $G$ into tracial von Neumann algebras, and a character is an extreme point in the space of characters if and only if the corresponding tracial von Neumann algebra is additionally a factor (see \cite[Theorem~5.7]{peterson2016lecture}). Given a unitary representation $\pi : G \to \mathcal{U}(\h)$ of $G$, where $\h$ is some separable Hilbert space, $\pi$ is said to be a \emph{finite factor representation} if the von Neumann algebra generated by $\pi(G)$ is a finite factor von Neumann algebra (finite factors coincide with tracial von Neumann algebras). In \cite{dudko}, Dudko and Medynets characterized all finite factor representations of ``extended" Higman--Thompson groups $\{F_{d,r}\}_{d,r \in \mathbb{N}, d \ge 2}$ and $\{V_{d,r}\}_{d,r \in \mathbb{N}, d \ge 2}$, and from this characterization it follows that the Higman--Thompson groups $\{F_{d,r}\}_{d,r \in \mathbb{N}, d \ge 2}$ and $\{V_{d,r}\}_{d,r \in \mathbb{N}, d \ge 2}$ are character rigid (in their notation, $G_{d,r} = V_{d,r}$). This entails, of course, they have the aforementioned dynamical property---namely, if $G \in \{F_{d,r} \}_{d,r \in \mathbb{N}, d \ge 2} \cup \{V_{d,r}\}_{d,r \in \mathbb{N}, d \ge 2}$ admits a non-trivial, ergodic action on a standard probability measure space, then the action is necessarily free. In particular, these results hold for $F_d = F_{d,1}$. This key  dynamical property, combined with Citation \ref{mcduff} and Theorem \ref{thrm:Irreducible}, can be used to prove the Higman--Thompson groups $F_d$ are McDuff.

\begin{theorem}\label{thrm:higmanmcduff}
The Higman--Thompson group $F_d$ is McDuff for all $d \ge 2$.
\end{theorem}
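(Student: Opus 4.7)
The plan is to exhibit a free, ergodic, probability-measure-preserving action of $F_d$ on a standard probability space $(X,\mu)$ whose crossed product $L^\infty(X,\mu) \rtimes F_d$ is a type $\II_1$ McDuff factor. The strategy is to manufacture such an action as the abelian ``core'' of a carefully chosen Thompson-like group factor, and then to use character rigidity of $F_d$ to promote ergodicity to freeness.

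Concretely, pick any non-trivial countable abelian group $G$ (for instance $G = \mathbb{Z}$) and equip $(\Psi^n(G))_{n \in \mathbb{N}}$ with the cloning maps arising from identity monomorphisms $\phi_1 = \cdots = \phi_d = \mathrm{id}_G$. As already observed earlier in the paper, this cloning system is pure, uniform, and diverse. Purity yields the internal semidirect decomposition $\mathscr{T}_d(\Psi^*(G)) = \mathscr{K}_d(\Psi^*(G)) \rtimes F_d$, and because each $\Psi^n(G)$ is abelian and, by purity, the cloning maps are group homomorphisms, $\mathscr{K}_d(\Psi^*(G))$ is abelian. Hence $L(\mathscr{K}_d(\Psi^*(G)))$ is an abelian separable tracial von Neumann algebra, which we identify with $L^\infty(X,\mu)$ for some standard probability space $(X,\mu)$, and
\[
L(\mathscr{T}_d(\Psi^*(G))) \;\cong\; L^\infty(X,\mu) \rtimes F_d,
\]
where the $F_d$-action on $L^\infty(X,\mu)$ comes from conjugation on $\mathscr{K}_d(\Psi^*(G))$, hence is trace-preserving and dualizes to a p.m.p.\ action of $F_d$ on $(X,\mu)$.

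Now Citation~\ref{mcduff} applies, since the cloning system is fully compatible, slightly pure, uniform, and diverse, so $L(\mathscr{T}_d(\Psi^*(G)))$ is a type $\II_1$ McDuff factor. Theorem~\ref{thrm:Irreducible} gives irreducibility of the inclusion $L(F_d) \subseteq L(\mathscr{T}_d(\Psi^*(G)))$, and since $F_d$ is ICC, the ICC part of Citation~\ref{crossedproduct} forces $L^\infty(X,\mu)^{F_d} = \mathbb{C}$; equivalently, the p.m.p.\ action of $F_d$ on $(X,\mu)$ is ergodic. Character rigidity of $F_d$ (in the sense of Peterson, established via Dudko--Medynets' classification of finite factor representations of $F_{d,r}$) then implies that any ergodic p.m.p.\ action of $F_d$ on a diffuse standard probability space is essentially free. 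Since $\mathscr{K}_d(\Psi^*(G))$ is infinite, the measure $\mu$ is diffuse, so our action is free. We therefore have produced a free ergodic p.m.p.\ action of $F_d$ whose crossed product is a type $\II_1$ McDuff factor, which by definition means $F_d$ is McDuff in the sense of Deprez--Vaes.

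The main delicacy lies in deducing essential freeness of the action from character rigidity. Because $F_d$ has nontrivial abelianization $\mathbb{Z}^d$, it carries many almost-periodic extremal characters, and correspondingly many normal subgroups that could a priori appear as stabilizers in a non-free ergodic action. The key point to verify is the general implication (in the Peterson--Creutz framework on invariant random subgroups) that character rigidity, together with diffuseness of $(X,\mu)$, rules out nontrivial stabilizers almost everywhere; this is exactly what converts the ergodicity obtained from irreducibility into the freeness required by the Deprez--Vaes definition.
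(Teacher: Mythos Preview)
Your proposal is correct and takes essentially the same approach as the paper: choose a pure, diverse, uniform $d$-ary cloning system on non-trivial abelian groups (the paper states this abstractly and then names $\Psi^n(G)$ with identity monomorphisms as the concrete witness, whereas you start from that example directly), invoke Citation~\ref{mcduff} for McDuffness of the crossed product, Theorem~\ref{thrm:Irreducible} together with the ICC clause of Citation~\ref{crossedproduct} for ergodicity, and character rigidity of $F_d$ (via Dudko--Medynets) for freeness. The only cosmetic differences are that the paper realizes the abelian core explicitly as $L^\infty(\widehat{\mathscr{K}_d(G_*)})$ via Pontryagin duality rather than a generic $L^\infty(X,\mu)$, and that the paper simply asserts the ergodicity-to-freeness implication as a standard dynamical consequence of character rigidity without the caveat you raise in your final paragraph.
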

\begin{proof}
First, let $(G_n)_{n \in \mathbb{N}}$ be any sequence of non-trivial abelian groups equipped with a pure, diverse, and uniform $d$-ary cloning system. Because the $d$-ary cloning system is pure, we get the internal semi-direct product decomposition $\mathscr{T}_d(G_*) = \mathscr{K}_d(G_*) \rtimes F_d$ which translates to a crossed product decomposition when we pass to its group von Neumann algebra. Because the $d$-ary cloning system is pure, uniform, and diverse, we know by Citation \ref{mcduff} that $L(\mathscr{T}_d(G_*))$ is a type $\II_1$ McDuff factor.

Moreover, we know that the action of $F_d$ on $\widehat{\mathscr{K}_d(G_*)}$ is probability measure preserving. This is because $F_d$ acts on $\mathscr{K}_d(G_*)$ via conjugation which translates to the induced action on $L(\mathscr{K}_d(G_*))$ being trace-preserving which translates to the action on $L^{\infty}(\mathscr{K}_d(G_*))$ being integral-preserving and, finally, this translates to the action of $F_d$ on $\widehat{\mathscr{K}_d(G_*)}$ being probability measure preserving. 

Because the $d$-ary cloning system is diverse, we know by Theorem \ref{thrm:Irreducible} that $L(F_d)$ is an irreducible subfactor of the type $\II_1$ McDuff factor $$L(\mathscr{T}_d(G_*)) \cong L(\mathscr{K}_d(G_*)) \rtimes F_d \cong L^{\infty}(\widehat{\mathscr{K}_d(G_*)}) \rtimes F_d.$$ Since $F_d$ is an ICC group, irreducibility is equivalent to $F_d$ acting ergodically on $L^{\infty}(\widehat{\mathscr{K}_d(G_*)})$ and hence on $\widehat{\mathscr{K}_d(G_*)}$. By virtue of character rigidity of $F_d$, we know that this probability measure preserving ergodic action on $\widehat{\mathscr{K}_d(G_*)}$ must also be free. Hence, putting all of this together, we have constructed a standard probability measure space on which $F_d$ admits a free, ergodic, probability measure preserving action such that the corresponding crossed product $$L^{\infty}(\widehat{\mathscr{K}_d(G_*)}) \rtimes F_d$$
is a McDuff type $\II_1$ factor. Whence, it follows $F_d$ is a McDuff group.
 \end{proof}

For an example of the $d$-ary cloning system used in the proof of the above theorem to witness McDuffness of $F_d$, consider $G_n := \Psi^n(G)$ with $G$ any non-trivial abelian group (even as simple as $G = \mathbb{Z}_2$) and the monomorphisms $\phi_1, \dots , \phi_d$ all equal to the identity. This $d$-ary cloning system is pure, diverse, and uniform so by Citation~\ref{mcduff} we know that $L(\mathscr{T}_d(G_*))$ is a type $\II_1$ McDuff factor. Although the Higman--Thompson groups $F_d$ are McDuff, we note that the other Higman--Thompson groups $T_d$ and $V_d$ cannot be McDuff since they are not inner amenable (see \cite{bashwinger1}).

From one perspective, this approach to proving $F_d$ is McDuff is especially nice because it obviates having to explicitly construct a probability measure space on which $F_d$ admits a free, ergodic, and p.m.p. action, and then checking whether the corresponding crossed product is a type $\II_1$ McDuff factor, all of which can be difficult in general. It is noteworthy that none of the natural spaces on which $F_d$ acts (the unit interval $[0,1]$, the unit circle $S^1$, the $d$-ary Cantor space $\mathcal{C}_d$, etc.) are p.m.p actions. Moreover, this approach to proving $F_d$ is a McDuff group uses the same criteria used to show that $L(F_d)$, in addition to other Thompson-like group factors, is a McDuff.

From another perspective, however, these can be seen as disadvantages of the proof: it is in a sense non-constructive, bordering on abstract nonsense; it does not give us very much insight into proving a given group is McDuff; and it does very little in the way of illuminating the relationship between an ICC group $G$ being McDuff in the sense of Deprez-Vaes and $L(G)$ being a type $\II_1$ McDuff factor. In addition, we suspect that other Thompson-like groups, particularly those arising from (slightly) pure $d$-ary cloning systems, should yield McDuff groups, but the above proof does not obviously generalize. With that being said, the proof does does establish that $F_d$ is McDuff and lends some evidence to other Thompson-like groups possibly being McDuff, and we leave it to future work to explore this.    

We also remark that the techniques used in the proof of Theorem~\ref{thrm:higmanmcduff} can be used to show that $L(\mathscr{K}_d(G_*))$ can give rise to a Cartan subalgebra in $L(\mathscr{T}_d(G_*))$. Recall that a Cartan subalgebra in a von Neumann algebra is a regular maximal abelian subalgebra. More precisely, using those techniques we can prove that if $(G_n)_{n \in \mathbb{N}}$ is a sequence of non-trivial abelian groups equipped with a pure and diverse $d$-ary cloning system (the uniform assumption is superfluous in this case), then $L(\mathscr{K}_d(G_*))$ is a Cartan subalgebra of $L(\mathscr{T}_d(G_*))$. Indeed, from the diversity assumption we know that the inclusion $$L(F_d) \subseteq L(\mathscr{T}_d(G_*)) \cong L(\mathscr{K}_d(G_*)) \rtimes F_d$$ is irreducible by Theorem \ref{thrm:Irreducible} which translates to the action of $F_d$ on the abelian von Neumann algebra $L(\mathscr{K}_d(G_*))$ being ergodic and hence free by character rigidity. For crossed product decompositions like this, it is a standard fact that the action being free is equivalent to $L(\mathscr{K}_d(G_*))$ being a Cartan subalgebra of $L(\mathscr{K}_d(G_*)) \rtimes F_d \cong L(\mathscr{T}_d(G_*))$. Hence, we have the following theorem.

\begin{theorem}\label{cartan}
Let $((\rho_n)_{n \in \mathbb{N}}, (\kappa_k^n)_{k \le n}))$ be a pure and diverse $d$-ary cloning system on a sequence of non-trivial abelian groups $(G_n)_{n \in \mathbb{N}}$. Then $L(\mathscr{K}_d(G_*))$ is a Cartan subalgebra of $L(\mathscr{T}_d(G_*))$.
\end{theorem}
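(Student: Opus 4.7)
The plan is to mimic the structural argument from the proof of Theorem~\ref{thrm:higmanmcduff}, stopping one step earlier: the goal now is not to conclude McDuffness of $F_d$, but simply to read off from the same chain of equivalences that the abelian subalgebra $L(\mathscr{K}_d(G_*))$ is Cartan. The three ingredients to combine are (i) purity plus the abelian hypothesis to produce an abelian crossed-product decomposition, (ii) diversity plus Theorem~\ref{thrm:Irreducible} to produce ergodicity, and (iii) character rigidity of $F_d$ (via Dudko--Medynets) to upgrade ergodicity to freeness.

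First I would use purity to obtain the internal semi-direct decomposition $\mathscr{T}_d(G_*) = \mathscr{K}_d(G_*) \rtimes F_d$, where $F_d$ acts on $\mathscr{K}_d(G_*)$ by conjugation. Since $\mathscr{K}_d(G_*)$ is the directed union of the subgroups $K_T \cong G_{n(T)}$ and each $G_n$ is abelian, the group $\mathscr{K}_d(G_*)$ is abelian. Passing to group von Neumann algebras converts the semi-direct product into a crossed product, so
$$L(\mathscr{T}_d(G_*)) \;\cong\; L(\mathscr{K}_d(G_*)) \rtimes F_d,$$
and $A := L(\mathscr{K}_d(G_*))$ is an abelian, $F_d$-invariant von Neumann subalgebra with the canonical trace preserved by the induced $F_d$-action.

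Next, diversity lets me invoke Theorem~\ref{thrm:Irreducible} to conclude that $L(F_d) \subseteq L(\mathscr{T}_d(G_*))$ is irreducible. Under the identification above, this irreducibility is exactly the statement that the trace-preserving $F_d$-action on the abelian algebra $A$ is ergodic (this is the $L(G)' \cap (M \rtimes G) = M^G$ half of Citation~\ref{crossedproduct}, specialized to abelian $M = Z(M)$). Dually, identifying $A \cong L^\infty(\widehat{\mathscr{K}_d(G_*)})$ via Pontryagin duality, this is an ergodic p.m.p.\ action of $F_d$ on the standard probability space $\widehat{\mathscr{K}_d(G_*)}$, which is non-trivial because the $G_n$ are non-trivial (so $A \neq \mathbb{C}$). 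Character rigidity of $F_d$ now forces this action to be free, hence the corresponding automorphism action on $A$ is properly outer.

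Finally, applying the freeness half of Citation~\ref{crossedproduct} gives $A' \cap L(\mathscr{T}_d(G_*)) = Z(A) = A$, so $A$ is maximal abelian in $L(\mathscr{T}_d(G_*))$. Regularity is automatic and essentially built into the crossed product: every $\lambda_f$ for $f \in F_d$ lies in $\mathcal{N}_{L(\mathscr{T}_d(G_*))}(A)$ by the defining commutation relation $\lambda_f A \lambda_f^{-1} = A$, and together with $U(A)$ these unitaries generate $L(\mathscr{T}_d(G_*))$. Thus $A = L(\mathscr{K}_d(G_*))$ is a Cartan subalgebra of $L(\mathscr{T}_d(G_*))$. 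The only real content in the argument is character rigidity; the rest is a bookkeeping translation between group-dynamical and operator-algebraic language, which is why the uniformity hypothesis needed in Theorem~\ref{thrm:higmanmcduff} (used there only to secure the McDuff property via Citation~\ref{mcduff}) can be dropped here.
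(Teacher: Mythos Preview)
Your proposal is correct and follows essentially the same route as the paper's own argument: purity plus the abelian hypothesis yields the crossed-product decomposition $L(\mathscr{T}_d(G_*)) \cong L(\mathscr{K}_d(G_*)) \rtimes F_d$, diversity plus Theorem~\ref{thrm:Irreducible} gives ergodicity of the $F_d$-action on the abelian algebra, character rigidity of $F_d$ upgrades this to freeness, and the standard crossed-product dictionary then identifies $L(\mathscr{K}_d(G_*))$ as a Cartan subalgebra. You are in fact slightly more explicit than the paper in two places---you spell out why regularity is automatic from the crossed-product structure, and you note that non-triviality of the $G_n$ is what makes the action non-trivial so that the character-rigidity implication has content---but the logical skeleton is identical.
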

What makes this theorem intriguing is that it starkly contrasts with the case of $L(F_d)$, $L(T_d)$, and $L(V_d)$. Indeed, as we noted in the introduction, the Higman-Thompson group factors $L(F_d)$, $L(T_d)$, and $L(V_d)$ cannot contain a Cartan subalgebra arising from an abelian subgroup\footnote{ We note, however, that $L(\langle x_0 \rangle)$ is a singular maximal abelian subalgebra of $L(F)$ (see \cite[Lemma 3.2]{jolissaint05}), where $x_0$ is the first generator in the infinite presentation of $F$, and this ought to easily generalize to $F_d$ for all $d > 2$.}, and the same is true of the R{\"o}ver--Nekrashevych groups because any normal subgroup of a R{\"o}ver--Nekrashevych group contains its commutator subgroup (see \cite[Theorem 9.11]{nekrashevych04}). However, it is rather easy to produce Thompson-like groups arising from $d$-ary cloning systems which contain an abelian subgroup giving rise to a Cartan subalgebra. As a matter of fact, given any non-trivial abelian group $G$ and any monomorphisms $\phi_1,\dots,\phi_d : G \to G$, we know that the usual $d$-ary cloning system on the sequence of groups $(\Psi^n(G))_{n \in \mathbb{N}}$ is pure and diverse so the above implies that $L(\mathscr{K}_d(\Psi^*(G)))$ is always a Cartan subalgebra of the type $\II_1$ factor 
$L(\mathscr{T}_d(\Psi^*(G)))$. If we additionally assume that the $\phi_1,\dots,\phi_d$ are the identity, then $L(\mathscr{T}_d(\Psi^*(G)))$ is in fact a type $\II_1$ McDuff factor with at least one Cartan subalgebra for any choice of non-trivial abelian group $G$. Type $\II_1$ McDuff factors with at least one Cartan subalgebra are interesting because their Cartan subalgebras are not classifiable by countable structures in the descriptive set-theoretic sense (see \cite[Corollary G]{spaas18}). Since it is entirely possible that $F_d$ is amenable, these factors $L(\mathscr{T}_d(\Psi^*(G)))$ would merely be different manifestations of the hyperfinite type $\II_1$ factor. Hence, it would be interesting to find a non-amenable group $G$, in order to ensure that $\mathscr{T}_d(\Psi^*(G))$ is non-amenable, such that $L(\mathscr{T}_d(\Psi^*(G))$ contains at least one Cartan subalgebra

At the moment, it is unclear how to generalize Theorem~\ref{cartan}. We note that it certainly will not be true for all fully compatible $d$-ary cloning systems. To see why this is the case, recall that the Higman--Thompson group $T_d$ arises from the standard $d$-ary cloning system on $(S_n)_{n \in \mathbb{N}}$ restricted to the non-trivial abelian groups $(\langle (1~2~\cdots~n ) \rangle)_{n \in \mathbb{N}}$. This $d$-ary cloning system is fully compatible and diverse. However, as we have already emphasized, $T_d$ contains no infinite normal abelian subgroups so $L(\mathscr{K}_d(G_*))$ certainly will not be a Cartan subalgebra. Thus, it is not clear how to generalize Theorem~\ref{cartan} from pure $d$-ary to a larger class of $d$-ary cloning systems not including $T_d$, or whether it can be generalized at all. We leave it to future work to determine this as well as studying, more generally, maximal abelian subalgebras in group von Neumann algebras of Thompson-like groups arising from $d$-ary cloning systems.

\bibliographystyle{alpha}
\bibliography{bibliography}

\end{document}